\documentclass[11pt]{article}

\usepackage{fullpage}

\usepackage{authblk}
\title{So Long Sucker: Endgame Analysis\thanks{Research supported in part by NSERC. Marie Rose Jerade was partly supported by OGS.}}
\author[1]{Jean-Lou De Carufel}
\author[2]{Marie Rose Jerade}
\affil[1]{School of Electrical Engineering and Computer Science, University of Ottawa}
\affil[2]{Department of Mathematics and Statistics, University of Ottawa}

%\title{So Long Sucker: Endgame Analysis}
%\author{Jean-Lou De Carufel, Marie Rose Jerade}
%\date{March 2024}

\usepackage[T1]{fontenc}
\usepackage{tikz-cd}
\usepackage{blindtext}

\usepackage{hyperref,xcolor}
\usepackage{amsmath}
\usepackage{subcaption}
\usepackage{graphicx}
\usepackage{amsfonts}
\usepackage{amssymb}
\usepackage{amsthm}
\allowdisplaybreaks[4]
\usepackage{color}
\usepackage{mathtools}
\usepackage{enumitem}
\usepackage{epstopdf}
\usepackage{polynom}
\usepackage{babel}
\usepackage{mathrsfs}
\usepackage{chngcntr}
\usepackage{verbatim} 
\usepackage[]{mdframed}
\usepackage{listings}
\usepackage{pdflscape}
\usepackage{float}
\usepackage{rotating}
\usepackage{pdfpages}
\usepackage{xcolor}
\usepackage{diagbox}
\usepackage{soul}
\usepackage{multirow} % For multi columns and multi rows
\usepackage{fancybox,framed}

\hypersetup{
	colorlinks=true,
	linkcolor=blue,
	citecolor=magenta,
	urlcolor=cyan}

%\allowdisplaybreaks

\newtheorem{theorem}{Theorem}[section]
\newtheorem{definition}[theorem]{Definition}
\newtheorem{prop}[theorem]{Proposition}
\newtheorem{cor}[theorem]{Corollary}
\newtheorem{lemma}[theorem]{Lemma}

\newcommand{\strat}{\mathcal{S}}

\begin{document}

% Here is my personal top 3
%- [Prestigious, Very competitive, big focus on discrete math.]
% Journal of Combinatorial Theory, Series B
% ---> Michael Krivelevich

%- [Excellent reputation, Competitive, very broad topics]
% Theoretical Computer Science
% ---> Paul G. Spirakis (otherwise Spyros C. Kontogiannis)

%- [Less prestigious, focus on game theory]
% International Journal of Game Theory
% ---> Bernhard von Stengel

% Other options
%- SIAM journal on computing
%- Discrete Mathematics
%- SIAM Journal on Discrete Mathematics (SIDMA)
%- Discrete Mathematics and theoretical computer science
%- Discrete Applied math (Elsevier)
%- Electronic Journal of Combinatorics

\maketitle

\begin{abstract}
\hspace{\parindent} \emph{So Long Sucker} is a strategy board game that requires 4 players, each with $c$ chips of their designated color, and a board made of $k$ empty piles. With a clear set-up comes intricate rules, such as: players taking turns but not in a fixed order, agreements made between some players broken at any time, or a player winning the game without any chips in hand.

One of the main points of interest in studying this game is finding when a player has a winning strategy. The game begins with four players who get successively eliminated until only the winner is left. To study winning strategies, it is of interest to look at endgame situations. For that, we study the following game set-up: there are two players left in the game, Blue and Red, with only their respective chip colors. In this paper, we characterize Blue's winning scenarios and strategies for this game set-up through a delicate case analysis.
\end{abstract}

\pagenumbering{arabic}

\section{So Long Sucker: An Introduction}
\label{section.introduction}

In 1964, the game \emph{So Long Sucker} was developed by Mel Hausner, John Nash, Lloyd Shapley, and Martin Shubik \cite{SLS64}. It is a board game that requires $4$ players, each with the same number of chips of their designated color, and a board made of empty piles. So Long Sucker (\emph{SLS} henceforth) is a \emph{perfect information} board game \cite{Ross24}, that is, all players know the same information at any moment in the game. It is a \emph{deterministic} game \cite{Hodges24}, that is, a game without an instance of randomness. These two properties classify SLS as a \emph{combinatorial game} \cite{Siegel13}. Furthermore, the game is \emph{non-constant} \cite{Shubik02b}, that is, a game in which the sum of the gains and losses of the players is not fixed. Although the game is not widely known, the simple layout and detailed rules of the game make the game an ideal tool for depicting basic problems in conflict, coordination, and cooperation \cite{Shubik87}. These characteristics may remind the reader of common board games, like Risk or Diplomacy, where the players are encouraged to form coalitions and eventually break them in order to win. SLS illustrates one of the most profound game theory problems: ``the clash between individual, non-social, rational optimizing behavior, and personality and passion'' \cite{Shubik02b}. 

The game has peaked the interest of many researchers. They talk about the peculiarity of the game or draw inspiration from it. However, to the best of our knowledge, none of them have attempted to solve the game. SLS has been described ``to help form impressions
of what it feels like to be in a dog-eat-dog world'' \cite{Rapoport1971},
as ``vicious'' \cite{Poundstone1992}, ``anti-chess'' \cite{Burnett2012}, and ``fiendish'' \cite{Tannenbaum2014}. Moreover, the game inspired the first episode of filmmaker Adam Curtis's \emph{The Trap} documentary \cite{Curtis07}. The episode explores the role that game theory played during the Cold War and how mathematical models of human behavior affected economic and political decisions. Subsequently, Hofstede and Tipton studied the individual and group behavior of players while playing SLS~\cite{HofstedeTiptonMurff11}. Their observations led them to reason that the game dynamics depend on more than just the defined rules. They argue that the personalities and cultures of the participants are of crucial importance in how players act throughout the game. Moreover, Guerra-Pujol makes a parallel comparison between characters from the TV show Breaking Bad and the rules of SLS \cite{Guerra-Pujol17}. More recently, Adak and Sharan \cite{AdakSharan2024} applied classical deep reinforcement learning (DRL) to SLS. The study highlights DRL's limitations in complex social games and suggests future improvements using advanced or hybrid methods. 

As in many other games, it is of interest to establish winning strategies for the players. The rules defined in the next section show that there are many possible deviations throughout a game of SLS. This, combined with the huge number of game states, makes the game difficult to analyze. As mentioned earlier, there are a variety of papers that discuss the usefulness and potential of SLS in studying human behavior, and beyond. However, no paper studies this game from a mathematical point of view. In this paper, we are interested in studying endgame situations. In particular, we will study situations where there are two players left in the game with only their respective chip colors. We establish the necessary and sufficient conditions with which the remaining two players can guarantee to win. 

In Sections~\ref{section.setup} and~\ref{subsec:RulesGame}, we describe the rules of the game. We provide a short example of how the game plays out in Section~\ref{section.example}. Then we explain how the rules apply when only two players and two colors remain in Section~\ref{section.endgame.2.players}. Finally, in the rest of the paper, we proceed with the analysis of the endgame situation with two players and two colors.

\subsection{Setup}
\label{section.setup}

A game of SLS consists of four players, each designated by a different color, namely Blue, Red, Green and Yellow. At the beginning of the game, each player has exactly $c$ \emph{chips} of their designated color, where $c$ is a positive integer.
The game is played on a board consisting of $k$ ordered stacks of chips, where $k$ is a positive integer\footnote{In Hausner et al.'s original version, $c = 7$ and $k$ is not specified~\cite{SLS64}. Observe that in general, since there are $4c$ chips in total, fixing any $k \geq 4c$ corresponds to having an unlimited number of piles}. These stacks are referred to as \emph{piles}.
At the beginning of the game, all piles are empty, as seen in Figure~\ref{fig:EmptyBoard}.
\begin{figure}[H]
\centering
\includegraphics[scale=1]{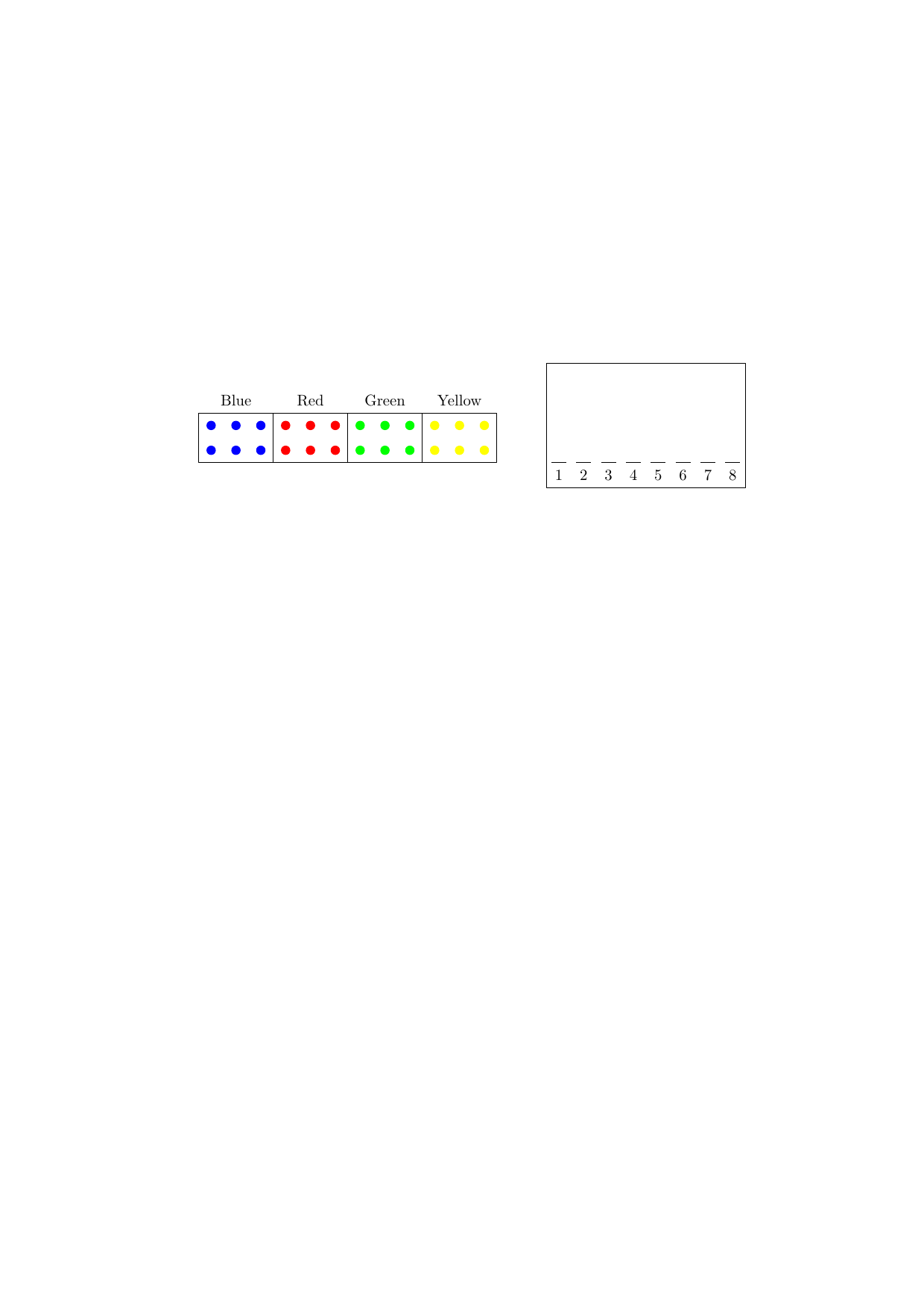}
\caption{SLS setup with $c=6$ and $k=8$. The symbol $\_$ illustrates the bottom of a pile.}
\label{fig:EmptyBoard}
\end{figure}
A cup, referred to as the \emph{dead box}, is placed near the board. 
When we say that a player \emph{discards} a chip, we mean that a player places the chip in the deadbox.
The chips in the dead box are out of the game and are completely removed from play.
Let $\mathscr{X}$ be a player with designated color $x$.
Each chip in $\mathscr{X}$'s possession is called a \emph{guard} if it is of color $x$, and a \emph{prisoner} if it is of a color different from $x$.

During the game, the player designated as the \emph{active player} must place one chip in their possession on top of a pile, referred to as the \emph{played pile}. If the active player cannot play due to a lack of chips, then they are eliminated.
The placement of the chip may result in a capture of the played pile by one of the players, as we will see in the next subsection.
Afterwards, the next active player is chosen by the current active player.

Moreover, at any moment during the game, any player (including the active player) may \emph{donate} a prisoner to another player, \emph{discard} a prisoner or \emph{make a deal} with any other players. Each of these moves is governed by a specific rule, as we will see in the next subsection. Note that deals are non-enforceable (refer to the Deal Rule), which classifies the game as \emph{non-cooperative} \cite{VanDamme2001}. The winner is the last player remaining in the game, i.e., that has not been eliminated.

\subsection{The Rules of the Game}
\label{subsec:RulesGame}

Here is the exact description of SLS. We first introduce the rules of the game as they were originally described, with four players and their respective colors. Aside from the Next Player Rule (see below), the rules are fairly straightforward. In the next section, we will see that the rules simplify once we restrict ourselves to two players and their respective colors.

\paragraph{First Player:} At the beginning of the game, the active player is chosen randomly or agreed upon among all players.

\paragraph{Active Player's Turn:}
As soon as a player is designated as the active player, their \emph{turn} starts.
Let $\mathscr{X}$ be the active player.
During their turn,
$\mathscr{X}$ has to (1) place one chip on the board and then (2) decide who is going to be the next active player.
\begin{enumerate}
\item {\bf Placing One Chip on the Board}:
\begin{enumerate}
\item If $\mathscr{X}$ starts their turn with at least one chip in their possession, then $\mathscr{X}$ must place one chip on the board, on top of a pile $\pi$.
If, after $\mathscr{X}$ places their chip on the board, $\pi$ has at least two chips and the two chips at the top of $\pi$ have the same color, say $y$,
then the player with designated color $y$ must comply with the Capture Rule (see below).

\item Otherwise, if $\mathscr{X}$ starts their turn without any chips in their possession, then they face elimination.
If another player is willing to apply the Donation Rule (see below) in order to donate a chip $c$ to $\mathscr{X}$, then $\mathscr{X}$ can place $c$ on the board and complete their turn.
Otherwise, when every player has declared their refusal to donate a chip to $\mathscr{X}$, then $\mathscr{X}$ is eliminated.
The next active player must then be chosen according to the Next Player Rule (see below).
\end{enumerate}

\item {\bf Designate the Next Active Player}:
After $\mathscr{X}$ places a chip on the board, they must designate the next active player. For that, $\mathscr{X}$ must comply with the Next Player Rule (see below). As soon as this is done,
$\mathscr{X}$'s turn ends and the next active player starts their turn.
\end{enumerate}

\paragraph{\emph{Capture Rule}:}
Let $\pi$ be a non-empty pile whose chip on top has color $y$. If the active player places a chip of color $y$ on top of $\pi$, then $\pi$ now contains one more chip and the following Capture Rule applies.
\begin{itemize}
\item  If player $\mathscr{Y}$ with designated color $y$ is eliminated, then all chips in $\pi$ must be discarded.

\item Otherwise, $\mathscr{Y}$ must \emph{capture} $\pi$.
That is, $\mathscr{Y}$ must discard exactly one chip from $\pi$ and take possession of all the remaining chips in $\pi$.
\end{itemize}

\paragraph{\emph{Next Player Rule}:}
Let $\mathscr{X}$ be the active player.

If $\mathscr{X}$ starts their turn without any chips in their possession with no other player willing to apply the Donation Rule (see below), then $\mathscr{X}$ is eliminated. The next active player must be the player $\mathscr{Y}$ who gave $\mathscr{X}$ their turn. If $\mathscr{Y}$ is already eliminated, then the \emph{Special Rule} must be applied, see below.

Otherwise, after placing a chip on top of a pile, $\mathscr{X}$ must designate the next active player based on the following conditions.
\begin{enumerate}
\item If the chip that $\mathscr{X}$ places on the board results in a capture, then the player $\mathscr{Y}$ designated by the color making the capture becomes the next active player. If $\mathscr{Y}$ is already eliminated, then the \emph{Special Rule} must be applied, see below.
\item If the chip that $\mathscr{X}$ places on the board does not lead to a capture, then $\mathscr{X}$ chooses the next active player based on the following conditions. Let $\mathcal{P}$ be the set of non-eliminated players and $\mathcal{C}$ be the set of their designated colors.
\begin{enumerate}
\item If there is at least one chip of each color in $\mathcal{C}$ in the played pile, then the move automatically goes to the player in $\mathcal{P}$ whose highest chip in the played pile is the lowest among the other chips belonging to players in $\mathcal{P}$.
\item If not all colors from $\mathcal{C}$ are represented in the played pile, then $\mathscr{X}$ can choose any player from $\mathcal{P}$ whose designated color is not represented in the played pile.
\end{enumerate}

\item \emph{Special Rule}: If an eliminated player $\mathscr{Y}$ is designated as the next active player, then the player $\mathscr{Z}$ that gave $\mathscr{Y}$ their last move becomes the active player\footnote{If $\mathscr{Z}$ is already eliminated, then the Special Rule must be applied to $\mathscr{Z}$.}.
\end{enumerate}

\paragraph{\emph{Donation Rule}:}
Let $\mathscr{Y}$ be a player. At any moment during the game, $\mathscr{Y}$ may donate a prisoner to any player that is not eliminated.
Moreover, if the active player $\mathscr{X}\neq\mathscr{Y}$ starts their turn without any chips in their possession, then $\mathscr{Y}$ is allowed to donate a prisoner to $\mathscr{X}$ to prevent $\mathscr{X}$'s elimination. 

\paragraph{\emph{Discarding Rule}:}
Let $\mathscr{Y}$ be a player. At any moment during the game, $\mathscr{Y}$ may discard a prisoner.

\paragraph{\emph{Deal Rule}:}
At any moment during the game, any number of non-eliminated players may make non-enforceable deals together, as long as the rules of the game are respected. The deal must be made and agreed upon in front of all players, remain within the current game, and be directly related to the game in play. Moreover, the deal cannot involve an eliminated player.
\emph{Here is an example of a deal: Yellow and Green can decide to work together in order to eliminate Red. At any moment, Green or Yellow can decide to break that deal.}

\paragraph{The Winner:}
The winner is the last player
remaining in the game.

\paragraph{Remarks}
All of the following statements are direct consequences of the rules we presented.
\begin{itemize}
\item It is possible for a player to possess chips that are not of their designated color. Recall that these chips are called prisoners.
\item It is possible for a player $\mathscr{X}$ to discard a guard. The only way for this to happen is for $\mathscr{X}$ to capture a pile containing a chip of color $x$.
By the Capture Rule, $\mathscr{X}$ can then discard one guard.
\item It is not possible for a player to donate a guard.
\item It is possible for a player to win the game without having
any chips left in their possession.
\item It is possible for a player to be the active player more than once in a row, as long as the active player complies with the Next Player Rule.
\item During an active player's turn, at most one capture can occur.
\end{itemize}

\subsubsection{Example}
\label{section.example}

Let us consider the following example where after a few turns, Yellow gets designated as the active player (refer to Figure~\ref{fig:example11}).
\begin{figure}[H]
\centering
\includegraphics[page=1,scale=1]{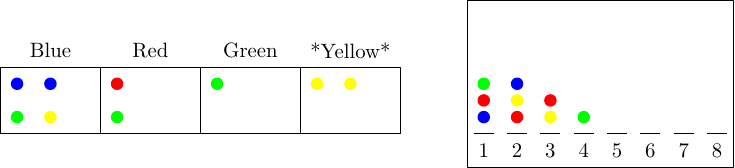}
\caption{Yellow is the active player.}
\label{fig:example11}
\end{figure}
Yellow places a yellow chip on pile $1$ after which Blue automatically becomes the active player (refer to the Next Player Rule and Figure~\ref{fig:example12}).
\begin{figure}[H]
\centering
\includegraphics[page=2,scale=1]{example1.pdf}
\caption{Blue is the active player.}
\label{fig:example12}
\end{figure}
Blue places a yellow chip on pile $4$. At the same time, Red donates a green chip to Yellow (refer to the Donation Rule). Since only a green chip and a yellow chip are on the played pile, then Blue can give the turn back to themselves or to Red (refer to the Next Player Rule). They decide to give the turn to Red, who becomes the active player (refer to Figure~\ref{fig:example13}).
\begin{figure}[H]
\centering
\includegraphics[page=3,scale=1]{example1.pdf}
\caption{Red is the active player.}
\label{fig:example13}
\end{figure}
Red places a red chip on pile $3$. As such, Red must comply with the Capture Rule. They decide to discard a red chip and take possession of the one red and one yellow chips remaining in the pile. By the Next Player Rule, they get the next turn
(refer to Figure~\ref{fig:example14}). 
\begin{figure}[H]
\centering
\includegraphics[page=4,scale=1]{example1.pdf}
\caption{Red is the active player.}
\label{fig:example14}
\end{figure}

\section{Endgame Situations with Two Players and Two Colors}
\label{section.endgame.2.players}

For the rest of this paper, we assume without loss of generality that the two players remaining in the game are Blue and Red. Moreover, we assume that every chip in the player's possession as well as on the board is blue or red.
We refer to such situations by saying that \emph{two players and two colors are left}.
Observe that each remaining player might have prisoners in their possession, and they may not have the same number of guards in their possession.
For the rest of this paper, we denote the board by $\mathbb{B}$, as well as the blue and red players by $\mathscr{B}$ and $\mathscr{R}$, respectively. Moreover, we denote the blue and red chips by $b$ and $r$, respectively.
Let $\mathscr{X} \in\{\mathscr{B}, \mathscr{R}\}$ be a player. When we say that \emph{$\mathscr{X}$ is the first player}, we mean that, at the moment that both Green and Yellow have been eliminated, and all green and yellow chips have been discarded, $\mathscr{X}$ is designated as the active player.

When two players and two colors are left, the rules of SLS are simpler to apply. For instance, we will show that, whatever move the active player makes, the Next Player Rule reduces to only one option for the next active player (refer to Theorems~\ref{thm:SameActive} and~\ref{thm:DifferentActive}).

For the rest of this paper, we refer to a pile with an $x$ chip on top as an \emph{$x$-pile}.
Moreover, we refer to piles of length at least $2$ as \emph{long piles}, and piles with exactly one chip as \emph{singletons}. A long pile with an $x$ chip on top is referred to as a \emph{long $x$-pile}, and a singleton whose unique chip is of color $x$ is referred to as an \emph{$x$-singleton}. 

\begin{theorem}[Same Active Player]
\label{thm:SameActive}
Let $\mathscr{X} \in\{\mathscr{B}, \mathscr{R}\}$ be a player with designated color $x$ and let $y\in\{b,r\}$ be a color such that $y\neq x$. If $\mathscr{X}$ is the active player and makes any of the following three moves (assuming they have the appropriate chips in their possession), then they get designated as the next active player.
\begin{enumerate}
\item $\mathscr{X}$ places a $y$ chip on an empty pile,
\item $\mathscr{X}$ places a $y$ chip on an $x$-pile,
\item or $\mathscr{X}$ places an $x$ chip on an $x$-pile.
\end{enumerate}
\end{theorem}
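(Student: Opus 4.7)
The plan is to verify each of the three cases by direct application of the Next Player Rule, using the fact that in the endgame the only non-eliminated players are $\mathscr{B}$ and $\mathscr{R}$, so $\mathcal{P}=\{\mathscr{B},\mathscr{R}\}$ and $\mathcal{C}=\{b,r\}$. The proof is thus entirely a bookkeeping argument, and the only thing to watch carefully is which sub-clause of the Next Player Rule applies in each scenario.

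\textbf{Case 1:} $\mathscr{X}$ places a $y$-chip on an empty pile. No capture occurs, so clause 2 of the Next Player Rule applies. After the move, the played pile contains a single $y$-chip, so color $x$ is not represented. By clause 2(b), $\mathscr{X}$ picks any player from $\mathcal{P}$ whose color is absent from the played pile; the only such player is $\mathscr{X}$ itself. Hence $\mathscr{X}$ is forced to designate themselves.

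\textbf{Case 2:} $\mathscr{X}$ places a $y$-chip on an $x$-pile. The top two chips of the resulting pile are a $y$ over an $x$, so no capture occurs, and clause 2 applies. After the move, both colors $x$ and $y$ appear in the played pile (the $x$ was there before and the $y$ was just added), so clause 2(a) applies. The topmost $y$-chip now sits at position $|\pi|$ while $\mathscr{X}$'s topmost $x$-chip sits at position $|\pi|-1$. Since $|\pi|-1<|\pi|$, the player whose highest chip is lowest is $\mathscr{X}$, who therefore receives the next turn.

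\textbf{Case 3:} $\mathscr{X}$ places an $x$-chip on an $x$-pile. Now the top two chips share color $x$, triggering the Capture Rule with capturing color $x$. Since $\mathscr{X}$ is not eliminated, $\mathscr{X}$ performs the capture, and by clause 1 of the Next Player Rule the capturing player becomes the next active player, namely $\mathscr{X}$.

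The main (mild) obstacle is simply making sure to invoke the correct sub-clause of the Next Player Rule in each case: clause 2(b) forces the self-designation in Case 1 precisely because there are only two colors in play, clause 2(a) resolves Case 2 because both colors now occur on the pile with $\mathscr{X}$'s topmost chip strictly lower, and clause 1 handles Case 3 via the Capture Rule. No further hypotheses on the state of $\mathbb{B}$ or on prisoners held by either player are needed.
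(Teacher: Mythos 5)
Your proof is correct and follows essentially the same route as the paper's: each case is resolved by identifying the relevant clause of the Next Player Rule (2(b) for the empty pile, 2(a) for the $y$-on-$x$ placement, and the capture clause for the $x$-on-$x$ placement). Your write-up is in fact slightly more explicit than the paper's about which sub-clause applies and why the position comparison in Case 2 forces $\mathscr{X}$ to receive the turn.
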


\begin{proof}
Suppose $\mathscr{X}$ is the active player.
\begin{enumerate}
\item If $\mathscr{X}$ places a $y$ chip on an empty pile,
then $\mathscr{X}$ is the only player remaining in the game whose chip color is not in the played pile. Thus, $\mathscr{X}$ gets the next turn by the the Next Player Rule.

\item If $\mathscr{X}$ places a $y$ chip on an $x$-pile, then $\mathscr{X}$ is the player whose chip color is furthest down in the pile. Therefore, $\mathscr{X}$ gets the next turn by the Next Player Rule.

\item If $\mathscr{X}$ places an $x$ chip on an $x$-pile,
then $\mathscr{X}$ must comply with the Capture Rule and then gets the next turn by the Next Player Rule.
\end{enumerate}
\end{proof}

\begin{theorem}[Different Active Player]
\label{thm:DifferentActive}
Let $\mathscr{X},\mathscr{Y} \in\{\mathscr{B}, \mathscr{R}\}$ be two different players with designated colors $x$ and $y$, respectively. If $\mathscr{X}$ is the active player and makes any of the following three moves (assuming they have the appropriate chips in their possession), then $\mathscr{Y}$ gets designated as the next active player.
\begin{enumerate}
\item $\mathscr{X}$ places an $x$ chip on an empty pile,
\item $\mathscr{X}$ places an $x$ chip on a $y$-pile,
\item or $\mathscr{X}$ places a $y$ chip on a $y$-pile.
\end{enumerate}
\end{theorem}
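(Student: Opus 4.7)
The plan is to mirror the three-case analysis used in the proof of Theorem~\ref{thm:SameActive}, but with the colors and players rearranged so that each of the listed moves forces the Next Player Rule to hand the turn to $\mathscr{Y}$ rather than back to $\mathscr{X}$. Since only two players remain, the set $\mathcal{P}$ of non-eliminated players is $\{\mathscr{X},\mathscr{Y}\}$ and the set $\mathcal{C}$ of their designated colors is $\{x,y\}$, which collapses the Next Player Rule to essentially one choice per sub-case.

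For move 1, where $\mathscr{X}$ plays an $x$ chip on an empty pile, no capture occurs and the played pile contains only an $x$ chip, so the color $y$ is absent from the played pile; case 2(b) of the Next Player Rule then forces the turn to go to the unique player in $\mathcal{P}$ whose color is missing, namely $\mathscr{Y}$. For move 2, where $\mathscr{X}$ plays an $x$ chip on a $y$-pile, again no capture occurs, and both colors are now represented in the played pile (the new $x$ sits immediately above the previous top chip, which is of color $y$). Case 2(a) therefore applies, and since $\mathscr{X}$'s highest chip in the pile is strictly above $\mathscr{Y}$'s highest chip, the rule hands the turn to $\mathscr{Y}$. For move 3, where $\mathscr{X}$ plays a $y$ chip on a $y$-pile, the top two chips now share color $y$, triggering the Capture Rule; since $\mathscr{Y}$ is not eliminated, $\mathscr{Y}$ must capture, and case 1 of the Next Player Rule then designates $\mathscr{Y}$ as the next active player.

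I do not foresee any real obstacle; the argument is essentially dual to that of Theorem~\ref{thm:SameActive} and amounts to a direct application of the Next Player Rule in each of the three sub-cases. The only step that warrants explicit care is move 2, where I must verify both that the played pile genuinely contains chips of both colors (so that case 2(a), and not 2(b), applies) and that the ordering comparison between the two players' highest chips in the pile comes out in $\mathscr{Y}$'s favor. Once those two points are observed, the three cases fall out immediately.
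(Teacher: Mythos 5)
Your proposal is correct and follows essentially the same route as the paper's proof: a direct case-by-case application of the Next Player Rule, with move 3 routed through the Capture Rule. Your extra care in move 2 (checking that both colors appear in the played pile and that $\mathscr{Y}$'s highest chip is the lower one) is just a more explicit rendering of the paper's observation that $\mathscr{Y}$ is the player whose color is furthest down in the pile.
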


\begin{proof}
Suppose $\mathscr{X}$ is the active player.
\begin{enumerate}
\item If $\mathscr{X}$ places an $x$ chip on an empty pile, then $\mathscr{Y}$ is the only player remaining in the game whose chip color is not in the played pile. Thus, $\mathscr{Y}$ gets the next turn by the the Next Player Rule.

\item If $\mathscr{X}$ places an $x$ chip on a $y$-pile,
then $\mathscr{Y}$ is the player whose chip color is furthest down in the pile. Therefore, $\mathscr{Y}$ gets the next turn by the Next Player Rule.

\item If $\mathscr{X}$ places a $y$ chip on a $y$-pile, then $\mathscr{Y}$ must comply with the Capture Rule and then gets the next turn by the Next Player Rule.
\end{enumerate}
\end{proof}

The following proposition implies that, when only two players and two colors are left, the chips in every pile with at least two chips have alternating colors.
\begin{prop}
\label{prop:AlternatingPile}
Let $\pi$ be a pile containing at least two chips. At the beginning of each turn, any two consecutive chips in $\pi$ have different colors.
\end{prop}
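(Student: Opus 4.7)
The plan is to prove the proposition by induction on the number of turns that have elapsed in the two-player endgame. The only events that modify the contents of any pile are chip placements (and the possible resulting captures) made during the active player's turn: donations, discards, and deals merely shuffle chips between a player's hand and the dead box, or are simply verbal agreements. Hence the state of the board at the beginning of turn $n+1$ coincides with its state at the end of turn $n$, so it is enough to show that if every pile of length at least two alternates at the start of some turn, then every such pile still alternates at the start of the next turn.

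For the inductive step, fix a pile $\pi$ and assume that every pile of length at least two is alternating. Unplayed piles are unchanged, so the only pile we need to analyse is the one on which the active player places a chip. If $\pi$ was empty before the placement, it now contains a single chip and the property holds vacuously. If $\pi$ had top color $x$ and the placed chip has color $z\neq x$, then the two topmost chips differ, every other consecutive pair was already alternating by hypothesis, and $\pi$ still alternates. If the placed chip has color $z=x$, then the Capture Rule triggers; since only Blue and Red remain, the player of color $x$ is still in the game and must discard one chip from $\pi$ and take all the remaining chips, so $\pi$ becomes empty and the property is vacuously true. In all three cases the invariant propagates to the next turn.

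The base case amounts to verifying that at the very first turn of the endgame every long pile already alternates, and this is the main obstacle, since it requires arguing that the invariant has actually held throughout the entire four-color phase as well. The key observation is that chips inside a pile are never moved, rearranged, or removed individually: they are only added on top, and the only way two equal-color consecutive chips could ever arise is by a placement at the top of a pile whose current top has the same color. But such a placement forces an immediate capture (or a discard of the whole pile if the matching player has been eliminated), which empties $\pi$ before the turn is over. Consequently, no two equal-color consecutive chips can ever persist between turns, and the endgame inherits the alternation invariant from the four-color phase, giving the base case and completing the induction.
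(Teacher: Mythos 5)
Your proof is correct and rests on the same key observation as the paper's: by the Capture Rule, a same-color adjacency can only arise from a placement on a matching top chip, and such a placement immediately triggers a capture (or discard of the whole pile), so the adjacency never survives to the start of the next turn. The paper states this in two sentences; your inductive scaffolding and the explicit treatment of the four-color phase just make rigorous what the paper leaves implicit.
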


\begin{proof}
By the Capture Rule, as soon as a pile has two consecutive chips of the same color, this results in a capture. Therefore, at the beginning of each turn,
any two consecutive chips in $\pi$ have different colors.
\end{proof}

\begin{prop}
\label{prop:XcapturesXpile}
Let $\mathscr{X} \in \{\mathscr{B},\mathscr{R}\}$ be a player with designated color $x$.
Assume $\mathscr{X}$ is the active player and places an $x$ chip on top of an $x$-pile. Then the number of $x$ chips in $\mathscr{X}$'s possession would not have decreased by the end of their turn.
\end{prop}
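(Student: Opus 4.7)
The plan is to localize the only way $\mathscr{X}$ can part with an $x$ chip during this turn and to show that the forced capture immediately replenishes what $\mathscr{X}$ expends. First I would invoke the Capture Rule: placing an $x$ chip on top of an $x$-pile $\pi$ creates two consecutive $x$ chips on top of $\pi$, so $\mathscr{X}$ itself must capture $\pi$. In particular, just after the placement, $\pi$ contains at least two chips of color $x$, namely the chip just placed and the chip that was on top of $\pi$ before the placement.

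Next I would do the accounting on the capture. The Capture Rule requires $\mathscr{X}$ to discard exactly one chip from $\pi$ and to take possession of all the others. Whichever chip $\mathscr{X}$ chooses to discard, at least one $x$ chip remains in $\pi$ and thus enters $\mathscr{X}$'s possession, which offsets the $x$ chip spent in the placement. Consequently, the combined effect on $\mathscr{X}$'s $x$ chip count of the placement together with the forced capture is nonnegative.

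Finally I would rule out any other source of loss of $x$ chips during the turn. By the Donation Rule and the Discarding Rule, only prisoners (chips whose color differs from the holder's color) may be donated or discarded, so no $x$ chip held by $\mathscr{X}$ can leave $\mathscr{X}$'s possession via these rules; the Deal Rule introduces no new mechanism for moving chips, and designating the next active player does not affect chip counts. I do not foresee a real obstacle here: the argument reduces to checking that the forced capture returns enough $x$ chips, combined with the observation that no other rule ever permits $\mathscr{X}$ to part with a guard.
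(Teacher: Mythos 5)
Your proof is correct and follows essentially the same approach as the paper: account for the forced capture and observe that the recovered $x$ chips offset the one placed. Your version is in fact slightly cleaner, since noting that the pile contains at least two $x$ chips after the placement lets you avoid the paper's case split on whether $\pi$ is a singleton and on which color is discarded (and hence avoids invoking the alternating-colors proposition); your explicit check that guards cannot leave via the Donation or Discarding Rules is a point the paper leaves implicit.
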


\begin{proof}
Suppose $\mathscr{X}$ is the active player and places an $x$ chip on an $x$-pile $\pi$. Then $\mathscr{X}$ must comply with the Capture Rule.

If $\pi$ is an $x$-singleton, then $\mathscr{X}$ discards one $x$ chip from $\pi$ and keeps the other $x$ chip in their possession.
If no other player donates an $x$ prisoner to $\mathscr{X}$ (with respect to the Donation Rule), then the number of $x$ chips in $\mathscr{X}$'s possession would not have changed by the end of their turn.
Otherwise, the number of $x$ chips in $\mathscr{X}$'s possession would have increased by the end of their turn.

If $\pi$ is not an $x$-singleton, then it contains at least two chips. By Proposition~\ref{prop:AlternatingPile},
$\pi$ contains at least one $y$ chip, where $y\neq x$.
As such, $\mathscr{X}$ discards (1) an $x$ chip or (2) a $y$ chip from $\pi$.
\begin{enumerate}
\item Consider the case where $\mathscr{X}$ discards an $x$ chip.
If no other player donated an $x$ prisoner to $\mathscr{X}$ (with respect to the Donation Rule), then the number of $x$ chips in $\mathscr{X}$'s possession would not have changed by the end of their turn.
Otherwise, the number of $x$ chips in $\mathscr{X}$'s possession would have increased by the end of their turn.
\item Consider the case where $\mathscr{X}$ discards a $y$ chip. Then $\mathscr{X}$ has at least one extra $x$ chip in their possession at the end of their turn.
\end{enumerate}
\end{proof}

As we observed in Subsection~\ref{subsec:RulesGame}, a player can be the active player more than once in a row.
We define a \emph{round} as a sequence of turns during which the active player does not change.

\begin{definition}[Active Player's Round]
Let $\mathscr{X},\mathscr{Y} \in\{\mathscr{B}, \mathscr{R}\}$ be two different players.
When $\mathscr{Y}$ gives the turn to $\mathscr{X}$, we say that $\mathscr{X}$'s \emph{round} \emph{starts}.
Then, when $\mathscr{X}$ gives the turn back to $\mathscr{Y}$, we say that $\mathscr{X}$'s round \emph{ends}.
\end{definition}
When $\mathscr{X}$'s round starts, $\mathscr{X}$ first makes a (possibly empty) sequence of moves from the ones listed in Theorem~\ref{thm:SameActive}.
Then $\mathscr{X}$ ends their round either by being eliminated or by making one move from the ones listed in Theorem~\ref{thm:DifferentActive}.
Observe that during $\mathscr{X}$'s round, both players can donate or discard prisoners, and they can make a deal (refer to the Donation, the Discarding and the Deal Rules).

We define the \emph{length} of a pile $\pi$, denoted by $|\pi|$, as the number of chips contained in $\pi$. Similarly, we denote by $|\pi|_x$ the number of $x$ chips in $\pi$, where $x\in \{b,r\}$.
Let $\lambda \geq 1$ be an integer and let $x_1,x_2,...,x_{\lambda} \in \{b,r\}$. Consider an $x_{\lambda}$-pile $\pi$ of length $\lambda$, such that the color sequence of chips from bottom to top is $(x_1,x_2,...,x_{\lambda})$.
We sometimes refer to the pile $\pi$ as $(x_1,x_2,...,x_{\lambda})$ or as an $(x_1, x_2, ..., x_{\lambda})$-pile. If $\pi$ and $\pi'$ are two sequences, then $\langle \pi,\pi'\rangle$ corresponds to the concatenation of $\pi$ and $\pi'$. As such, if an $x$ chip is placed on top of $\pi$, we denote the pile we obtain by $\langle \pi, (x)\rangle$. 

\begin{definition}[Strategy $\strat$]
Let $\mathscr{X} \in\{\mathscr{B}, \mathscr{R}\}$ be a player with designated color $x$ and let $y\in\{b,r\}$ be a color such that $y\neq x$.
Assume $\mathscr{X}$ is the active player and has at least one $x$ chip in their possession.
Let $\strat$ be the strategy defined by the following sequence of moves.
\begin{enumerate}
\item Player $\mathscr{X}$ captures all $x$-piles. For each captured pile $\pi$, when applying the Capture Rule, if there is a $y$ chip in $\pi$, then $\mathscr{X}$ discards a $y$ chip. Otherwise, $\mathscr{X}$ discards an $x$ chip.
\item Player $\mathscr{X}$ discards all of their prisoners, i.e., all the $y$ chips in their possession.
\item If there is at least one $y$-pile on the board, then $\mathscr{X}$ places an $x$ chip on the $y$-pile of greatest length. Otherwise, $\mathscr{X}$ places an $x$ chip on an empty pile.
\end{enumerate}
\end{definition}
Observe that when a player applies $\strat$, they do not make deals with their opponent and they do not donate any chips to their opponent.
The following lemma implies that $\strat$ is well-defined.
\begin{lemma}
Let $\mathscr{X},\mathscr{Y} \in\{\mathscr{B}, \mathscr{R}\}$ be two different players with designated color $x$ and $y$, respectively.
Assume $\mathscr{X}$ is the active player and has at least one $x$ chip in their possession.
Then $\mathscr{X}$ can apply $\strat$,
after which $\mathscr{Y}$ becomes the active player.
\end{lemma}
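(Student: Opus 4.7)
The plan is to verify that the three steps in the definition of $\strat$ are executable in the prescribed order, and that the final chip placement triggers the Next Player Rule in favor of $\mathscr{Y}$. The invariants I want to maintain throughout are that $\mathscr{X}$ remains the active player between successive moves inside the round, that $\mathscr{X}$ retains at least one $x$ chip whenever a chip is about to be placed, and that step 3 has a valid target available on the board.

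For step 1, I would proceed by induction on the number of $x$-piles on the board. The base case uses the hypothesis that $\mathscr{X}$ begins the round with at least one $x$ chip. For the inductive step, placing an $x$ chip on an $x$-pile triggers a capture; by Theorem~\ref{thm:SameActive} item 3 the turn stays with $\mathscr{X}$, and by Proposition~\ref{prop:XcapturesXpile} the number of $x$ chips in $\mathscr{X}$'s possession does not decrease, so $\mathscr{X}$ is ready to capture the next $x$-pile. Since the captured pile becomes empty and no new $x$-pile is created along the way, the count of $x$-piles strictly decreases each iteration and step 1 terminates in finitely many moves. Once it is done, the board consists only of $y$-piles and empty piles, and $\mathscr{X}$ still holds at least one $x$ chip.

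Step 2 is unconditionally permitted by the Discarding Rule and only removes $y$ prisoners from $\mathscr{X}$'s hand, leaving the $x$-chip count unaffected. For step 3, since $k \geq 1$ and the board contains only $y$-piles and empty piles, $\mathscr{X}$ plays the available $x$ chip either on the longest $y$-pile (if one exists) or else on an empty pile; these situations fall under items 2 and 1 of Theorem~\ref{thm:DifferentActive}, so $\mathscr{Y}$ is designated as the next active player and $\mathscr{X}$'s round ends. The only mildly delicate point is ensuring that $\mathscr{X}$'s $x$-chip count does not drop to zero during the cascade of captures in step 1, which is exactly the content of Proposition~\ref{prop:XcapturesXpile}. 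Any auxiliary actions that $\mathscr{Y}$ might take during $\mathscr{X}$'s round (such as discarding, or donating $x$ prisoners to $\mathscr{X}$) can only preserve or strengthen these invariants, so they do not obstruct the execution of $\strat$.
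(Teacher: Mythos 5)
Your proposal is correct and follows essentially the same route as the paper's proof: capture all $x$-piles using Proposition~\ref{prop:XcapturesXpile} to preserve the $x$-chip invariant and Theorem~\ref{thm:SameActive} to retain the turn, then discard prisoners, then invoke Theorem~\ref{thm:DifferentActive} for the final placement. Your added remarks on termination and on $\mathscr{Y}$'s possible interleaved actions only make explicit what the paper leaves implicit.
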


\begin{proof}
Suppose $\mathscr{X}$ is the active player and has at least one $x$ chip in their possession.
\begin{enumerate}
\item If there is an $x$-pile $\pi$ on the board,
then $\mathscr{X}$ can capture it (refer to the Capture Rule).
After capturing $\pi$, $\mathscr{X}$ has at least one $x$ chip in their possession by Proposition~\ref{prop:XcapturesXpile}.
Moreover, by Theorem~\ref{thm:SameActive}, $\mathscr{X}$ is again the active player.
Therefore, they can keep capturing $x$-piles as long as there are
some on the board. Afterwards, $\mathscr{X}$ has at least one $x$ chip in their possession and $\mathscr{X}$ is again the active player.
\item Player $\mathscr{X}$ is allowed to discard all the $y$ chips in their possession (refer to the Discarding Rule).
\item When there are no more $x$-piles on the board,
$\mathscr{X}$ can proceed with Step 3 of $\strat$. Afterwards, by Theorem~\ref{thm:DifferentActive}, $\mathscr{Y}$ becomes the active player.
\end{enumerate}
\end{proof}

For the rest of this paper, the \emph{state of the board} refers to the number of empty piles, singletons and long piles there are on the board at a given moment of the game. More specifically, we denote by $\mathbb{B}=(k_e,k_r,k_b,\ell,h)$ the state where there are $k_e$ empty piles, $k_r$ $r$-singletons, $k_b$ $b$ singletons, $\ell$ long $r$-piles, and $h$ long $b$-piles on the board. The \emph{state of a player} refers to the number of $b$ chips and $r$ chips in their possession at a given moment of the game. We denote by $\mathscr{B}=(m_b,m_r)$ the state where $\mathscr{B}$ has $m_b$ $b$ chips and $m_r$ $r$ chips in their possession. Similarly, we denote by $\mathscr{R}=(n_b,n_r)$ the state where $\mathscr{R}$ has $n_b$ $b$ chips and $n_r$ $r$ chips in their possession. Finally, the \emph{state of the game (game state)} refers to the combination of the states of $\mathbb{B}$, $\mathscr{B}$ and $\mathscr{R}$ at a given moment of the game.

In this paper, we characterize the game states where there exists a winning strategy for the first player.
Moreover, we show that when there exists a winning strategy for a player $\mathscr{X}$, then $\strat$ is a winning strategy for $\mathscr{X}$.
In Section~\ref{sec:GTI}, we consider the game states where the board contains no long $b$-piles. We refer to these states as \emph{type I} and \emph{generalized type I} game states.
In Section~\ref{sec:GTII}, we consider the game states where the board contains at least one long $b$-pile and exactly one long $r$-pile. We refer to these states as \emph{type II} and \emph{generalized type II} game states.
In Section~\ref{sec:FinalTheorem}, we consider the general case.
All these cases are summarized in Table~\ref{table:summary.cases}.

\begin{table}[H]
\centering
\begin{tabular}{|l||c|c|c|c|c|}
\hline
\diagbox{type}{number of} & $r$-singletons & $b$-singletons & long $r$-piles & long $b$-piles & Section \\ \hline\hline
    type I & $\geq 0$ & $\geq 0$ & $\leq 1$ & $0$ & \ref{subsec:typeI}\\\hline
generalized type I & $\geq 0$ & $\geq 0$ & $\geq 0$ & $0$ & \ref{subsec:gentypeI} \\\hline
type II & $\geq 0$ & $\geq 0$ & $1$ & $1$ & \ref{subsec:typeII}\\\hline
generalized type II & $\geq 0$ & $\geq 0$ & $1$ & $\geq 1$ & \ref{subsec:gentypeII} \\\hline
general & $\geq 0$ & $\geq 0$ & $\geq 0$ & $\geq 0$ & \ref{sec:FinalTheorem} \\\hline
\end{tabular}
\caption{Breakdown of the endgame analysis with two players and two colors.\label{table:summary.cases}}
\end{table}

The goal of this paper is to prove the following theorem about the two players and two colors game state. 

\begin{framed}
Let $\mathbb{B}=(k_e,k_r,k_b,\ell,h)$. If $\mathbb{B}$ contains $h\geq 1$ long $b$-piles, we denote the $h$ long $b$-piles by $\beta_{1}, \beta_{2}, \ldots, \beta_{h}$. If $\mathbb{B}$ contains $\ell\geq 1$ long $r$-piles, we denote the $\ell$ long $r$-piles by $\rho_{1}, \rho_{2}, \ldots, \rho_{\ell}$. Consider the players $\mathscr{B}=\left(m_b, m_r\right)$, $\mathscr{R}=\left(n_b, n_r\right)$, and assume that $\mathscr{B}$ is the active player. Then, $\mathscr{B}$ has a winning strategy if and only if 
$$m_b>0 \qquad \textrm{and} \qquad \left( n_r=0 \quad\textrm{ or }\quad m_b+\sum_{i=1}^{h}\left|\beta_{i}\right|_b>n_r+\sum_{i=1}^{\ell}\left|\rho_{i}\right|_r-\max_{1\leq i\leq \ell}\big\{\left|\rho_{i}\right|_r\big\}\right)$$
when $\mathscr{B}$'s turn starts. Whenever there is a winning strategy for $\mathscr{B}$ (respectively for $\mathscr{R}$), then $\strat$ is such a strategy.
\end{framed}

\section{Boards Without Long $b$-Piles}
\label{sec:GTI}

In this section, we analyze game states with two players and two colors when the board does not contain any long $b$-piles.

\subsection{Type I Board}
\label{subsec:typeI}

We start by studying game states where the board does not contain any long $b$-piles and contains at most one long $r$-pile.
\begin{definition}
\label{def:typeI}
The board $\mathbb{B}$ is said to be of \emph{type I} if $\mathbb{B}=(k_e,k_r,k_b,\ell,0)$ with $\ell\in\{0,1\}$ (refer to Figure~\ref{fig:type_i}).
\begin{figure}[H]
     \centering
     \begin{subfigure}[h]{0.3\textwidth}
         \centering
         \includegraphics[scale=1]{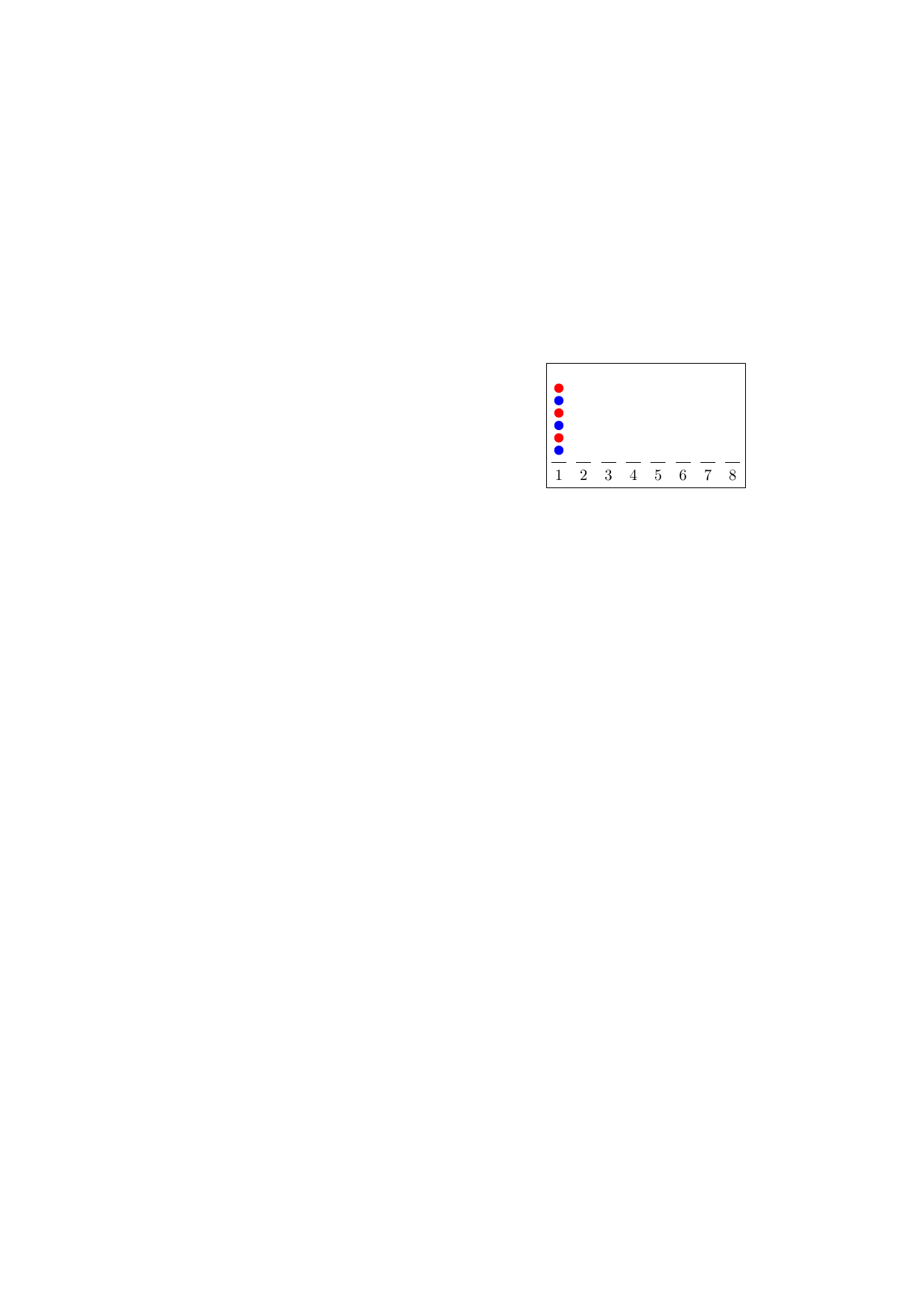}
         \label{fig:type_i_1}
     \end{subfigure}
     \hfill
     \begin{subfigure}[h]{0.3\textwidth}
         \centering
         \includegraphics[scale=1]{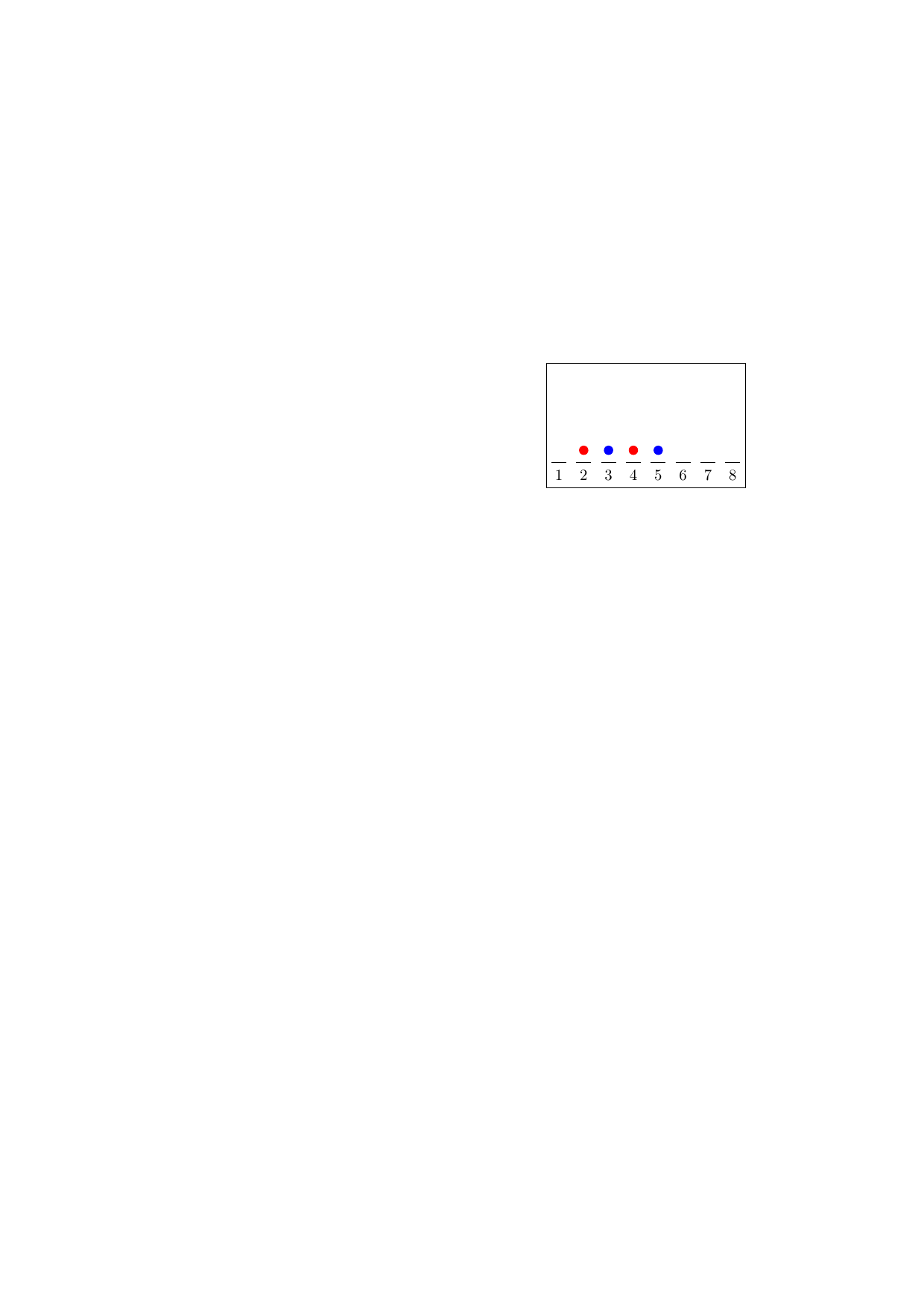}
         \label{fig:type_i_2}
     \end{subfigure}
     \hfill
     \begin{subfigure}[h]{0.3\textwidth}
         \centering
         \includegraphics[scale=1]{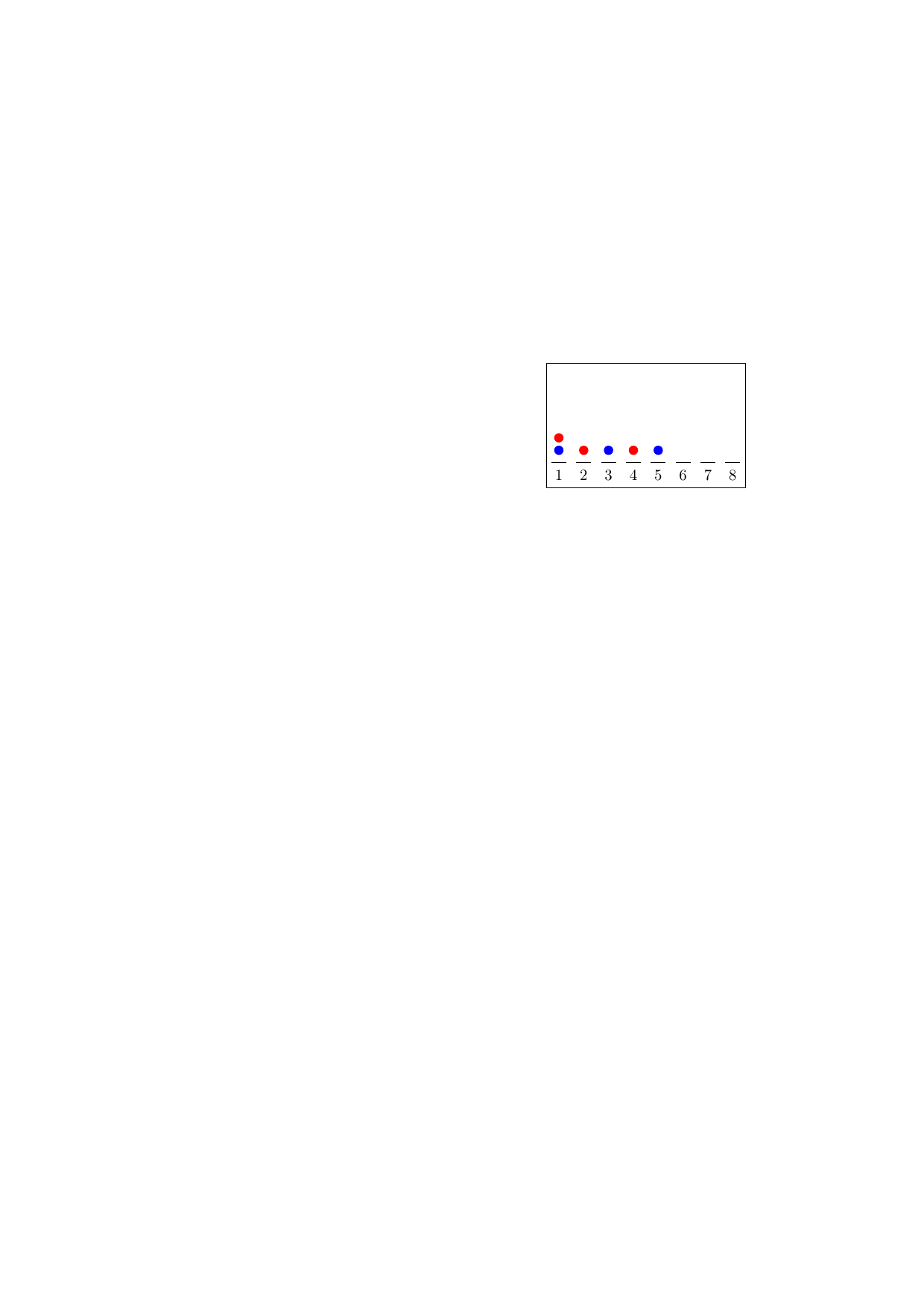}
         \label{fig:type_i_3}
     \end{subfigure}
        \caption{Examples of type I boards with $8$ piles.}
        \label{fig:type_i}
\end{figure}
\end{definition}

In all the proofs in this paper, because of the Donation and Discarding Rules,
we need to be careful when stating the number of chips in a player's possession.
For instance,
suppose $\mathscr{B} = (m_b,0)$ (with $m_b \geq 1$).
After playing a $b$ chip, if we do not know whether $\mathscr{R}$ applied the Donation Rule during $\mathscr{B}$'s round, then we can only say that $\mathscr{B}$ has \emph{at least} $m_b-1$ $b$ chips.
In such a case, we write $\mathscr{B} = (m_b',0)$, where $m_b' \geq m_b-1$.

In this subsection, we first describe the states where there is a winning strategy for $\mathscr{B}$ on a type I board (refer to Proposition~\ref{prop-typeI-B}). Then we describe the states where there is a winning strategy for $\mathscr{R}$ on a type I board (refer to Proposition~\ref{prop-typeI-R}). We finally present the characterization of the winning states for the first player in Theorem~\ref{thm-type1}.

\begin{prop}
\label{prop-typeI-B}
Let $\mathbb{B}=(k_e,k_r,k_b,\ell,0)$ be of type I, $\mathscr{B}=\left(m_b, m_r\right)$,  $\mathscr{R}=\left(n_b, n_r\right)$, and assume that $\mathscr{B}$ is the active player. If $m_b>n_r \geq 0$ when $\mathscr{B}$'s turn starts, then $\mathscr{B}$ wins by applying $\strat$ at every round.
\end{prop}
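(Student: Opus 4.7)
The plan is to maintain two invariants at the start of each of $\mathscr{B}$'s rounds: (i) $m_b>n_r\geq 0$, and (ii) the board contains at most one long $r$-pile. Both hold initially, since the hypothesis provides (i) and the type~I assumption gives $\ell\leq 1$. Invariant (i) ensures $m_b\geq 1$, so by the lemma showing $\strat$ is well-defined, $\mathscr{B}$ can always apply $\strat$. The induction measure is $n_b+n_r$, the size of $\mathscr{R}$'s hand; once it reaches $0$, $\mathscr{R}$ cannot play, $\mathscr{B}$ refuses to donate (as $\strat$ dictates), and so by the Donation Rule $\mathscr{R}$ is eliminated, giving the win to $\mathscr{B}$.

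For the inductive step, I would first trace $\mathscr{B}$'s round. Step~1 of $\strat$ captures each $b$-pile; a short bookkeeping calculation (using the $\strat$-prescribed choice to discard a $y$-chip if present) shows that $m_b$ is unchanged after capturing a $b$-singleton and strictly increases after capturing a long $b$-pile. Step~2 does not change $m_b$. Step~3 spends exactly one $b$-chip. Hence $m_b'\geq m_b-1$, while $n_r$ is unchanged throughout the round because $\mathscr{B}$ cannot give $r$-chips to $\mathscr{R}$ (guards cannot be donated). Moreover, Step~3 plays a $b$ on top of the longest $r$-pile, which by (ii) is the unique long $r$-pile if one existed, so after $\mathscr{B}$'s round there are no long $r$-piles on the board.

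Next I would analyze $\mathscr{R}$'s round, which is a (possibly empty) sequence of Theorem~\ref{thm:SameActive}-type sub-moves closed off by one Theorem~\ref{thm:DifferentActive}-type end move. The crucial observation is that since no long $r$-pile exists at the start of $\mathscr{R}$'s round, and none of $\mathscr{R}$'s Theorem~\ref{thm:SameActive}-type options ($b$ on empty, $b$ on $r$-pile, or $r$ on $r$-pile) can create a long $r$-pile, the only sub-move captures available to $\mathscr{R}$ are captures of $r$-singletons, which keep $n_r$ constant. The quantity $m_b$ is also constant across the sub-moves because the only mid-round event that can change it is $\mathscr{B}$'s forced capture on a $b$-on-$b$-pile move, and that is an end move. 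A three-way case split on the end move then closes the invariant check: if $\mathscr{R}$ plays $r$ on an empty pile or on a long $b$-pile, then $n_r$ drops by one while $m_b$ is unchanged, yielding $m_b''-n_r''\geq(m_b-1)-(n_r-1)=m_b-n_r>0$; if $\mathscr{R}$ plays $b$ on a $b$-pile, $\mathscr{B}$'s forced capture returns at least one $b$-chip, so $m_b''\geq m_b$ and $n_r''=n_r$. In every case (i) is preserved, and (ii) is preserved because the end move creates at most one long $r$-pile (only in the ``$r$ on long $b$-pile'' case). Termination follows because every end move decreases $n_b+n_r$ by one, so the induction measure strictly drops per round.

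The main obstacle will be the bookkeeping inside $\mathscr{R}$'s round, where $\mathscr{R}$ can chain together arbitrarily many sub-moves before committing to an end move. The subtle point is to rule out any way for an $r$-chip to re-enter $\mathscr{R}$'s hand via a capture, which would break the inequality $m_b''-n_r''\geq m_b-n_r$; this is precisely where invariant (ii), propagated through $\mathscr{B}$'s round, earns its keep.
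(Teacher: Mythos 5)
Your proof is correct and follows essentially the same route as the paper's: induction on $n_b+n_r$, with $\mathscr{B}$ applying $\strat$ each round and a case analysis of $\mathscr{R}$'s sub-moves and end move showing that $m_b>n_r$ and the ``at most one long $r$-pile'' board shape are restored with a strictly smaller measure. The only quibbles are cosmetic: $m_b$ is non-decreasing rather than constant during $\mathscr{R}$'s sub-moves (since $\mathscr{R}$ may donate $b$ prisoners to $\mathscr{B}$), and the omitted end move ``$r$ on a $b$-singleton'' also creates a long $r$-pile — neither affects the inequalities.
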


In the following proof and for the rest of this paper, we assume that an empty sum is equal to $0$, for instance $\sum_{i = 2}^1 i = 0$.
We prove Proposition~\ref{prop-typeI-B} using the following inductive argument on $n_b+n_r$. 
\begin{description}
\item[Base case:] We prove that for all game states where $\mathbb{B}$ is of type I, $\mathscr{B}=\left(m_b, m_r\right)$ with $m_b > 0$, and $\mathscr{R}=\left(0, 0\right)$,
if $\mathscr{B}$ is the active player, then $\mathscr{B}$ wins by applying $\strat$.
\item[Inductive step:] Consider a game state where $\mathbb{B}=(k_e,k_r,k_b,\ell,0)$ is of type I, $\mathscr{B}=\left(m_b, m_r\right)$, and $\mathscr{R}=\left(n_b, n_r\right)$, such that $m_b > n_r \geq 0$ and $n_b+n_r > 0$.
Assume $\mathscr{B}$ is the active player and they apply $\strat$.
Then, no matter what $\mathscr{R}$ plays during their round, when $\mathscr{R}$'s round is over, we get $\mathbb{B}=(k_e'',k_r'',k_b'',\ell'',0)$, $\mathscr{B}=\left(m_b'', m_r''\right)$ and  $\mathscr{R}=\left(n_b'', n_r''\right)$, where $\ell''\in\{0,1\}$, $\mathscr{B}$ is the active player, $m_b'' > n_r'' \geq 0$, and $n_b''+n_r'' < n_b+n_r$.
\end{description}

\begin{proof}
Suppose $\mathbb{B}=(k_e,k_r,k_b,\ell,0)$ is of type I.

Consider a game state where $n_b+n_r = 0$ (base case). Therefore $n_b=n_r=0$, which means $\mathscr{R}$ has no chips and $\mathscr{B}$ has at least one $b$ chip since $m_b>n_r$. Player $\mathscr{B}$ applies Strategy $\strat$, after which the move goes to $\mathscr{R}$ who has no chips. Thus, $\mathscr{R}$ is eliminated and $\mathscr{B}$ wins.

Consider now a game state where $\mathscr{B}=\left(m_b, m_r\right)$
and $\mathscr{R}=\left(n_b, n_r\right)$ with $m_b > n_r \geq 0$ and $n_b+n_r > 0$.
Since $m_b>n_r \geq 0$, then $\mathscr{B}$ has at least one $b$ chip. Player $\mathscr{B}$ applies Strategy $\strat$: 
they capture $k_b$ $b$-singletons and
discard all their prisoners as per steps 1 and 2 of $\strat$.
Then, as per Step 3 of $\strat$, they place a $b$ chip on the only long $r$-pile if $\ell =1$, on an $r$-singleton if $\ell =0$ and $k_r>0$, or on an empty pile otherwise. In all three cases, we have $\mathscr{B}=\left(m_b', 0\right)$, where $m_b' \geq m_b - 1$, and $\mathscr{R}$'s round starts with the state of the board $\mathbb{B}$ being the following, depending on the values of $\ell$ and $k_r$.
\begin{center}
\begin{tabular}{|c|c|c|} 
\hline
$\ell=1$ & $\ell=0$ and $k_r>0$ & $\ell=0$ and $k_r=0$\\ 
\hline
$k_e+k_b$ empty piles & $k_e+k_b$ empty piles & $k_e+k_b-1$ empty piles \\ 
$k_r$ $r$-singletons & $k_r-1$ $r$-singletons &  \\
 & & $1$ $b$-singleton \\
$1$ long $b$-pile & $1$ long $b$-pile & \\ 
\hline
\end{tabular}
\end{center}

Suppose $\mathscr{R}$ starts their round by making a sequence of $t+t'+s \geq 0$ moves, that consists of placing (refer to Theorem~\ref{thm:SameActive}):
$$
\left\{
    \begin{array}{l}
        \textrm{$t$ $b$ chips on $t$ empty piles,}\\
        \textrm{$t'$ $b$ chips on $t'$ $r$-singletons,}\\
        \textrm{and $s$ $r$ chips on $s$ $r$-singletons.}
    \end{array}
\right.
$$

The state of the board $\mathbb{B}$ is now the following, depending on the values of $\ell$ and $k_r$.
\begin{center}
\begin{tabular}{ |c|c|c| } 
 \hline
 $\ell=1$ & $\ell=0$ and $k_r>0$ & $\ell=0$ and $k_r=0$ \\ 
 \hline
 $k_e+k_b-t+s$ empty piles & $k_e+k_b-t+s$ empty piles & $k_e+k_b-t-1$ empty piles \\ 
 $k_r-t'-s$ $r$-singletons & $k_r-t'-s-1$ $r$-singletons &  \\ 
 $t$ $b$-singletons & $t$ $b$-singletons & $t+1$ $b$-singletons \\
 $t' +1$ long $b$-piles & $t' +1$ long $b$-piles & \\ 
 \hline
\end{tabular}
\end{center}

Observe that when $\ell=0$ and $k_r=0$, since there are no $r$-singletons, we have $t'=s=0$. When $\ell \neq 0$ or $k_r \neq 0$, let us denote the $t' +1$ long $b$-piles by $\beta_1,\beta_2,...,\beta_{t'+1}$, respectively.
Observe that in all three cases, there is no long $r$-pile.

Moreover, in all three cases, we have $\mathscr{R}=\left(n_b', n_r\right)$,
where $n_b' \leq n_b-t-t'$. If $n_b' = n_r=0$, then $\mathscr{R}$ is out of chips, they are eliminated and $\mathscr{B}$ wins.
Otherwise, $\mathscr{R}$ can end their round in four different ways
(refer to Theorem~\ref{thm:DifferentActive}), by:
(1) placing an $r$ chip on an empty pile,
(2) placing an $r$ chip on a $b$-singleton,
(3) placing an $r$ chip on a long $b$-pile,
or (4) placing a $b$ chip on a $b$-pile.
\begin{enumerate}
\item Suppose $\mathscr{R}$ places an $r$ chip on an empty pile. Thus, we have $n_r \geq 1$ and no long $r$-pile is created. Since $m_b > n_r $,
we then have $m_b' \geq m_b-1>n_r-1 \geq 0$, from which $m_b' > 0$. The move goes to $\mathscr{B}$ who captures all $b$-piles and discards all their prisoners, as per steps 1 and 2 of $\strat$.
Therefore, in all three cases, there is no long pile on the board.
As such, the board is of type I and it is still $\mathscr{B}$'s turn to play. Moreover,
we have $\mathscr{B}=(m_b'',0)$ and $\mathscr{R}=(n_b'',n_r'')$,
where $n_r''=n_r-1$, and $m_b''$ and $n_b''$ satisfy the following inequalities,
depending on the values of $\ell$ and $k_r$.
\begin{center}
\begin{tabular}{ |c|c|c| } 
 \hline
$\ell=1$ & $\ell=0$ and $k_r>0$ & $\ell=0$ and $k_r=0$ \\ \hline
\multicolumn{2}{|c|}{$m_b'' \geq m_b-1+ \sum_{i=1}^{t'+1}|\beta_{i}|_b$} & $m_b'' \geq m_b-1$\\ 
\multicolumn{2}{|c|}{$n_b'' \leq n_b-t-t'$} & $n_b'' \leq n_b-t-t'$\\
 \hline
\end{tabular}
\end{center}
Since $(n_b-t-t')+(n_r-1)<n_b+n_r$ and $m_b-1+\sum_{i=1}^{t'+1}|\beta_{i}|_b \geq m_b-1 > n_r-1$, 
then $m_b'' > n_r'' \geq 0$ and $n_b''+n_r'' < n_b+n_r$ in all three cases.

\item Suppose $\mathscr{R}$ places an $r$ chip on a $b$-singleton. Thus, we have $n_r \geq 1$ and one $(b,r)$-pile is created. Since $m_b > n_r $,
we then have $m_b' \geq m_b-1>n_r-1 \geq 0$, from which $m_b' > 0$. The move goes to $\mathscr{B}$, who captures all $b$-piles and discards all their prisoners, as per steps 1 and 2 of $\strat$.
Therefore, in all three cases, there is one long pile on the board, which is an $r$-pile.
As such, the board is of type I and the rest of the argument for this case is identical to the one for Case 1.

\item Suppose $\mathscr{R}$ places an $r$ chip on a long $b$-pile, say $\beta_1$ without loss of generality.
Note that we do not need to consider the case where $\ell = k_r = 0$ since there is no long $b$-pile in that case.
Thus, we have $n_r \geq 1$ and one long $r$-pile is created. 
Since $m_b > n_r $,
we then have $m_b' \geq m_b-1>n_r-1 \geq 0$, from which $m_b' > 0$. 
The move goes to $\mathscr{B}$, who captures all $b$-piles and discards all their prisoners, as per steps 1 and 2 of $\strat$.
Therefore, in all two cases, there is one long pile on the board, which is an $r$-pile.
As such, $\mathbb{B}$ is of type I and it is still $\mathscr{B}$'s turn to play. Moreover,
we have $\mathscr{B}=(m_b'',0)$ and $\mathscr{R}=(n_b'',n_r'')$,
where $n_r''=n_r-1$, and $m_b''$ and $n_b''$ satisfy the following inequalities, for all values of $\ell$ and $k_r$:
$$ m_b'' \geq m_b-1+\sum_{i=2}^{t'+1}|\beta_i|_b, \qquad
\text{and} \qquad n_b'' \leq n_b-t-t'.$$
Since $(n_b-t-t')+(n_r-1)<n_b+n_r$ and $m_b-1+\sum_{i=2}^{t'+1}|\beta_i|_b \geq m_b-1 > n_r-1$, 
then $m_b'' > n_r''\geq 0$ and $n_b''+n_r'' < n_b+n_r$ in all two cases.

\item Suppose $\mathscr{R}$ places a $b$ chip on a $b$-pile. This results in a capture for $\mathscr{B}$, from which $\mathscr{B}$ gains at least one $b$ chip.
The move goes to $\mathscr{B}$, who captures all $b$-piles and discards all their prisoners, as per steps 1 and 2 of $\strat$.
Therefore, in all three cases, there is no long pile on the board.
As such, $\mathbb{B}$ is of type I and it is still $\mathscr{B}$'s turn to play. Moreover,
we have $\mathscr{B}=(m_b'',0)$ and $\mathscr{R}=(n_b'',n_r'')$,
where $n_r''=n_r$, and $m_b''$ and $n_b''$ satisfy the following inequalities,
depending on the values of $\ell$ and $k_r$.
\begin{center}
\begin{tabular}{ |c|c|c| } 
\hline
$\ell=1$ & $\ell=0$ and $k_r>0$ & $\ell=0$ and $k_r=0$ \\ 
\hline
\multicolumn{2}{|c|}{$m_b'' \geq m_b+\sum_{i=1}^{t'+1}|\beta_i|_b$} & $m_b'' \geq m_b$\\ 
\multicolumn{2}{|c|}{$n_b'' \leq n_b-t-t'-1$} & $n_b'' \leq n_b-t-t'-1$\\
 \hline
\end{tabular}
\end{center}
Since $(n_b-t-t'-1)+n_r<n_b+n_r$ and $m_b+\sum_{i=1}^{t'+1}|\beta_i|_b \geq m_b > n_r $, 
then 
$m_b'' > n_r'' \geq 0$ and $n_b''+n_r'' < n_b+n_r$ in all three cases.
\end{enumerate}
\end{proof}

\begin{prop}
\label{prop-typeI-R}
Let $\mathbb{B}=(k_e,k_r,k_b,\ell,0)$ be of type I, $\mathscr{B}=\left(m_b, m_r\right)$, $\mathscr{R}=\left(n_b, n_r\right)$, and assume that $\mathscr{B}$ is the active player. If $0 \leq m_b \leq n_r$ when $\mathscr{B}$'s turn starts, then $\mathscr{R}$ wins by applying $\strat$ at every round.
\end{prop}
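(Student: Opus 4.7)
The plan is to mirror the proof of Proposition~\ref{prop-typeI-B}, with the roles of $\mathscr{B}$ and $\mathscr{R}$ swapped. I would use strong induction on a suitably chosen measure, for instance $\mu = m_b + m_r + n_b + n_r + |\mathbb{B}|$, the total number of chips currently in the game, to show that each full cycle consisting of $\mathscr{B}$'s round followed by $\mathscr{R}$'s round under $\strat$ either eliminates $\mathscr{B}$ outright or returns to a type I state in which the invariant $m_b \leq n_r$ is preserved and $\mu$ has strictly decreased. The base case is when $\mathscr{B}$ has no chips in hand and cannot gain any: since $\strat$ never donates, $\mathscr{R}$ refuses, $\mathscr{B}$ is eliminated, and $\mathscr{R}$ wins.

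For the inductive step I would first catalogue $\mathscr{B}$'s possible moves. On a type I board the only $b$-piles are $b$-singletons, so by Theorem~\ref{thm:SameActive} the stay-active options available to $\mathscr{B}$ are (i) $r$ on an empty pile, (ii) $r$ on a $b$-singleton (producing a length-two long $r$-pile), and (iii) $b$ on a $b$-singleton (a capture that returns $m_b$ to its previous value while discarding one $b$ chip); crucially, none of these increases $m_b$. By Theorem~\ref{thm:DifferentActive}, $\mathscr{B}$ then ends the round (if not already eliminated) with exactly one of (1) $b$ on an empty pile, (2) $b$ on an $r$-pile, or (3) $r$ on an $r$-pile (triggering an immediate capture by $\mathscr{R}$). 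Mirroring the case analysis of Proposition~\ref{prop-typeI-B}, I would treat each of these three round-endings separately and then apply $\mathscr{R}$'s $\strat$: capture all remaining $r$-piles, discard any $b$ prisoners, and place one $r$ chip on the longest $b$-pile or on an empty pile if none exists.

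The three items to verify in each case are: (a) the resulting board is again type I, since any long $b$-pile $\mathscr{B}$ created in ending (2) is converted by Step~3 of $\strat$ into a long $r$-pile, while all other long piles are $r$-piles captured in Step~1, leaving at most the single long $r$-pile produced by Step~3; (b) the invariant $m_b' \leq n_r'$ is preserved at $\mathscr{B}$'s next turn, which is immediate in endings (1) and (2) because $m_b$ drops by one while $n_r$ drops by at most one from Step~3; and (c) the measure $\mu$ strictly decreases, because each cycle discards at least one chip (either in a stay-active capture of type (iii), or during Step~1 or Step~2 of $\strat$, or in the round-ending capture of (3)). The main obstacle is ending (3), in which $\mathscr{R}$ can acquire a potentially large number of $r$ chips from the captured pile while Step~3 of $\strat$ spends only one; here a careful chip-counting argument, splitting into sub-cases depending on whether the captured pile is the original long $r$-pile $\rho$, a pre-existing $r$-singleton, or a fresh $r$-pile created by $\mathscr{B}$'s stay-active play, is what closes the argument. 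A secondary subtlety is confirming that $\mathscr{R}$ always has at least one $r$ chip to execute $\strat$; under the hypothesis $m_b \leq n_r$ together with the preserved invariants, this holds unless $\mathscr{B}$ has already been eliminated, in which case no further $\strat$ execution is needed.
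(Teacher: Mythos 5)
Your overall architecture---mirroring Proposition~\ref{prop-typeI-B}, cataloguing $\mathscr{B}$'s stay-active and round-ending moves via Theorems~\ref{thm:SameActive} and~\ref{thm:DifferentActive}, and checking after $\mathscr{R}$'s application of $\strat$ that the board is again of type I with $m_b \leq n_r$ preserved---is exactly the paper's. The gap is in your induction measure. You take $\mu$ to be the total number of chips remaining in the game and justify its strict decrease by asserting that every cycle discards at least one chip. That is false. Consider a cycle in which $\mathscr{B}$ makes no stay-active moves and ends the round by placing a $b$ chip on an empty pile (your ending (1)), on a board with no $r$-singletons and no long $r$-pile, while $\mathscr{R}$ holds no $b$ prisoners. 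Then $\mathscr{B}$'s move discards nothing, $\mathscr{R}$'s Step~1 has no $r$-piles to capture, Step~2 has no prisoners to discard, and Step~3 merely moves an $r$ chip from hand to board: no chip enters the dead box, $\mu$ is unchanged, and the induction does not advance. The same failure occurs in ending (2) when the $r$-pile that $\mathscr{B}$ covers was the only one on the board. Conversely, your ``main obstacle,'' ending (3), is actually the easy case for the measure, since the capture there forces a discard and the invariant $m_b \leq n_r$ is automatic because $n_r$ only grows.

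The repair is the measure the paper actually uses: $m_b + m_r$, the number of chips in $\mathscr{B}$'s hand. On a type I board the only captures available to $\mathscr{B}$ during the stay-active phase are of $b$-singletons, which leave $m_b$ unchanged while a $b$ chip is discarded and no prisoners are gained; every other stay-active move transfers a chip from $\mathscr{B}$'s hand to the board; and $\mathscr{R}$ playing $\strat$ never donates. Hence $m_b + m_r$ is non-increasing throughout $\mathscr{B}$'s round and drops by at least one at the round-ending placement (including ending (3), where the placed $r$ chip passes into the pile that $\mathscr{R}$ captures). With this measure your case analysis and invariant checks go through essentially as written and coincide with the paper's proof.
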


We prove Proposition~\ref{prop-typeI-R} using the following inductive argument on $m_b+m_r$.
\begin{description}
\item[Base case:] We prove that for all game states where $\mathbb{B}$ is of type I, $\mathscr{B}=\left(0, 0\right)$ and $\mathscr{R}=\left(n_b, n_r\right)$,
if $\mathscr{B}$ is the active player, then $\mathscr{R}$ wins by applying $\strat$.

\item[Inductive step:] Consider a game state where $\mathbb{B} = (k_e,k_r,k_b,\ell,0)$ is of type I, $\mathscr{B}=\left(m_b, m_r\right)$, and $\mathscr{R}=\left(n_b, n_r\right)$, such that $0\leq m_b \leq n_r$ and $m_b+m_r > 0$.
Assume $\mathscr{B}$ is the active player.
Moreover, suppose that, no matter what $\mathscr{B}$ plays, $\mathscr{R}$ applies $\strat$ during their round. Then, when $\mathscr{R}$'s round is over, we get $\mathbb{B} = (k_e'',k_r'',k_b'',\ell'',0)$, $\mathscr{B}=\left(m_b'', m_r''\right)$ and $\mathscr{R}=\left(n_b'', n_r''\right)$, where $\ell''\in\{0,1\}$, $\mathscr{B}$ is the active player, $0\leq m_b'' \leq n_r''$, and $m_b''+m_r'' < m_b+m_r$.
\end{description}

\begin{proof}
Suppose $\mathbb{B} = (k_e,k_r,k_b,\ell,0)$ is of type I.

Consider a game state where $m_b=m_r=0$ (base case). Since $\mathscr{B}$ is out of chips, they are eliminated and $\mathscr{R}$ wins.

Consider now a game state where $\mathscr{B}=\left(m_b, m_r\right)$ and $\mathscr{R}=\left(n_b, n_r\right)$ with $0 \leq m_b \leq n_r$ and $m_b+m_r > 0$.
Suppose $\mathscr{B}$ starts their round by making a sequence of $t+t'+s \geq 0$ moves, that consists of placing (refer to Theorem~\ref{thm:SameActive}):
$$
\left\{
    \begin{array}{l}
        \textrm{$t$ $r$ chips on $t$ empty piles,}\\
        \textrm{$t'$ $r$ chips on $t'$ $b$-singletons,}\\
        \textrm{and $s$ $b$ chips on $s$ $b$-singletons.}
    \end{array}
\right.
$$

The state of the board $\mathbb{B}$ is now the following: 
$$ \mathbb{B}=(k_e-t+s,k_r+t,k_b-s-t',t'+\ell,0). $$
Let us denote the $t'+\ell$ long $r$-piles by $\rho_1,\ldots,\rho_{t'+\ell}$, respectively.
Observe that there is no long $b$-pile.

We have $\mathscr{B} = (m_b',m_r')$, where $m_b'=m_b$ and $m_r' \leq m_r-t-t'$. If $m_b'=m_r'=0$, then $\mathscr{B}$ is out of chips. Therefore, $\mathscr{B}$ is eliminated and $\mathscr{R}$ wins. Otherwise,
$\mathscr{B}$ can end their round in four different ways (refer to Theorem~\ref{thm:DifferentActive}), by:
(1) placing a $b$ chip on an empty pile,
(2) placing a $b$ chip on an $r$-singleton,
(3) placing a $b$ chip on a long $r$-pile,
or (4) placing an $r$ chip on an $r$-pile.
\begin{enumerate}
\item Suppose $\mathscr{B}$ places a $b$ chip on an empty pile.
Thus, we have $m_b \geq 1$, from which $n_r \geq m_b \geq 1$. Moreover, a $b$-singleton is created and no long pile is created.
The move goes to $\mathscr{R}$, who captures all $r$-piles and discards all their prisoners, as per steps 1 and 2 of $\strat$. Then, as per Step 3 of $\strat$, $\mathscr{R}$ places an $r$ chip on a $b$-singleton.
Therefore, there is one long pile on the board, which is a $(b,r)$-pile. As such, $\mathbb{B}$ is of type I and it is $\mathscr{B}$'s turn to play.
Moreover, we have 
$\mathscr{B} = (m_b'',m_r'')$ with $m_b''=m_b-1$ and $m_r'' \leq m_r-t-t'$, and $\mathscr{R} = (0,n_r'')$ with $n_r'' \geq n_r+\sum_{i=1}^{t'+\ell}|\rho_i|_r-1$.
Therefore, we have
\begin{align*}
0 \leq m_b''&=m_b-1 \leq n_r-1 \leq n_r+\sum_{i=1}^{t'+\ell}|\rho_i|_r-1 \leq n_r'',\\
m_b''+m_r''&=m_b-1+m_r'' \leq m_b-1 + m_r-t-t' < m_b+m_r.
\end{align*}

\item Suppose $\mathscr{B}$ places a $b$ chip on an $r$-singleton.
Thus, we have $m_b \geq 1$, from which $n_r \geq m_b \geq 1$. Moreover, the long $b$-pile $(r,b)$ is created.
The move goes to $\mathscr{R}$, who captures all $r$-piles and discards all their prisoners, as per steps 1 and 2 of $\strat$. Then, as per Step 3 of $\strat$, $\mathscr{R}$ places an $r$ chip on the $(r,b)$-pile.
Therefore, there is one long pile on the board, which is an $r$-pile. As such, $\mathbb{B}$ is of type I and the rest of the argument for this case is identical to the one for Case 1.

\item Suppose $\mathscr{B}$ places a $b$ chip on a long $r$-pile, say $\rho_1$ without loss of generality.
Thus, we have $m_b \geq 1$, from which $n_r \geq m_b \geq 1$. Moreover, the long $b$-pile $\langle \rho_1,(b) \rangle$ is created.
The move goes to $\mathscr{R}$, who captures all $r$-piles and discards all their prisoners, as per steps 1 and 2 of $\strat$. Then, as per Step 3 of $\strat$, $\mathscr{R}$ places an $r$ chip on the $\langle \rho_1,(b) \rangle$-pile.
Therefore, there is one long pile on the board, which is an $r$-pile. As such, $\mathbb{B}$ is of type I and it is $\mathscr{B}$'s turn to play.
Moreover, we have $\mathscr{B} = (m_b'',m_r'')$ with $m_b''=m_b-1$ and $m_r'' \leq m_r-t-t'$, and $\mathscr{R} = (0,n_r'')$ with
$n_r'' \geq  n_r+\sum_{i=2}^{t'+\ell}|\rho_i|_r-1$.
Therefore, we have 
\begin{align*} 
0\leq m_b'' &= m_b-1 \leq n_r-1 \leq n_r+\sum_{i=2}^{t'+\ell}|\rho_i|_r-1 \leq n_r'',\\
m_b''+m_r''&= m_b-1 +m_r'' \leq m_b-1 + m_r-t-t' < m_b+m_r.
\end{align*}

\item Suppose $\mathscr{B}$ places an $r$ chip on an $r$-pile.
This results in a capture for $\mathscr{R}$, from which $\mathscr{R}$ gains at least one $r$ chip.
The move goes to $\mathscr{R}$, who captures all $r$-piles and discards all their prisoners, as per steps 1 and 2 of $\strat$. Then, as per Step 3 of $\strat$, $\mathscr{R}$ places an $r$ chip on a $b$-singleton if there is one on the board, or on an empty pile otherwise. In both cases, there is at most one long pile on the board, which is an $r$-pile. As such, $\mathbb{B}$ is of type I and it is $\mathscr{B}$'s turn to play.
Moreover, we have $\mathscr{B} = (m_b'',m_r'')$ with $m_b''=m_b$ and $m_r'' \leq m_r-t-t'-1$, and $\mathscr{R} = (0,n_r'')$ with
$n_r'' \geq  n_r+\sum_{i=1}^{t'+\ell}|\rho_i|_r$.
Therefore, we have 
\begin{align*}
0\leq m_b'' &= m_b \leq n_r \leq n_r+\sum_{i=1}^{t'+\ell}|\rho_i|_r \leq n_r'',\\
m_b''+m_r'' &= m_b+m_r'' \leq m_b + (m_r-t-t'-1) < m_b+m_r.
\end{align*}
\end{enumerate}
\end{proof}

The following theorem is a direct consequence of Propositions~\ref{prop-typeI-B} and~\ref{prop-typeI-R}.
\begin{theorem}
\label{thm-type1}
Let $\mathbb{B}=(k_e,k_r,k_b,\ell,0)$ be of type I, $\mathscr{B}=\left(m_b, m_r\right)$, $\mathscr{R}=\left(n_b, n_r\right)$, and assume that $\mathscr{B}$ is the active player. Then, $\mathscr{B}$ has a winning strategy if and only if $m_b>n_r$ when $\mathscr{B}$'s turn starts.
Whenever there is a winning strategy for $\mathscr{B}$ (respectively for $\mathscr{R}$), then $\strat$ is such a strategy.
\end{theorem}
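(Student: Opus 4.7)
The plan is to deduce the theorem directly from Propositions~\ref{prop-typeI-B} and~\ref{prop-typeI-R}, which together cover every possible type I configuration based on whether $m_b>n_r$ or $0\leq m_b\leq n_r$. Since $m_b$ and $n_r$ are non-negative integers, any type I game state with $\mathscr{B}$ as the active player falls into exactly one of these two cases, and so between them the two propositions already dictate which player can force a win.

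For the sufficient direction I would simply invoke Proposition~\ref{prop-typeI-B}: if $m_b>n_r$ when $\mathscr{B}$'s turn starts, then $\mathscr{B}$ wins by applying $\strat$ at every round. This one sentence simultaneously gives that $\mathscr{B}$ has a winning strategy and that $\strat$ is such a strategy, taking care of both the forward implication and the final clause for $\mathscr{B}$. For the necessary direction I would argue by contrapositive: if $m_b\leq n_r$, then by Proposition~\ref{prop-typeI-R}, $\mathscr{R}$ wins by applying $\strat$ at every round. In particular $\mathscr{R}$ holds a winning strategy, which rules out the existence of a winning strategy for $\mathscr{B}$, and also establishes the final clause for $\mathscr{R}$.

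The only step that deserves even a brief word of justification is the last implication: that the existence of a winning strategy for $\mathscr{R}$ precludes one for $\mathscr{B}$. This is a standard fact for two-player games with a unique winner, since if both players held winning strategies from the same initial configuration, then playing them against each other would force both of them to win at the conclusion of the (necessarily terminating, by the propositions' inductive bounds) play, a contradiction. No other obstacle arises here — all of the genuine combinatorial work has already been absorbed into the proofs of Propositions~\ref{prop-typeI-B} and~\ref{prop-typeI-R}, so the theorem truly is a direct corollary of the dichotomy they jointly furnish.
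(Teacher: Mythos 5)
Your proposal is correct and matches the paper exactly: the paper also derives Theorem~\ref{thm-type1} as a direct consequence of the dichotomy furnished by Propositions~\ref{prop-typeI-B} and~\ref{prop-typeI-R}. Your added remark that a winning strategy for $\mathscr{R}$ precludes one for $\mathscr{B}$ is a reasonable bit of explicitness that the paper leaves implicit.
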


\subsection{Generalized Type I Board}
\label{subsec:gentypeI}

We slightly extend the definition of type I boards in the following way.
\begin{definition}
The board $\mathbb{B}$ is said to be of \emph{generalized type I} if $\mathbb{B} = (k_e,k_r,k_b,\ell,0)$. If $\ell\geq 1$, we denote the $\ell$ long $r$-piles by $\rho_{1}, \rho_{2}, \ldots, \rho_{\ell}$.
\end{definition}
%\begin{figure}[H]
%\centering
%\includegraphics[scale=1]{}
%\caption{Generalized Type I Board with $k=8$}
%\end{figure}
The structure of this subsection is the same as the one for Subsection~\ref{subsec:typeI}.
We first describe the states where there is a winning strategy for $\mathscr{B}$ on a generalized type I board (refer to Propositions~\ref{prop-gentype1-B1} and~\ref{prop-gentype1-B2}). Then we describe the states where there is a winning strategy for $\mathscr{R}$ on a generalized type I board (refer to Propositions~\ref{prop-gentype1-R1} and~\ref{prop-gentype1-R}).
We finally present the characterization of the winning states for the first player in Theorem~\ref{thm-gentype1}.

\begin{prop}
\label{prop-gentype1-B1}
Let $\mathbb{B}=(k_e,k_r,k_b,\ell,0)$ be of generalized type I, $\mathscr{B}=\left(m_b, m_r\right)$, $\mathscr{R}=\left(n_b, n_r\right)$, and assume that $\mathscr{B}$ is the active player. If $m_b>0$ and $n_r=0$ when $\mathscr{B}$'s turn starts, then $\mathscr{B}$ wins by applying $\strat$ at every round.
\end{prop}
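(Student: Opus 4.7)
The plan is to prove the proposition by induction on $n_b$, mirroring the template of Proposition~\ref{prop-typeI-B}. The base case is $n_b = 0$: combined with $n_r = 0$, this means $\mathscr{R}$ has no chips. Since $m_b > 0$, $\mathscr{B}$ can apply $\strat$; after step~3 the turn passes to $\mathscr{R}$ by Theorem~\ref{thm:DifferentActive}, and because $\mathscr{B}$ does not donate under $\strat$, $\mathscr{R}$ is eliminated and $\mathscr{B}$ wins.

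For the inductive step ($n_b \geq 1$), I would track the state through one full cycle of $\mathscr{B}$'s round followed by $\mathscr{R}$'s round and verify the inductive hypothesis at the start of the next $\mathscr{B}$-round. When $\mathscr{B}$ applies $\strat$, they first capture all $b$-piles---there are no long $b$-piles by hypothesis, so only the $k_b$ $b$-singletons; by Proposition~\ref{prop:XcapturesXpile}, $m_b$ does not decrease---then discard all $r$-prisoners, and finally place a $b$ chip on a longest $r$-pile (or on an $r$-singleton if $\ell=0$ but $k_r\geq 1$, or on an empty pile otherwise). The turn then passes to $\mathscr{R}$ by Theorem~\ref{thm:DifferentActive}. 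The key observation for $\mathscr{R}$'s round is that, since $n_r=0$, the only placements available to $\mathscr{R}$ are (i) $b$ on an empty pile, (ii) $b$ on an $r$-pile, and (iii) $b$ on a $b$-pile, and only (iii) ends the round, always via a capture for $\mathscr{B}$.

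The case split is then clean. If $\mathscr{R}$ cannot produce a type-(iii) move before exhausting their $b$ chips, they are eliminated and $\mathscr{B}$ wins. Otherwise, at the start of $\mathscr{B}$'s next round I would check three invariants: $m_b>0$ (the capture triggered by $\mathscr{R}$'s final $b$ returns at least one $b$ chip to $\mathscr{B}$ by Proposition~\ref{prop:XcapturesXpile}, offsetting the single $b$ placed at step~3 of $\strat$); $n_r=0$ (since $\mathscr{R}$ cannot capture without an $r$ chip and $\mathscr{B}$ donates no prisoners under $\strat$); and $n_b$ strictly smaller (since $\mathscr{R}$ placed at least one $b$ chip during the round). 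The inductive hypothesis then yields a win.

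The main subtlety is that the board at the start of $\mathscr{B}$'s next round need not be of generalized type~I, since $\mathscr{R}$'s moves of types (i) and (ii) can create new $b$-singletons and new long $b$-piles. I would address this by proving the slightly stronger statement obtained by dropping the generalized-type-I hypothesis entirely; the extra long $b$-piles are harmlessly absorbed by step~1 of $\strat$ on the next $\mathscr{B}$-round, and by Proposition~\ref{prop:XcapturesXpile} capturing them does not hurt $m_b$. Bookkeeping through the discards and donations allowed inside a round is handled exactly as in Proposition~\ref{prop-typeI-B} by writing player-state quantities with inequalities such as $m_b' \geq m_b - 1$ and $n_b'' \leq n_b - t - t'$, where $t$ and $t'$ count $\mathscr{R}$'s type-(i) and type-(ii) moves.
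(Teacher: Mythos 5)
Your proof is correct, but it follows a genuinely different route from the paper's. The paper's proof of Proposition~\ref{prop-gentype1-B1} inducts on $\ell$, the number of long $r$-piles: because $\mathscr{R}$ holds no $r$ chips and $\strat$ never donates, $\mathscr{R}$ can only convert long $r$-piles into $b$-piles during their round, never create new ones, so after $\mathscr{B}$ recaptures everything the board is again of generalized type I with strictly fewer long $r$-piles; the base case $\ell\in\{0,1\}$ is delegated to Theorem~\ref{thm-type1}. You instead induct on $n_b$, exploiting the fact that $\mathscr{R}$ must spend at least one $b$ chip per round and can never replenish it (no captures without $r$ chips, no donations under $\strat$), and you repair the loss of the generalized-type-I invariant by strengthening the claim to arbitrary boards. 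Both decreasing quantities are legitimate, and the strengthened statement is true --- it is exactly the $n_r=0$ branch of the paper's final theorem, which the paper instead derives in Proposition~\ref{prop-gen-1} by reducing a general board to generalized type I after one round of $\strat$ --- so your argument yields that corollary directly, while the paper's stays within its board-type taxonomy and reuses Theorem~\ref{thm-type1} as a black box. One minor attribution slip: the fact that $\mathscr{R}$'s round-ending move hands $\mathscr{B}$ at least one $b$ chip is not literally Proposition~\ref{prop:XcapturesXpile}, which concerns the active player capturing their own pile; it follows directly from the Capture Rule, since the captured pile has two $b$ chips on top and only one chip is discarded.
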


We prove Proposition~\ref{prop-gentype1-B1} using the following inductive argument on $\ell$.
\begin{description}
\item[Base case:] We prove that for all game states where $\mathbb{B} = (k_e,k_r,k_b,\ell,0)$ with $\ell\in\{0,1\}$, $\mathscr{B}=\left(m_b, m_r\right)$ with $m_b > 0$, and $\mathscr{R}=\left(n_b, 0\right)$, if $\mathscr{B}$ is the active player, then $\mathscr{B}$ wins by applying $\strat$.
\item[Inductive step:] Consider a game state where $\mathbb{B} = (k_e,k_r,k_b,\ell,0)$ with $\ell > 1$, $\mathscr{B}=\left(m_b, m_r\right)$ with $m_b > 0$, and $\mathscr{R}=\left(n_b, 0\right)$. Assume $\mathscr{B}$ is the active player and they apply $\strat$.
Then, no matter what $\mathscr{R}$ plays during their round, when $\mathscr{R}$'s round is over,
we get $\mathbb{B} = (k_e'',k_r'',k_b'',\ell'',0)$, $\mathscr{B}=\left(m_b'', m_r''\right)$ and $\mathscr{R}=\left(n_b'', 0\right)$, where $\mathscr{B}$ is the active player, $m_b'' > 0$ and $\ell'' < \ell$.
\end{description}

\begin{proof}
Suppose $\mathbb{B} = (k_e,k_r,k_b,\ell,0)$ is of generalized type I.

Consider a game state where $\ell\in\{0,1\}$ (base case). Then $\mathbb{B}$ is of type I. Since $m_b>0=n_r$, we know from Theorem~\ref{thm-type1} that $\mathscr{B}$ wins by applying $\strat$.

Consider now a game state where $\mathscr{B}=\left(m_b, m_r\right)$ with $m_b > 0$, $\mathscr{R}=\left(n_b, 0\right)$ and $\ell > 1$.
Since $m_b>0$, then $\mathscr{B}$ has at least one $b$ chip. 
Player $\mathscr{B}$ applies Strategy $\strat$: they capture $k_b$ $b$-singletons and discard all their prisoners as per steps 1 and 2 of $\strat$. Then, as per Step 3 of $\strat$, they place a $b$ chip on the largest $r$-pile on the board. We have $\mathscr{B}=(m_b',0)$, where $m_b'\geq m_b - 1$, and $\mathscr{R}$'s round starts. The state of the board $\mathbb{B}$ is now the following: 
$$ \mathbb{B}=(k_e+k_b,k_r,0,\ell-1,1). $$
Suppose $\mathscr{R}$ starts their round by making a sequence of $t+t'+t''\geq 0$ moves, that consists of placing (refer to Theorem~\ref{thm:SameActive}):
$$
\left\{
	\begin{array}{l}
		\text{$t$ $b$ chips on $t$ empty piles,}\\
		\text{$t'$ $b$ chips on $t'$ $r$-singletons}\\
		\text{and $t''$ $b$ chips on $t''$ long $r$-piles.}\\
	\end{array}
\right.
$$
The state of the board $\mathbb{B}$ is now the following:
$$ \mathbb{B}=(k_e+k_b-t,k_r-t',t,\ell-t''-1,t'+t''+1). $$
Let us denote the $t'+t''+1$ long $b$-piles by $\beta_1,\beta_2,\ldots,\beta_{t'+t''+1}$, respectively.

We have $\mathscr{R}=(n_b',0)$, where $n_b'\leq n_b-t-t'-t''$. If $n_b'=0$, then $\mathscr{R}$ is out of chips, they are eliminated and $\mathscr{B}$ wins. Otherwise, the only way $\mathscr{R}$ can end their round is by placing a $b$ chip on a $b$-pile (refer to Theorem~\ref{thm:DifferentActive}).

Suppose $\mathscr{R}$ places a $b$ chip on a $b$-pile. This results in a capture for $\mathscr{B}$, from which $\mathscr{B}$ gains at least one $b$ chip. The move goes to $\mathscr{B}$, who captures all $b$-piles and discards all their prisoners, as per steps 1 and 2 of $\strat$. The board no longer contains any long $b$-piles, and exactly $\ell''=\ell-t''-1<\ell$ long $r$-piles. As such, $\mathbb{B}$ is of generalized type I and it is still $\mathscr{B}$'s turn to play. Moreover, we have $\mathscr{B}=(m_b'',0)$ and $\mathscr{R}=(n_b'',0)$, where
$$m_b''\geq m_b+\sum_{i=1}^{t'+t''+1}|\beta_i|_b> m_b>0.$$
\end{proof}

\begin{prop}
\label{prop-gentype1-B2}
Let $\mathbb{B} = (k_e,k_r,k_b,\ell,0)$ be of generalized type I, $\mathscr{B}=\left(m_b, m_r\right)$, $\mathscr{R}=\left(n_b, n_r\right)$, and assume that $\mathscr{B}$ is the active player. If
\begin{align}
\label{eq.prop-gentype1-B2}
m_b, n_r>0 \qquad \textrm{and} \qquad m_b>n_r+\sum_{i=1}^{\ell}\left|\rho_{i}\right|_r-\max_{1\leq i\leq \ell}\big\{|\rho_{i}|_r\big\} 
\end{align}
when $\mathscr{B}$'s turn starts, then $\mathscr{B}$ wins by applying $\strat$ at every round.
\end{prop}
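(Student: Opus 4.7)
The plan is to prove Proposition~\ref{prop-gentype1-B2} by strong induction on the pair $(\ell, n_r)$ ordered lexicographically, following the inductive style of the preceding propositions. Throughout, denote $N:=n_r+\sum_{i=1}^{\ell}|\rho_i|_r-\max_{1\leq i\leq \ell}|\rho_i|_r$, so condition~(\ref{eq.prop-gentype1-B2}) becomes $m_b>N$. The base cases are immediate: if $\ell\leq 1$ then $N=n_r$, condition~(\ref{eq.prop-gentype1-B2}) reduces to $m_b>n_r$, the board is of type~I, and Proposition~\ref{prop-typeI-B} applies; if instead $n_r=0$, Proposition~\ref{prop-gentype1-B1} applies directly. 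The nontrivial regime is thus $\ell\geq 2$ and $n_r\geq 1$, which forces $m_b\geq 2$.

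For the inductive step, I have $\mathscr{B}$ apply $\strat$: capture all $k_b$ $b$-singletons (preserving $m_b$ by Proposition~\ref{prop:XcapturesXpile}), discard all $r$-prisoners, and place one $b$-chip atop the largest long $r$-pile $\rho_j$, with $M:=|\rho_j|_r$. This creates the long $b$-pile $\beta=\langle\rho_j,(b)\rangle$ and leaves the board with $\ell-1$ long $r$-piles. I then analyze $\mathscr{R}$'s round as in the previous proofs: parametrize the same-active moves by counts $u_1,\ldots,u_5$, corresponding respectively to $b$ on an empty pile, $b$ on an $r$-singleton, $b$ on a long $r$-pile, capture of an $r$-singleton, and capture of a long $r$-pile; then case-split on the ending move permitted by Theorem~\ref{thm:DifferentActive}: $r$ on an empty pile, $r$ on a $b$-pile (either $\beta$, a $b$-singleton created by $u_1$, or a long $b$-pile created by $u_2$ or $u_3$), or $b$ on a $b$-pile (triggering a capture by $\mathscr{B}$). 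After $\mathscr{R}$'s round, Steps~1--2 of $\mathscr{B}$'s next $\strat$-round capture all remaining $b$-piles and discard the new prisoners, returning the board to generalized type~I. For each ending-move subcase, I verify that the updated pair $(\ell'', n_r'')$ is strictly lex-smaller than $(\ell, n_r)$ and that either $n_r''=0$ or $m_b''>N''$, so one of Propositions~\ref{prop-typeI-B}, \ref{prop-gentype1-B1} or the inductive hypothesis closes the case.

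I expect the main obstacle to be the delicate subcase where $\mathscr{R}$ makes no same-active moves and ends the round with $r$ on $\beta$: then $\beta$ becomes a long $r$-pile $\langle\rho_j,(b),(r)\rangle$ with $|\rho_j|_r+1$ $r$-chips, strictly exceeding every other long $r$-pile, so $\ell''=\ell$ and I must rely on the second coordinate, $n_r''=n_r-1$. A direct calculation yields $N''=N-1$ while $m_b''=m_b-1$, which preserves $m_b>N$ as $m_b''>N''$. The opposite extreme is the $u_5>0$ regime, in which $\mathscr{R}$ greedily captures long $r$-piles and may discard $b$-chips from each pile to retain every captured $r$-chip; here $n_r''$ can spike substantially, but each such capture strictly decreases $\ell$, so the lex decrease is driven by the first coordinate. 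The $b$-chips $\mathscr{B}$ recovers in Step~1 of the next round from capturing $\beta$ and the long $b$-piles created by $u_2$ and $u_3$ must then compensate for $\mathscr{R}$'s gain, yielding the strict preservation of $m_b''>N''$. Verifying this inequality across all ending-move subcases combined with the parametrized same-active counts $u_1,\ldots,u_5$ will be the bulk of the computation.
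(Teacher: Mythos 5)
Your proposal is correct and, apart from the choice of induction measure, coincides with the paper's proof: the paper also has $\mathscr{B}$ apply $\strat$, parametrizes $\mathscr{R}$'s same-active moves by exactly your five counts, splits on the same ending moves, maintains the same invariant $m_b''>N''$, and falls back on Proposition~\ref{prop-typeI-B} or Proposition~\ref{prop-gentype1-B1} when the new state degenerates. The one genuine difference is the termination measure: the paper inducts on the single potential $\nu=n_b+n_r+\sum_{i=1}^{\ell}(|\rho_i|-1)-\max_i(|\rho_i|-1)-1$ of~\eqref{eq.def.nu}, which strictly drops in every case, whereas you use the lexicographic pair $(\ell,n_r)$. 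Your measure does work: the number of long $r$-piles surviving $\mathscr{B}$'s cleanup is at most $\ell-u_3-u_5$ plus one for an ending move of type ``$r$ on a $b$-pile,'' so $\ell''\leq\ell$ with equality only when $u_3=u_5=0$, and in that event $\mathscr{R}$ gains no $r$ chips during the round and spends one on the ending move, forcing $n_r''=n_r-1$; arguably this two-coordinate argument is cleaner than engineering $\nu$. One caution on your $u_5>0$ discussion: the claim that the $b$ chips $\mathscr{B}$ recovers ``must compensate for $\mathscr{R}$'s gain'' is not the mechanism that closes the inequality and is false if read as $m_b''-m_b\geq n_r''-n_r$ (a captured pile can hand $\mathscr{R}$ many more $r$ chips than $\mathscr{B}$ ever recovers in $b$ chips). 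What actually saves $m_b''>N''$ is that the $r$ chips $\mathscr{R}$ captures were already counted in $N$ via $\sum_i|\rho_i|_r$, so each capture merely transfers them from the board term to the $n_r$ term and leaves $N$ unchanged; the recovered $b$ chips are only needed to offset the single $b$ chip $\mathscr{B}$ spent, and the strict slack comes from excluding the maximal pile $\rho_j$ from the sum. Once you write the inequality chain for $N''$ directly (as the paper does), this falls out, so the gap is in the narration rather than in the plan.
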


Let
\begin{align}
\label{eq.def.nu}
\nu\big((\rho_1,\rho_2,...,\rho_{\ell}),n_b,n_r\big)=n_b+n_r+\sum_{i=1}^{\ell}\left(\left|\rho_{i}\right|-1\right)-\max_{1\leq i\leq \ell}\big\{\left|\rho_{i}\right|-1\big\} -1 .
\end{align}
Intuitively,
$\nu\big((\rho_1,\rho_2,...,\rho_{\ell}),n_b,n_r\big)$ corresponds to the total number of chips in $\mathscr{R}$'s possession if, after $\mathscr{B}$'s current round, $\mathscr{R}$ were to capture all $r$-piles on the board, keep all prisoners they are allowed to keep, and end their round by placing a chip on the board.
In the following proof and for the rest of this paper, we assume that an empty $\max$ is equal to $0$, for instance $\max_{1\leq i\leq 0} \big\{|\rho_{i}|_r\big\}  = 0$. We prove Proposition~\ref{prop-gentype1-B2} using the following inductive argument on $\nu\big((\rho_1,\rho_2,...,\rho_{\ell}),n_b,n_r\big)$.

\begin{description}
\item[Base case:] We prove that for all game states where $\mathbb{B}=(k_e,k_r,k_b,\ell,0)$, $\mathscr{B}=\left(m_b, m_r\right)$, $\mathscr{R}=\left(n_b, n_r\right)$, \eqref{eq.prop-gentype1-B2} is satisfied and $\nu\big((\rho_1,\rho_2,...,\rho_{\ell}),n_b,n_r\big) = 0$, if $\mathscr{B}$ is the active player, then $\mathscr{B}$ wins by applying $\strat$.
\item[Inductive step:] Consider a game state where $\mathbb{B}=(k_e,k_r,k_b,\ell,0)$, $\mathscr{B}=\left(m_b, m_r\right)$, $\mathscr{R}=\left(n_b, n_r\right)$, \eqref{eq.prop-gentype1-B2} is satisfied, and $\nu\big((\rho_1,\rho_2,...,\rho_{\ell}),n_b,n_r\big) > 0$. Assume $\mathscr{B}$ is the active player and they apply $\strat$. Then, no matter what $\mathscr{R}$ plays during their round, when $\mathscr{R}$'s round is over, we get $\mathbb{B}=(k_e'',k_r'',k_b'',\ell'',0)$, $\mathscr{B}=\left(m_b'', m_r''\right)$ and $\mathscr{R}=\left(n_b'', n_r''\right)$, where $\mathscr{B}$ is the active player, \eqref{eq.prop-gentype1-B2} is satisfied by $m_b''$, $n_r''$ and $(\rho''_1,\rho''_2,...,\rho''_{\ell''})$, and $\nu\big((\rho''_1,\rho''_2,...,\rho''_{\ell''}),n_b'',n_r''\big)<\nu\big((\rho_1,\rho_2,...,\rho_{\ell}),n_b,n_r\big)$.
\end{description}

\begin{proof}
Suppose $\mathbb{B} = (k_e,k_r,k_b,\ell,0)$ is of generalized type I, and consider the parameter $\nu\big((\rho_1,\rho_2,...,\rho_{\ell}),n_b,n_r\big)$ (refer to~\eqref{eq.def.nu}). We need the following observation for the base case. In~\eqref{eq.def.nu},
\begin{align}
\label{eq.def.n.sub}
\sum_{i=1}^{\ell}\left(\left|\rho_{i}\right|-1\right)-\max_{1\leq i\leq \ell}\big\{\left|\rho_{i}\right|-1\big\} = 0
\end{align}
whenever $\ell \in \{0,1\}$. Otherwise, if $\ell \geq 2$, then~\eqref{eq.def.n.sub} is strictly positive.

Consider a game state where $\nu\big((\rho_1,\rho_2,...,\rho_{\ell}),n_b,n_r\big) = 0$ (base case). Since $n_r>0$ by~\eqref{eq.prop-gentype1-B2}, this implies that $n_b=0$, $n_r=1$, and $\ell \in \{0,1\}$ by the previous observation. As such, $\mathbb{B}$ is of type I. Inequality~\eqref{eq.prop-gentype1-B2} becomes $m_b>n_r>0$ and so, by Theorem~\ref{thm-type1}, $\mathscr{B}$ has a winning strategy.

Consider now a game state where $\mathscr{B}=\left(m_b, m_r\right)$, $\mathscr{R}=\left(n_b, n_r\right)$ and~\eqref{eq.prop-gentype1-B2} is satisfied. Moreover, assume $\nu\big((\rho_1,\rho_2,...,\rho_{\ell}),n_b,n_r\big) > 0$. Since $m_b > 0$, $\mathscr{B}$ has at least one $b$ chip. Player $\mathscr{B}$ applies Strategy $\strat$: they capture $k_b$ $b$-singletons and discard all their prisoners as per steps 1 and 2 of $\strat$. Without loss of generality, assume the largest $r$-pile on the board is $\rho_{\ell}$. Then, as per Step 3 of $\strat$, they place a $b$ chip on $\rho_{\ell}$.
We have $\mathscr{B}=(m_b', 0)$, where $m_b' \geq m_b - 1$, and $\mathscr{R}$'s round starts. The state of the board $\mathbb{B}$ is now the following: 
$$ \mathbb{B}=(k_e+k_b,k_r,0,\ell-1,1). $$
Note that the one long $b$-pile on the board is $\langle\rho_{\ell},(b)\rangle$. Suppose $\mathscr{R}$ starts their round by making a sequence of $t+t'+t''+s+s' \geq 0$ moves, that consists of placing (refer to Theorem \ref{thm:SameActive}):
$$
\left\{
	\begin{array}{l}
		\text{$t$ $b$ chips on $t$ empty piles,}\\
		\text{$t'$ $b$ chips on $t'$ $r$-singletons,}\\
		\text{$t''$ $b$ chips on $t''$ long $r$-piles,}\\
		\text{$s$ $r$ chips on $s$ $r$-singletons}\\
		\text{and $s'$ $r$ chips on $s'$ long $r$-piles.}
	\end{array}
\right.
$$

The state of the board $\mathbb{B}$ is now the following: 
$$ \mathbb{B}=(k_e+k_b-t+s+s',k_r-t'-s,t,\ell-t''-s'-1,t'+t''+1). $$
Let us denote the $s'$ long $r$-piles that were captured by $\rho_{1}, \ldots, \rho_{s'}$ and the remaining long $r$-piles on the board by 
$\rho_{s'+1}, \ldots, \rho_{\ell-t''-1}$. The $t''+1$ long $b$-piles that were created from long $r$-piles are $\langle\rho_{\ell-t''},(b)\rangle,\langle\rho_{\ell-t''+1},(b)\rangle, \ldots,\langle\rho_{\ell},(b)\rangle$. We rename the $t'+t''+1$ long $b$-piles as $\beta_1,\beta_2,\ldots,\beta_{t'+t''+1}$.

For each pile captured by $\mathscr{R}$, one chip must be discarded (by the Capture Rule). This chip can either be a guard or a prisoner.
Therefore, we have $\mathscr{R}=(n_b', n_r')$ where
\begin{align*}
n_b' &\leq n_b-t-t'-t''+\sum_{i=1}^{s'}|\rho_i|_b ,\\
n_r' &\leq n_r+\sum_{i=1}^{s'}|\rho_i|_r , \\
n_b'+n_r' &\leq n_b+n_r-t-t'-t''+\sum_{i=1}^{s'}(|\rho_i|-1) .
\end{align*}

Player $\mathscr{R}$ can end their round in four different ways (refer to Theorem \ref{thm:DifferentActive}), by: (1) placing an $r$ chip on an empty pile, (2) placing an $r$ chip on a $b$-singleton, (3) placing an $r$ chip on a long $b$-pile, or (4) placing a $b$ chip on a $b$-pile.
\begin{enumerate}
\item Suppose $\mathscr{R}$ places an $r$ chip on an empty pile. Then, an $r$-singleton is created and no long pile is created. The move goes to $\mathscr{B}$, who captures all $b$-piles and discards all their prisoners, as per steps 1 and 2 of $\strat$. 
Therefore, there are $\ell-t''-s'-1$ long piles on the board, which are all long $r$-piles. As such, $\mathbb{B}$ is of generalized type I and it is $\mathscr{B}$'s turn to play. Moreover, we have $\mathscr{B}=(m_b'',0)$ and $\mathscr{R}=(n_b'', n_r'')$, where 
\begin{align}
\label{ineq.mbpp.3.7}
m_b''&\geq m_b-1+\sum_{i=1}^{t'+t''+1}|\beta_i|_b , \\
\label{ineq.nbpp.3.7}
n_b'' &\leq n_b-t-t'-t''+\sum_{i=1}^{s'}|\rho_i|_b , \\
\label{ineq.nrpp.3.7}
n_r'' &\leq n_r+\sum_{i=1}^{s'}|\rho_i|_r -1 , \\
\label{ineq.sumnbnrpp.3.7}
n_b''+n_r'' &\leq n_b+n_r-t-t'-t''+\sum_{i=1}^{s'}(|\rho_i|-1) - 1.
\end{align}

First, note that by~\eqref{ineq.mbpp.3.7}, we have
$$
m_b''\geq m_b-1+\sum_{i=1}^{t'+t''+1}|\beta_i|_b
\geq m_b-1+\sum_{i=1}^{1}|\beta_i|_b
\geq m_b-1+1
= m_b>0.
$$
If $n_r'' = 0$, then $\mathscr{B}$ has a winning strategy by Proposition~\ref{prop-gentype1-B1}. Otherwise, then $n_r'' > 0$ and the first part of~\eqref{eq.prop-gentype1-B2} is satisfied by $m_b''$ and $n_r''$.
Moreover, we have  
\begin{align}
\nonumber
m_b''>\: &m_b-1 & \text{by~\eqref{ineq.mbpp.3.7},}\\
\nonumber
>\: &n_r+\sum_{i=1}^{\ell}|\rho_{i}|_r-\max_{1\leq i\leq \ell}\big\{|\rho_i|_r\big\}-1 & \text{by~\eqref{eq.prop-gentype1-B2},}\\
\label{proof.prop-gentype1-B2.1}
=\: &n_r+\sum_{i=1}^{\ell-1}|\rho_{i}|_r -1 \\
\nonumber
\geq\: &n_r+\sum_{i=1}^{\ell-t''-1}|\rho_i|_r-1-\max_{s'+1\leq i\leq \ell-t''-1}\big\{|\rho_i|_r\big\}\\
\nonumber
=\: &n_r+\sum_{i=1}^{s'}|\rho_i|_r-1+\sum_{i=s'+1}^{\ell-t''-1}|\rho_i|_r-\max_{s'+1\leq i\leq \ell-t''-1}\big\{|\rho_i|_r\big\} \\
\nonumber
\geq\: &n_r''+\sum_{i=s'+1}^{\ell-t''-1}|\rho_i|_r-\max_{s'+1\leq i\leq \ell-t''-1}\big\{|\rho_i|_r\big\} & \text{by~\eqref{ineq.nrpp.3.7},}
\end{align}
where~\eqref{proof.prop-gentype1-B2.1} follows from the fact that $\rho_\ell$ was the largest $r$-pile at the beginning. Therefore, \eqref{eq.prop-gentype1-B2} is satisfied by $m_b''$, $n_r''$ and $(\rho_{s'+1},...,\rho_{\ell-t''-1})$.

Finally, we have
\begin{align}
\nonumber
& \nu\left((\rho_{s'+1},...,\rho_{\ell-t''-1}),n_b'',n_r''\right)\\
\nonumber
=\: & n_b''+n_r''+\sum_{i=s'+1}^{\ell-t''-1}(|\rho_i|-1)-\max_{s'+1\leq i\leq \ell-t''-1}\big\{|\rho_i|-1\big\}-1\\
\nonumber
\leq\: &n_b+n_r+\sum_{i=1}^{s'}(|\rho_i|-1)-1+\sum_{i=s'+1}^{\ell-t''-1}(|\rho_i|-1)-\max_{s'+1 \leq i\leq \ell-t''-1}\big\{|\rho_i|-1\big\}-1 & \text{by~\eqref{ineq.sumnbnrpp.3.7},}\\
\nonumber
=\: &n_b+n_r+\sum_{i=1}^{\ell-t''-1}(|\rho_i|-1)-\max_{s'+1 \leq i\leq \ell-t''-1}\big\{|\rho_i|-1\big\}-2\\
\nonumber
\leq\: &n_b+n_r+\sum_{i=1}^{\ell-1}(|\rho_i|-1)-2\\
\label{proof.prop-gentype1-B2.2}
<\: &n_b+n_r+\sum_{i=1}^{\ell}\left(\left|\rho_{i}\right|-1\right)-\max_{1\leq i\leq \ell}\big\{\left|\rho_{i}\right|-1\big\}-1\\
\nonumber
=\:&\nu\big((\rho_1,\rho_2,...,\rho_{\ell}),n_b,n_r\big),
\end{align}
where~\eqref{proof.prop-gentype1-B2.2} follows from the fact that $\rho_\ell$ was the largest $r$-pile at the beginning.

\item Suppose $\mathscr{R}$ places an $r$ chip on a $b$-singleton. Then, $t\geq 1$ and another long $r$-pile was created, namely a $(b,r)$-pile. The move goes to $\mathscr{B}$, who captures all $b$-piles and discards all their prisoners, as per steps 1 and 2 of $\strat$. Therefore, there are $\ell-t''-s'$ long piles on the board, which are all long $r$-piles. As such, $\mathbb{B}$ is of generalized type I and it is $\mathscr{B}$'s turn to play. Moreover, we have $\mathscr{B}=(m_b'',0)$ and $\mathscr{R}=(n_b'', n_r'')$, where~\eqref{ineq.mbpp.3.7}, \eqref{ineq.nbpp.3.7}, \eqref{ineq.nrpp.3.7} and~\eqref{ineq.sumnbnrpp.3.7} are satisfied. 

As in Case 1, we have $m_b''>0$. If $n_r'' = 0$, then $\mathscr{B}$ has a winning strategy by Proposition~\ref{prop-gentype1-B1}. Otherwise, then $n_r'' > 0$ and the first part of~\eqref{eq.prop-gentype1-B2} is satisfied by $m_b''$ and $n_r''$. Moreover, we have
\begin{align}
\nonumber
m_b''>\: &m_b-1 & \text{by~\eqref{ineq.mbpp.3.7},}\\
\nonumber
>\: &n_r+\sum_{i=1}^{\ell}|\rho_i|_r-\max_{1\leq i\leq \ell}\big\{|\rho_i|_r\big\}-1 & \text{by~\eqref{eq.prop-gentype1-B2},}\\
\label{proof.prop-gentype1-B2.3}
=\: &n_r+\sum_{i=1}^{\ell-1}|\rho_i|_r-1\\
\nonumber
\geq\: &n_r +\sum_{i=1}^{\ell-t''-1}|\rho_i|_r-1 +|(b,r)|_r-\max\left\{|(b,r)|_r,\max_{s'+1\leq i\leq \ell-t''-1}\big\{|\rho_i|_r\big\}\right\}\\
\nonumber
=\: &n_r+\sum_{i=1}^{s'}|\rho_i|_r-1+\sum_{i=s'+1}^{\ell-t''-1}|\rho_{i}|_r+|(b,r)|_r-\max\left\{|(b,r)|_r,\max_{s'+1\leq i\leq \ell-t''-1}\big\{|\rho_i|_r\big\}\right\} \\
\nonumber
\geq\: &n_r''+\sum_{i=s'+1}^{\ell-t''-1}|\rho_{i}|_r+|(b,r)|_r-\max\left\{|(b,r)|_r,\max_{s'+1\leq i\leq \ell-t''-1}\big\{|\rho_i|_r\big\}\right\} & \text{by~\eqref{ineq.nrpp.3.7},}
\end{align}
where~\eqref{proof.prop-gentype1-B2.3} follows from the fact that $\rho_\ell$ was the largest $r$-pile at the beginning. Therefore, \eqref{eq.prop-gentype1-B2} is satisfied by $m_b''$, $n_r''$ and $(\rho_{s'+1},...,\rho_{\ell-t''-1},(b,r))$. Finally, we have
\begin{align}
\nonumber
& \nu\big((\rho_{s'+1},...,\rho_{\ell-t''-1},(b,r)),n_b'',n_r''\big)\\
\nonumber
=\: & n_b''+n_r''+\sum_{i=s'+1}^{\ell-t''-1}(|\rho_{i}|-1)+(|(b,r)|-1)
-\max\left\{|(b,r)|-1,\max_{s'+1\leq i\leq \ell-t''-1}\big\{|\rho_i|-1\big\}\right\}-1\\
\nonumber
\leq\: &n_b+n_r+\sum_{i=1}^{s'}(|\rho_i|-1)-1 +\sum_{i=s'+1}^{\ell-t''-1}(|\rho_{i}|-1)
+(|( b,r)|-1) \\
\nonumber
&-\max\left\{|(b,r)|-1,\max_{s'+1\leq i \leq \ell-t''-1}\big\{|\rho_i|-1\big\}\right\}-1 & \text{by~\eqref{ineq.sumnbnrpp.3.7},}\\
\nonumber
=\: &n_b+n_r+\sum_{i=1}^{\ell-t''-1}(|\rho_i|-1)+(|( b,r)|-1)-\max\left\{|(b,r)|-1,\max_{s'+1\leq i\leq \ell-t''-1}\big\{|\rho_i|-1\big\}\right\}-2\\
\nonumber
\leq\: &n_b+n_r+\sum_{i=1}^{\ell-1}(|\rho_i|-1)-2\\
\label{proof.prop-gentype1-B2.4}
<\: &n_b+n_r+\sum_{i=1}^{\ell}(|\rho_i|-1)-\max_{1\leq i\leq \ell}\big\{|\rho_i|-1\big\}-1\\
\nonumber
=\: &\nu\big((\rho_1,\rho_2,...,\rho_{\ell}),n_b,n_r\big),
\end{align}
where~\eqref{proof.prop-gentype1-B2.4} follows from the fact that $\rho_\ell$ was the largest $r$-pile at the beginning.

\item Suppose $\mathscr{R}$ places an $r$ chip on a long $b$-pile, say $\beta_1$ without loss of generality. Then, another long $r$-pile was created, namely a $\langle\beta_1,(r)\rangle$-pile. The move goes to $\mathscr{B}$, who captures all $b$-piles and discards all their prisoners, as per steps 1 and 2 of $\strat$.
Therefore, there are $\ell-t''-s'$ long piles on the board, which are all long $r$-piles. As such, $\mathbb{B}$ is of generalized type I and it is $\mathscr{B}$'s turn to play. Moreover, we have $\mathscr{B}=(m_b'',0)$ and $\mathscr{R}=(n_b'',n_r'')$, where
\begin{align}
\label{ineq.mbpp.3.7.case3}
m_b''&\geq m_b-1+\sum_{i=2}^{t'+t''+1}|\beta_i|_b, \\
\label{ineq.nbpp.3.7.case3}
n_b'' &\leq n_b-t-t'-t''+\sum_{i=1}^{s'}|\rho_i|_b , \\
\label{ineq.nrpp.3.7.case3}
n_r'' &\leq n_r+\sum_{i=1}^{s'}|\rho_i|_r -1 , \\
\label{ineq.sumnbnrpp.3.7.case3}
n_b''+n_r'' &\leq n_b+n_r-t-t'-t''+\sum_{i=1}^{s'}(|\rho_i|-1) - 1.
\end{align}

First, note that by~\eqref{eq.prop-gentype1-B2} and~\eqref{ineq.mbpp.3.7.case3}, we have
$$
m_b''\geq m_b-1+\sum_{i=2}^{t'+t''+1}|\beta_i|_b
\geq m_b-1 > 0.
$$
If $n_r'' = 0$, then $\mathscr{B}$ has a winning strategy by Proposition~\ref{prop-gentype1-B1}. Otherwise, then $n_r'' > 0$ and the first part of~\eqref{eq.prop-gentype1-B2} is satisfied by $m_b''$ and $n_r''$. Moreover, we have
\begin{align}
\nonumber
m_b''\geq\: &m_b-1 & \text{by~\eqref{ineq.mbpp.3.7.case3},} \\
\nonumber
>\: &n_r+\sum_{i=1}^{\ell}|\rho_i|_r-\max_{1\leq i\leq \ell}\big\{|\rho_i|_r\big\}-1 & \text{by~\eqref{eq.prop-gentype1-B2},}\\
\label{proof.prop-gentype1-B2.5}
=\: &n_r+\sum_{i=1}^{\ell-1}|\rho_i|_r-1\\
\nonumber
\geq\: &n_r+\sum_{i=1}^{\ell-t''-1}|\rho_i|_r-1+|\langle \beta_1,(r)\rangle|_r-\max\left\{|\langle \beta_1,(r)\rangle|_r,\max_{s'+1\leq i\leq \ell-t''-1}\big\{|\rho_i|_r\big\}\right\}\\
\nonumber
=\: &n_r+\sum_{i=1}^{s'}|\rho_i|_r-1+\sum_{i=s'+1}^{\ell-t''-1}|\rho_i|_r+|\langle \beta_1,(r)\rangle|_r
-\max\left\{|\langle \beta_1,(r)\rangle|_r,\max_{s'+1\leq i\leq \ell-t''-1}\big\{|\rho_i|_r\big\}\right\} \\
\nonumber
\geq\: &n_r''+\sum_{i=s'+1}^{\ell-t''-1}|\rho_i|_r+|\langle \beta_1,(r)\rangle|_r
-\max\left\{|\langle \beta_1,(r)\rangle|_r,\max_{s'+1\leq i\leq \ell-t''-1}\big\{|\rho_i|_r\big\}\right\} & \text{by~\eqref{ineq.nrpp.3.7.case3},}
\end{align}
where~\eqref{proof.prop-gentype1-B2.5} follows from the fact that $\rho_\ell$ was the largest $r$-pile at the beginning.
Therefore, \eqref{eq.prop-gentype1-B2} is satisfied by $m_b''$, $n_r''$ and $(\rho_{s'+1},...,\rho_{\ell-t''-1},\langle\beta_1,(r)\rangle)$. Finally, we have
\begin{align}
\nonumber
& \nu\big((\rho_{s'+1},...,\rho_{\ell-t''-1},\langle\beta_1,(r)\rangle),n_b'',n_r''\big)\\
\nonumber
=\: & n_b''+n_r''+\sum_{i=s'+1}^{\ell-t''-1}\left(\left|\rho_{i}\right|-1\right) + (|\langle \beta_1,(r)\rangle|-1) \\
\nonumber
&-\max\left\{|\langle \beta_1,(r)\rangle|-1,\max_{s'+1\leq i\leq \ell-t''-1}\big\{|\rho_i|-1\big\}\right\}-1\\
\nonumber
\leq\: &n_b+n_r+\sum_{i=1}^{s'}(|\rho_i|-1)-1+\sum_{i=s'+1}^{\ell-t''-1}\left(\left|\rho_{i}\right|-1\right) + (|\langle\beta_1,(r)\rangle|-1) & \text{by~\eqref{ineq.sumnbnrpp.3.7.case3},}\\
\nonumber
&-\max\left\{|\langle\beta_1,(r)\rangle|-1,\max_{s'+1\leq i\leq \ell-t''-1}\big\{|\rho_i|-1\big\}\right\}-1\\
\nonumber
=\: &n_b+n_r+\sum_{i=1}^{\ell-t''-1}\left(\left|\rho_{i}\right|-1\right)+ (|\langle\beta_1,(r)\rangle|-1)\\
\nonumber
&-\max\left\{|\langle\beta_1,(r)\rangle|-1,\max_{s'+1\leq i\leq \ell-t''-1}\big\{|\rho_i|-1\big\}\right\}-2\\
\nonumber
\leq\: &n_b+n_r+\sum_{i=1}^{\ell-1}\left(\left|\rho_{i}\right|-1\right)-2\\
\label{proof.prop-gentype1-B2.6}
<\: &n_b+n_r+\sum_{i=1}^{\ell}\left(\left|\rho_{i}\right|-1\right)-\max_{1\leq i\leq \ell}\big\{|\rho_i|-1\big\}-1\\
\nonumber
=\: &\nu\big((\rho_1,\rho_2,...,\rho_{\ell}),n_b,n_r\big),
\end{align}
where~\eqref{proof.prop-gentype1-B2.6} follows from the fact that $\rho_\ell$ was the largest $r$-pile at the beginning.

\item Suppose $\mathscr{R}$ places a $b$ chip on a $b$-pile. This results in a capture for $\mathscr{B}$, from which $\mathscr{B}$ gains at least one $b$ chip. The move goes to $\mathscr{B}$, who captures all $b$-piles and discards all their prisoners, as per steps 1 and 2 of $\strat$. Therefore, there are $\ell-s'-t''-1$ long piles on the board, which are all long $r$-piles. As such, $\mathbb{B}$ is of generalized type I and it is $\mathscr{B}$'s turn to play. Moreover, we have $\mathscr{B}=(m_b'',0)$ and $\mathscr{R}=(n_b'',n_r'')$, where
\begin{align}
\label{ineq.mbpp.3.7.case4}
m_b''&\geq m_b+\sum_{i=1}^{t'+t''+1}|\beta_i|_b, \\
\label{ineq.nbpp.3.7.case4}
n_b'' &\leq n_b-t-t'-t''+\sum_{i=1}^{s'}|\rho_i|_b-1 , \\
\label{ineq.nrpp.3.7.case4}
n_r'' &\leq n_r+\sum_{i=1}^{s'}|\rho_i|_r , \\
\label{ineq.sumnbnrpp.3.7.case4}
n_b''+n_r'' &\leq n_b+n_r-t-t'-t''+\sum_{i=1}^{s'}(|\rho_i|-1) - 1.
\end{align}

First, note that by~\eqref{ineq.mbpp.3.7.case4}, we have
$$m_b''\geq\: m_b+\sum_{i=1}^{t'+t''+1}|\beta_i|_b > m_b > 0 .$$
If $n_r'' = 0$, then $\mathscr{B}$ has a winning strategy by Proposition~\ref{prop-gentype1-B1}. Otherwise, then $n_r'' > 0$ and the first part of~\eqref{eq.prop-gentype1-B2} is satisfied by $m_b''$ and $n_r''$. Moreover, we have
\begin{align}
\nonumber
m_b''>\: &m_b & \text{by~\eqref{ineq.mbpp.3.7.case4},}\\
\nonumber
>\: &n_r+\sum_{i=1}^{\ell}|\rho_i|_r-\max_{1\leq i\leq \ell}\big\{|\rho_i|_r\big\} & \text{by~\eqref{eq.prop-gentype1-B2},}\\
\label{proof.prop-gentype1-B2.7}
=\: &n_r+\sum_{i=1}^{\ell-1}|\rho_i|_r\\
\nonumber
\geq\: &n_r+\sum_{i=1}^{\ell-t''-1}|\rho_i|_r-\max_{s'+1\leq i \leq \ell-t''-1}\big\{|\rho_i|_r\big\}\\
\nonumber
=\: &n_r+\sum_{i=1}^{s'}|\rho_i|_r+ \sum_{i=s'+1}^{\ell-t''-1}|\rho_i|_r-\max_{s'+1\leq i\leq \ell-t''-1}\big\{|\rho_i|_r\big\} \\
\nonumber
\geq\: &n_r'' + \sum_{i=s'+1}^{\ell-t''-1}|\rho_i|_r-\max_{s'+1\leq i\leq \ell-t''-1}\big\{|\rho_i|_r\big\} & \text{by~\eqref{ineq.nrpp.3.7.case4},}
\end{align}
where~\eqref{proof.prop-gentype1-B2.7} follows from the fact that $\rho_\ell$ was the largest $r$-pile at the beginning.
Therefore, \eqref{eq.prop-gentype1-B2} is satisfied by $m_b''$, $n_r''$ and $(\rho_{s'+1},...,\rho_{\ell-t''-1})$.

Finally, we have
\begin{align}
\nonumber
& \nu\big((\rho_{s'+1},...,\rho_{\ell-t''-1}),n_b'',n_r''\big)\\
\nonumber
=\: & n_b''+n_r''+\sum_{i=s'+1}^{\ell-t''-1}(|\rho_i|-1)-\max_{s'+1\leq i\leq \ell-t''-1}\big\{|\rho_i|-1\big\}-1\\
\nonumber
\leq\: &n_b+n_r+\sum_{i=1}^{s'}(|\rho_i|-1)-1+\sum_{i=s'+1}^{\ell-t''-1}(|\rho_i|-1)-\max_{s'+1\leq i\leq \ell-t''-1}\big\{|\rho_i|-1\big\}-1 & \text{by~\eqref{ineq.sumnbnrpp.3.7.case4},} \\
\nonumber
=\: &n_b+n_r+\sum_{i=1}^{\ell-t''-1}(|\rho_i|-1)-\max_{s'+1\leq i\leq \ell-t''-1}\big\{|\rho_i|-1\big\}-2\\
\nonumber
\leq\: &n_b+n_r+\sum_{i=1}^{\ell-1}(|\rho_i|-1)-2\\
\label{proof.prop-gentype1-B2.8}
<\: &n_b+n_r+\sum_{i=1}^{\ell}(|\rho_i|-1)-\max_{1\leq i\leq \ell}\big\{|\rho_i|-1\big\}-1\\
\nonumber
=\: &\nu\big((\rho_1,\rho_2,...,\rho_{\ell}),n_b,n_r\big),
\end{align}
where~\eqref{proof.prop-gentype1-B2.8} follows from the fact that $\rho_\ell$ was the largest $r$-pile at the beginning.
\end{enumerate}
\end{proof}

\begin{prop}
\label{prop-gentype1-R1}
Let $\mathbb{B}= (k_e,k_r,k_b,\ell,0)$ be of generalized type I, $\mathscr{B}=\left(m_b, m_r\right)$, $\mathscr{R}=\left(n_b, n_r\right)$, and assume that $\mathscr{B}$ is the active player. If $m_b=0$
when $\mathscr{B}$'s turn starts, then $\mathscr{R}$ wins by applying $\strat$ at every round.
\end{prop}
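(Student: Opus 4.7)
My plan is to mirror the structure of the proofs of Propositions~\ref{prop-typeI-B} and~\ref{prop-typeI-R} and proceed by induction on $m_b + m_r$ (equivalently on $m_r$, since $m_b=0$ will be maintained throughout). For the base case $m_r = 0$, player $\mathscr{B}$ starts their round with no chips at all; because $\mathscr{R}$ is applying $\strat$---which by definition forbids donations to the opponent---no chip appears in $\mathscr{B}$'s possession, and $\mathscr{B}$ is eliminated, so $\mathscr{R}$ wins.

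For the inductive step, the key observation is that with $m_b = 0$, $\mathscr{B}$ can only ever play $r$ chips. Consulting Theorem~\ref{thm:SameActive}, the only continuation moves available to $\mathscr{B}$ are placing an $r$ chip on an empty pile or on a $b$-pile; consulting Theorem~\ref{thm:DifferentActive}, the only round-ending move is placing an $r$ chip on an $r$-pile, which triggers a capture for $\mathscr{R}$. Hence $\mathscr{B}$'s round must end in one of two ways: either (a) $\mathscr{B}$ exhausts their $r$ chips during continuation moves and is eliminated at the start of the next turn (so $\mathscr{R}$ wins immediately), or (b) $\mathscr{B}$ ends by capturing for $\mathscr{R}$. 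In case (b), the captured pile has two $r$ chips on top after $\mathscr{B}$'s play, so after discarding one chip under the Capture Rule, $\mathscr{R}$ still holds at least one $r$ chip and can therefore apply $\strat$.

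It then remains to verify the three invariants needed to apply the inductive hypothesis after $\mathscr{R}$'s round is complete: the board is again of generalized type I; $\mathscr{B}$ still has $m_b'' = 0$; and $m_b'' + m_r'' < m_b + m_r$. The first is immediate because no $b$ chip is ever placed during the whole exchange ($\mathscr{B}$ has none, and $\mathscr{R}$'s $\strat$ places only $r$ chips), so no long $b$-pile can appear on the board. The second holds because applying $\strat$ prohibits $\mathscr{R}$ from donating to $\mathscr{B}$, so $\mathscr{B}$ cannot acquire any $b$ chip. The third holds because $\mathscr{B}$ necessarily spent at least one $r$ chip during their round. The induction then closes. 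The argument is essentially mechanical; the only subtlety I anticipate is confirming that the invariants $m_b = 0$ and ``no long $b$-pile'' are truly preserved along every branch of $\mathscr{B}$'s play, but this drops out as soon as one observes that no $b$ chip ever touches the board during the exchange.
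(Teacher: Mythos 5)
Your argument is correct, but it closes the induction differently from the paper. The paper's proof of Proposition~\ref{prop-gentype1-R1} is not an induction on $\mathscr{B}$'s chip count at all: it first disposes of $\ell\in\{0,1\}$ by citing Proposition~\ref{prop-typeI-R}, and then, for $\ell\geq 2$, analyzes a single round exactly as you do ($\mathscr{B}$ can only place $r$ chips on empty piles or $b$-singletons and must end by feeding $\mathscr{R}$ a capture or being eliminated), after which $\mathscr{R}$'s application of $\strat$ sweeps all $r$-piles and leaves a board with at most one long pile. At that point the board is of \emph{type I} with $m_b=0\leq n_r''$, so Theorem~\ref{thm-type1} finishes the job in one step. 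You instead keep the weaker invariant ``generalized type I, $m_b=0$, strictly fewer chips for $\mathscr{B}$'' and recurse, which is also valid: your base case ($\mathscr{B}$ starts with no chips and $\strat$ forbids donation) and your invariant checks (no $b$ chip ever reaches the board, so no long $b$-pile can form; $m_r$ strictly decreases because $\mathscr{B}$ must spend an $r$ chip and $\mathscr{R}$ never donates or triggers a capture for $\mathscr{B}$) all hold. The trade-off is that the paper's reduction is shorter because it exploits the already-established type I characterization and needs only one round of analysis, whereas your version is self-contained within the generalized type I setting but re-proves, round by round, what Theorem~\ref{thm-type1} already delivers; the move enumeration and the observation that $\mathscr{R}$ retains at least one guard after the forced capture are the same in both.
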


\begin{proof}
Suppose $\mathbb{B}=(k_e,k_r,k_b,\ell,0)$ is of generalized type I. If $\ell\in\{0,1\}$, then $\mathscr{R}$ wins by Proposition~\ref{prop-typeI-R}. For the rest of the proof, assume $\ell \geq 2$.

If $m_b=m_r=0$, then $\mathscr{B}$ is out of chips, they are eliminated and $\mathscr{R}$ wins. Otherwise, $\mathscr{B}$ starts their round by making a sequence of $t+t'\geq 0$ moves, that consist of placing (refer to Theorem~\ref{thm:SameActive}):
$$
\left\{
	\begin{array}{l}
		\text{$t$ $r$ chips on $t$ empty piles,}\\
		\text{and $t'$ $r$ chips on $t'$ $b$-singletons.}
	\end{array}
\right.
$$
The state of the board $\mathbb{B}$ is now the following:
$$\mathbb{B}=(k_e-t,k_r+t,k_b-t',\ell+t',0). $$
Let us denote the $t'$ $(b,r)$-piles that were created by $\rho_{\ell+1},\rho_{\ell+2},\ldots,\rho_{\ell+t'}$, respectively.

We have $\mathscr{B}=(0,m_r')$, where $m_r'\leq m_r-t-t'$. If $m_r'=0$, then $\mathscr{B}$ is out of chips, they are eliminated and $\mathscr{R}$ wins. Otherwise, $\mathscr{B}$ must end their round by placing an $r$ chip on an $r$-pile (refer to Theorem~\ref{thm:DifferentActive}).

Suppose $\mathscr{B}$ places an $r$ chip on an $r$-pile. This results in a capture for $\mathscr{R}$, from which $\mathscr{R}$ gains at least one $r$ chip. The move goes to $\mathscr{R}$, who captures all $r$-piles and discards all their prisoners, as per steps 1 and 2 of $\strat$. Then, as per Step 3 of $\strat$, $\mathscr{R}$ places an $r$ chip on a $b$-singleton if there is one, or on an empty pile otherwise. In both cases, $\mathbb{B}$ is of type I and it is $\mathscr{B}$'s turn to play. Moreover, we have $\mathscr{B}=(0, m_r'')$ and $\mathscr{R}=(0, n_r'')$ where 
$$m_r''\leq m_r-t-t'-1 \qquad \textrm{and} \qquad n_r'' \geq n_r+\sum_{i=1}^{\ell+t'}|\rho_i|_r.$$
Since
$$n_r''\geq n_r+\sum_{i=1}^{\ell+t'}|\rho_i|_r \geq 0=m_b,$$
then, by Theorem~\ref{thm-type1}, $\mathscr{R}$ wins by applying $\strat$ at every round.
\end{proof}

\begin{prop}
\label{prop-gentype1-R}
Let $\mathbb{B}= (k_e,k_r,k_b,\ell,0)$ be of generalized type I, $\mathscr{B}=\left(m_b, m_r\right)$, $\mathscr{R}=\left(n_b, n_r\right)$, and assume that $\mathscr{B}$ is the active player. If
\begin{align}
\label{eq.prop-gentype1-R}
n_r>0 \qquad \textrm{and} \qquad m_b \leq n_r+\sum_{i=1}^{\ell}\left|\rho_{i}\right|_r-\max_{1\leq i\leq \ell}\big\{\left|\rho_{i}\right|_r\big\}
\end{align}
when $\mathscr{B}$'s turn starts, then $\mathscr{R}$ wins by applying $\strat$ at every round.
\end{prop}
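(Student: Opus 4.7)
The plan is to reduce Proposition~\ref{prop-gentype1-R} to Theorem~\ref{thm-type1} via a single round of $\mathscr{R}$'s strategy $\strat$. The key structural observation is that after one round in which $\mathscr{R}$ plays $\strat$, the board is automatically of type I. Indeed, step~1 of $\strat$ captures every $r$-pile currently on the board, so no long $r$-pile survives; step~3 then places exactly one fresh $r$ chip, creating at most one new long $r$-pile. Moreover, during their preceding round, $\mathscr{B}$ can create at most one long $b$-pile (only the ending moves ``$b$ on an $r$-singleton'' and ``$b$ on a long $r$-pile'' do so), and that long $b$-pile has length $\geq 2$, hence is strictly larger than any $b$-singleton, so step~3 of $\strat$ places the fresh $r$ chip on top of it and turns it back into a long $r$-pile. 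Hence after the exchange $h''=0$ and $\ell''\in\{0,1\}$, so the board is of type I.

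With this reduction in hand, I first dispose of two easy cases: if $m_b=0$, Proposition~\ref{prop-gentype1-R1} closes the argument; if $\ell\leq 1$, the board is already of type I and the sum-minus-max in~\eqref{eq.prop-gentype1-R} collapses to $0$, leaving $m_b\leq n_r$, which is the hypothesis of Proposition~\ref{prop-typeI-R}. Assuming then $m_b\geq 1$ and $\ell\geq 2$, I would parametrise $\mathscr{B}$'s round exactly as in the proof of Proposition~\ref{prop-gentype1-B2}: a sequence of $t+t'+s$ non-ending moves (refer to Theorem~\ref{thm:SameActive}) followed by one of the four ending moves from Theorem~\ref{thm:DifferentActive}, namely (1)~$b$ on an empty pile, (2)~$b$ on an $r$-singleton, (3)~$b$ on a long $r$-pile, or (4)~$r$ on an $r$-pile. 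If $\mathscr{B}$ runs out of chips partway, they are eliminated and $\mathscr{R}$ wins. Otherwise, $\mathscr{R}$ replies with $\strat$, and in each of the four cases I verify that the resulting state satisfies the hypothesis of Theorem~\ref{thm-type1} for $\mathscr{R}$, namely $m_b''\leq n_r''$. The accounting carefully tracks the $r$ chips $\mathscr{R}$ gains: by the reasoning behind Proposition~\ref{prop:XcapturesXpile}, each long $r$-pile $\pi$ present at the start of $\mathscr{R}$'s round contributes a net $+|\pi|_r$ to $n_r$, each $r$-singleton contributes $0$, and step~3 of $\strat$ costs one $r$ chip.

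The main obstacle will be case~3, in which $\mathscr{B}$ places their ending $b$ chip on some long $r$-pile $\rho_j$, simultaneously removing $|\rho_j|_r$ from the pool that $\mathscr{R}$ can capture in step~1 and forcing $\mathscr{R}$ to ``waste'' their step-3 chip reassembling a long $r$-pile on top of $\rho_j$ rather than capturing its contents. The slack ``$-\max_{1\leq i\leq\ell}|\rho_i|_r$'' in~\eqref{eq.prop-gentype1-R} is designed precisely to absorb this adversarial choice: even if $\mathscr{B}$ targets a $\rho_j$ maximising $|\rho_j|_r$, the residual sum $\sum_{i\neq j}|\rho_i|_r$ is still at least $\sum_i|\rho_i|_r-\max_i|\rho_i|_r$, which by hypothesis dominates $m_b-n_r$. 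The bookkeeping will mirror the telescoping chains~\eqref{proof.prop-gentype1-B2.5}--\eqref{proof.prop-gentype1-B2.6} in the proof of Proposition~\ref{prop-gentype1-B2}, with the direction of inequality reversed; the other three ending cases are strictly easier since $\mathscr{B}$ either does not remove a long $r$-pile from the sum or does not force the step-3 chip to be spent outside a direct capture.
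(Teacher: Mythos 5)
Your proposal is correct and follows essentially the same route as the paper: a single round in which $\mathscr{R}$ applies $\strat$ collapses the board to type~I (all $r$-piles captured, the at most one long $b$-pile created by $\mathscr{B}$ converted back into the unique long $r$-pile by the step-3 chip), after which Theorem~\ref{thm-type1} finishes via the same four-way case split on $\mathscr{B}$'s ending move, with the $-\max_{1\leq i\leq\ell}|\rho_i|_r$ slack absorbing the worst case where $\mathscr{B}$ caps the largest $r$-pile. The separate preliminary treatments of $m_b=0$ and $\ell\leq 1$ are harmless but unnecessary, since the uniform argument (as in the paper) already covers them.
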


\begin{proof}
Suppose $\mathbb{B}=(k_e,k_r,k_b,\ell,0)$ is of generalized type I.

If $m_b=m_r=0$, then $\mathscr{B}$ is out of chips, they are eliminated and $\mathscr{R}$ wins. Otherwise, $\mathscr{B}$ starts their round by making a sequence of $t+t'+s\geq 0$ moves, that consist of placing (refer to Theorem~\ref{thm:SameActive}):
$$
\left\{
	\begin{array}{l}
		\text{$t$ $r$ chips on $t$ empty piles,}\\
		\text{$t'$ $r$ chips on $t'$ $b$-singletons}\\
		\text{and $s$ $b$ chips on $s$ $b$-singletons.}
	\end{array}
\right.
$$

The state of the board $\mathbb{B}$ is now the following:
$$\mathbb{B}=(k_e-t+s,k_r+t,k_b-t'-s,\ell+t',0). $$
Let us denote the $t'$ $(b,r)$-piles that were created by $\rho_{\ell+1},\rho_{\ell+2},\ldots,\rho_{\ell+t'}$, respectively.

We have $\mathscr{B}=(m_b,m_r')$, where $m_r'\leq m_r-t-t'$. If $m_b=m_r'=0$, then $\mathscr{B}$ is out of chips, they are eliminated and $\mathscr{R}$ wins. Otherwise, $\mathscr{B}$ can end their round in four different ways (refer to Theorem \ref{thm:DifferentActive}): (1) placing a $b$ chip on an empty pile, (2) placing a $b$ chip on an $r$-singleton, (3) placing a $b$ chip on a long $r$-pile, or (4) placing an $r$ chip on an $r$-pile.
\begin{enumerate}
\item Suppose $\mathscr{B}$ places a $b$ chip on an empty pile. Then, a $b$-singleton is created and no long pile is created. The move goes to $\mathscr{R}$, who captures all $r$-piles and discards all their prisoners, as per steps 1 and 2 of $\strat$. Then, as per Step 3 of $\strat$, $\mathscr{R}$ places $r$ chip on a $b$-singleton. Therefore, there is only one long pile on the board, which is a $(b,r)$-pile. As such, $\mathbb{B}$ is of type I and it is $\mathscr{B}$'s turn to play. Moreover, we have $\mathscr{B}=(m_b'',m_r'')$ and $\mathscr{R}=(0,n_r'')$ where $m_b'' = m_b-1$, and
$$ m_r'' \leq m_r-t-t',\qquad \textrm{and} \qquad n_r'' \geq n_r+\sum_{i=1}^{\ell+t'}|\rho_i|_r-1.$$
Since 
$$n_r''\geq n_r+\sum_{i=1}^{\ell+t'}|\rho_i|_r-1
\geq n_r+\sum_{i=1}^{\ell}|\rho_{i}|_r-1
\geq n_r+\sum_{i=1}^{\ell}|\rho_i|_r-\max_{1\leq i\leq \ell}\big\{\left|\rho_{i}\right|_r\big\}
\geq m_b
> m_b''
$$
by~\eqref{eq.prop-gentype1-R}, then, by Theorem~\ref{thm-type1}, $\mathscr{R}$ wins by applying $\strat$ at every round.

\item Suppose $\mathscr{B}$ places a $b$ chip on an $r$-singleton. Thus, a long $b$-pile is created, namely an $(r,b)$-pile. The move goes to $\mathscr{R}$, who captures all $r$-piles and discards all their prisoners, as per steps 1 and 2 of $\strat$. Then, as per Step 3 of $\strat$, $\mathscr{R}$ places an $r$ chip on the only long $b$-pile on the board, which is an $(r,b)$-pile. The board contains exactly one long pile, which is an $(r,b,r)$-pile. As such, $\mathbb{B}$ is of type I and it is $\mathscr{B}$'s turn to play. The rest of the argument for this case is identical to the one for Case 1.

\item Suppose $\mathscr{B}$ places a $b$ chip on a long $r$-pile, say $\rho_1$ without loss of generality. Thus, the long $b$-pile $\langle \rho_1,(b)\rangle$ is created. The move goes to $\mathscr{R}$, who captures all $r$-piles and discards all their prisoners, as per steps 1 and 2 of $\strat$. Then, as per Step 3 of $\strat$, $\mathscr{R}$ places an $r$ chip on the long $b$-pile $\langle \rho_,(b)\rangle$. The board contains exactly one long pile, which is a long $r$-pile. As such, $\mathbb{B}$ is of type I and it is $\mathscr{B}$'s turn to play. Moreover, we have $\mathscr{B}=(m_b'',m_r'')$ and $\mathscr{R}=(0,n_r'')$ where $m_b''=m_b-1$, and
$$m_r''\leq m_r-t-t' \qquad \textrm{and} \qquad n_r'' \geq n_r+\sum_{i=2}^{\ell+t'}|\rho_i|_r-1.$$
Since
\begin{align*}
n_r''\geq\: &n_r+\sum_{i=2}^{\ell+t'}|\rho_i|_r-1 \\
\geq\: &n_r+\sum_{i=1}^{\ell}|\rho_i|_r-\max_{1\leq i\leq \ell+t'}\big\{\left|\rho_{i}\right|_r\big\} -1 & \text{since $\max_{1\leq i\leq \ell+t''}\big\{\left|\rho_{i}\right|_r\big\} \geq |\rho_1|_r$,}\\
=\: &n_r+\sum_{i=1}^{\ell}|\rho_i|_r-\max\left\{\max_{1\leq i\leq \ell}\big\{\left|\rho_{i}\right|_r\big\},\max_{\ell+1\leq i\leq \ell+t'}\big\{\left|\rho_{i}\right|_r\big\}\right\} -1 \\
=\: &n_r+\sum_{i=1}^{\ell}|\rho_i|_r-\max\left\{\max_{1\leq i\leq \ell}\big\{\left|\rho_{i}\right|_r\big\},1\right\} -1 \\
=\: &n_r+\sum_{i=1}^{\ell}|\rho_i|_r-\max_{1\leq i\leq \ell}\big\{\left|\rho_{i}\right|_r\big\} -1 \\
\geq\: &m_b -1 & \text{by~\eqref{eq.prop-gentype1-R},}\\
=\: &m_b'',
\end{align*}
then, by Theorem~\ref{thm-type1}, $\mathscr{R}$ wins by applying $\strat$ at every round.

\item Suppose $\mathscr{B}$ places an $r$ chip on an $r$-pile. This results in a capture for $\mathscr{R}$, from which $\mathscr{R}$ gains at least one $r$ chip. The move goes to $\mathscr{R}$, who captures all $r$-piles and discards all their prisoners, as per steps 1 and 2 of $\strat$. Then, as per Step 3 of $\strat$, $\mathscr{R}$ places an $r$ chip on a $b$-singleton if there is one, or on an empty pile otherwise. In both cases, $\mathbb{B}$ is of type I and it is $\mathscr{B}$'s turn to play. Moreover, we have $\mathscr{B}=(m_b'', m_r'')$ and $\mathscr{R}=(0, n_r'')$ where $m_b''=m_b$, and
$$m_r''\leq m_r-t-t'-1 \qquad \textrm{and} \qquad n_r'' \geq n_r+\sum_{i=1}^{\ell+t'}|\rho_i|_r.$$
Since
$$n_r''\geq n_r+\sum_{i=1}^{\ell+t'}|\rho_i|_r \geq n_r+\sum_{i=1}^{\ell}|\rho_i|_r - \max_{1\leq i\leq \ell}\big\{\left|\rho_{i}\right|_r\big\} \geq m_b = m_b''$$
by~\eqref{eq.prop-gentype1-R}, then, by Theorem~\ref{thm-type1}, $\mathscr{R}$ wins by applying $\strat$ at every round.
\end{enumerate}
\end{proof}

The following theorem is a direct consequence of Propositions~\ref{prop-gentype1-B1}, \ref{prop-gentype1-B2}, \ref{prop-gentype1-R1}, and~\ref{prop-gentype1-R}.
\begin{theorem}
\label{thm-gentype1}
Let $\mathbb{B}= (k_e,k_r,k_b,\ell,0)$ be of generalized type I, $\mathscr{B}=\left(m_b, m_r\right)$, $\mathscr{R}=\left(n_b, n_r\right)$, and assume that $\mathscr{B}$ is the active player. Then, $\mathscr{B}$ has a winning strategy if and only if  
$$m_b>0 \qquad \textrm{and} \qquad \left(n_r=0 \quad \textrm{or} \quad m_b>n_r+\sum_{i=1}^{\ell}\left|\rho_{i}\right|_r-\max_{1\leq i\leq \ell}\big\{\left|\rho_{i}\right|_r\big\}\right)$$
when $\mathscr{B}$'s turn starts. Whenever there is a winning strategy for $\mathscr{B}$ (respectively for $\mathscr{R}$), then $\strat$ is such a strategy.
\end{theorem}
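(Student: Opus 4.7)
The plan is to derive Theorem \ref{thm-gentype1} directly from Propositions \ref{prop-gentype1-B1}, \ref{prop-gentype1-B2}, \ref{prop-gentype1-R1}, and \ref{prop-gentype1-R} by a short case analysis on $m_b$ and $n_r$. Each of the four propositions already furnishes not merely the identity of the winner but also a concrete winning strategy, namely $\strat$, so essentially no new combinatorial work remains; the proof is a corollary-style assembly of these four results.

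For the ``if'' direction I would assume that $m_b > 0$ and that either $n_r = 0$ or $m_b > n_r + \sum_{i=1}^{\ell}|\rho_i|_r - \max_{1 \leq i \leq \ell}\{|\rho_i|_r\}$, and then split into two subcases. In the subcase $n_r = 0$, Proposition \ref{prop-gentype1-B1} applies directly and yields that $\mathscr{B}$ wins by applying $\strat$ at every round. In the remaining subcase the strict threshold inequality holds with $n_r > 0$, so Proposition \ref{prop-gentype1-B2} applies and again exhibits $\strat$ as a winning strategy for $\mathscr{B}$.

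For the ``only if'' direction I would argue contrapositively: suppose the displayed condition fails. Either $m_b = 0$, in which case Proposition \ref{prop-gentype1-R1} gives $\strat$ as a winning strategy for $\mathscr{R}$; or else $m_b > 0$, and then the failure must come from the second conjunct, i.e.\ $n_r > 0$ together with $m_b \leq n_r + \sum_{i=1}^{\ell}|\rho_i|_r - \max_{1 \leq i \leq \ell}\{|\rho_i|_r\}$, which is exactly the hypothesis of Proposition \ref{prop-gentype1-R}, so $\mathscr{R}$ wins with $\strat$. Since at most one player can have a winning strategy from a given state, a winning strategy for $\mathscr{R}$ precludes one for $\mathscr{B}$, completing the biconditional. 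The closing claim that $\strat$ is a winning strategy whenever either player has one follows because $\strat$ is precisely the witness strategy exhibited in each of the four propositions.

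The only point worth verifying, and the closest thing to an obstacle, is that the four propositional hypotheses really do cover every generalized type I game state without gap or overlap. Inspecting the two binary choices $m_b \in \{0, {>}0\}$ and $n_r \in \{0, {>}0\}$ together with the single threshold inequality shows that the four hypotheses partition the relevant state space, and that the ``$\mathscr{B}$ wins'' cases (Propositions \ref{prop-gentype1-B1} and \ref{prop-gentype1-B2}) coincide exactly with the states satisfying the displayed biconditional, so no case is missing and no inconsistency can arise.
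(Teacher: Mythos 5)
Your proposal is correct and matches the paper's approach exactly: the paper states Theorem~\ref{thm-gentype1} as a direct consequence of Propositions~\ref{prop-gentype1-B1}, \ref{prop-gentype1-B2}, \ref{prop-gentype1-R1}, and~\ref{prop-gentype1-R}, and your case analysis on $m_b$ and $n_r$ is precisely the intended assembly. (The only nitpick: the four hypotheses cover the state space but do not strictly partition it, since Propositions~\ref{prop-gentype1-R1} and~\ref{prop-gentype1-R} can overlap when $m_b=0$ and $n_r>0$; as you note, both conclude that $\mathscr{R}$ wins, so this is harmless.)
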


\section{Boards with Exactly One Long $r$-Pile}
\label{sec:GTII}

In this section, we analyze game states with two players and two colors when the board contains exactly one long $r$-pile. The structure of this section is similar to that of Section~\ref{sec:GTI}.

\subsection{Type II Board}
\label{subsec:typeII}

We start by studying game states where the only long piles the board contains are one long $r$-pile and one long $b$-pile.

\begin{definition}
\label{def:typeII}
The board $\mathbb{B}$ is said to be of \emph{type II} if $\mathbb{B}=(k_e,k_r,k_b,1,1)$ (refer to Figure~\ref{fig:type_ii}). We denote the long $r$-pile by $\rho$ and the long $b$-pile by $\beta$.
\end{definition}

\begin{figure}[H]
     \centering
     \begin{subfigure}[h]{0.4\textwidth}
         \centering
         \includegraphics[scale=1]{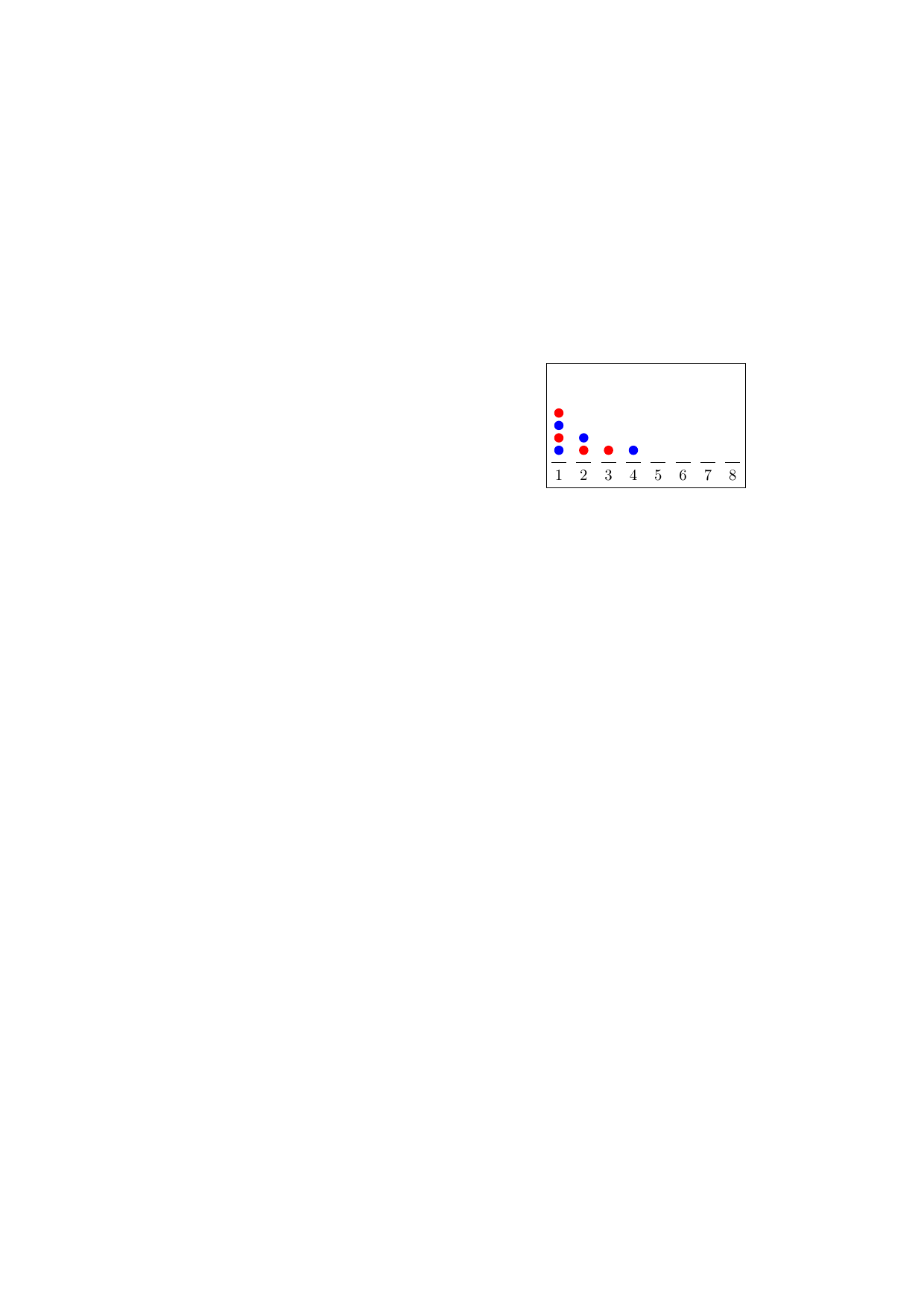}
         \label{fig:type_ii_1}
     \end{subfigure}
     \hfill
     \begin{subfigure}[h]{0.4\textwidth}
         \centering
         \includegraphics[scale=1]{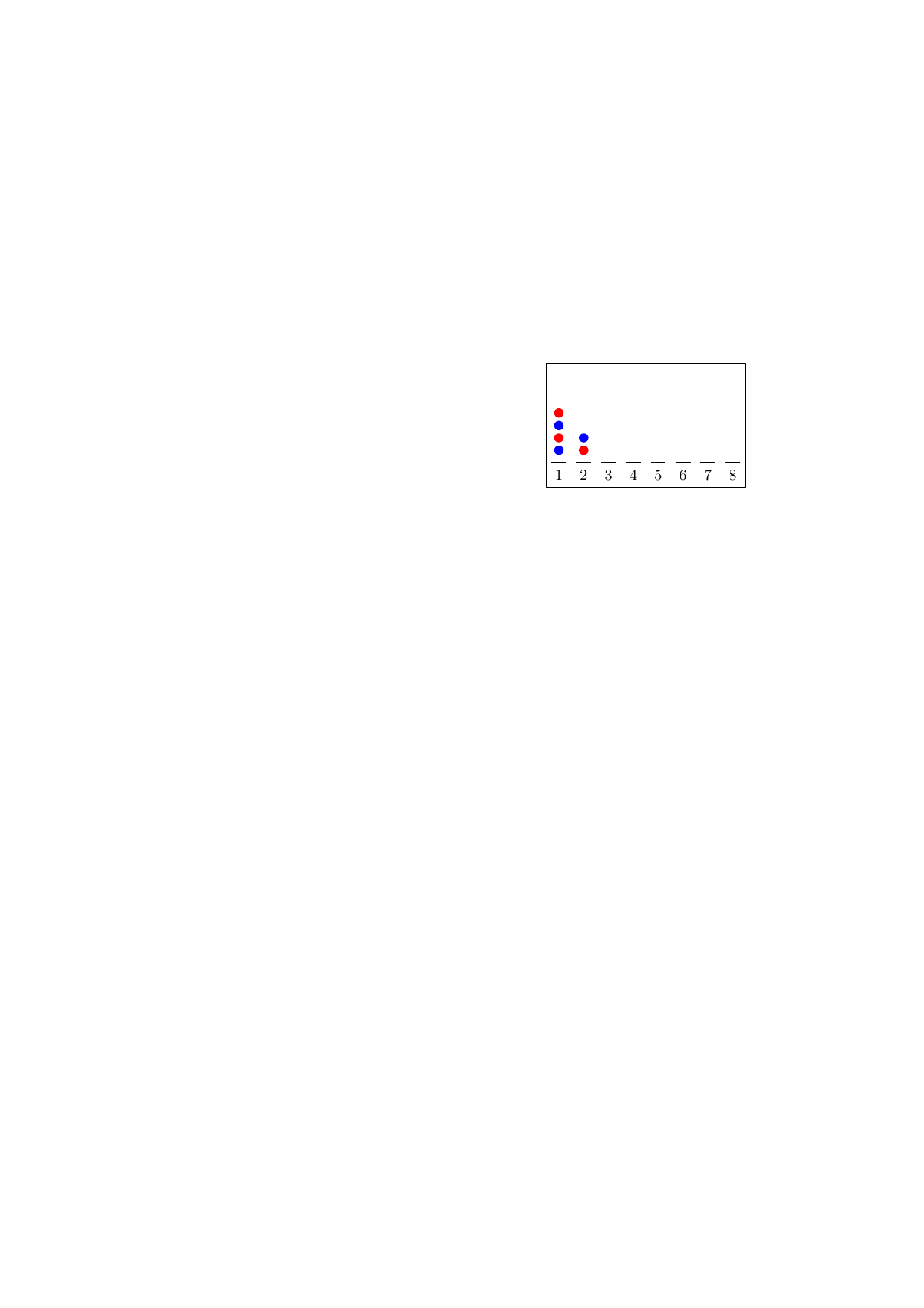}
         \label{fig:type_ii_2}
     \end{subfigure}
        \caption{Examples of type II boards with $k=8$}
        \label{fig:type_ii}
\end{figure}

The structure of this subsection is the same as the one for Subsection~\ref{subsec:gentypeI}. We first describe the states where there is a winning strategy for $\mathscr{B}$ on a type II board (refer to Proposition~\ref{prop-typeII-B}). Then we describe the states where there is a winning strategy for $\mathscr{R}$ on a type II board (refer to Corollary~\ref{cor-typeII-R}). We finally present the characterization of the winning states for the first player in Theorem~\ref{thm-type2}.

\begin{prop}
\label{prop-typeII-B}
Let $\mathbb{B}=(k_e,k_r,k_b,1,1)$ be of type II, $\mathscr{B}=\left(m_b, m_r\right)$,  $\mathscr{R}=\left(n_b, n_r\right)$, and assume that $\mathscr{B}$ is the active player. If $m_b >0$ and $m_b+|\beta|_b>n_r$ when $\mathscr{B}$'s turn starts, then $\mathscr{B}$ wins by applying $\strat$ at every round.
\end{prop}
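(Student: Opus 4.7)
The plan is to mirror the inductive structure used in Propositions~\ref{prop-typeI-B} and~\ref{prop-gentype1-B2}, inducting on $n_b+n_r$ and showing that a single round in which $\mathscr{B}$ plays $\strat$ either eliminates $\mathscr{R}$ outright or produces a game state that still satisfies the hypothesis of the present proposition (or of Theorem~\ref{thm-gentype1}) with strictly smaller $n_b+n_r$. The base case ($n_b+n_r=0$) is immediate since $\mathscr{R}$ has no chips, and after $\mathscr{B}$ applies $\strat$ it is $\mathscr{R}$'s turn with an empty stock, so $\mathscr{R}$ is eliminated.

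First I unfold $\mathscr{B}$'s $\strat$ round. By Proposition~\ref{prop:AlternatingPile} the long $b$-pile $\beta$ has alternating colors and hence contains at least one $r$ chip; so when $\mathscr{B}$ captures $\beta$ they discard an $r$ chip and gain a net of $|\beta|_b$ $b$ chips, whereas each $b$-singleton capture is net-zero in $b$ chips. After $\mathscr{B}$ discards all $r$ prisoners and places a $b$ on $\rho$ (the unique, hence largest, long $r$-pile), the state becomes $\mathscr{B}=(m_b',0)$ with $m_b'\geq m_b+|\beta|_b-1\geq n_r$, and the board is $(k_e+k_b+1,\,k_r,\,0,\,0,\,1)$ whose single long $b$-pile is $\beta'=\langle\rho,(b)\rangle$ with $|\beta'|_b=|\rho|_b+1$.

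Next I would enumerate $\mathscr{R}$'s round, exactly as in the proof of Proposition~\ref{prop-gentype1-B2}. During Theorem~\ref{thm:SameActive} moves, $\mathscr{R}$ may place $t$ $b$ chips on empty piles, $t'$ $b$ chips on $r$-singletons (each creating a new $(r,b)$ long $b$-pile), and perform $s$ self-captures of $r$-singletons. Then $\mathscr{R}$ ends the round via Theorem~\ref{thm:DifferentActive} in one of four subcases: $r$ on an empty pile, $r$ on a $b$-singleton, $r$ on a long $b$-pile, or $b$ on a $b$-pile (handing a capture to $\mathscr{B}$). For each subcase, I would track the chip counts and the pile structure to verify that $n_b''+n_r''<n_b+n_r$, that $m_b''>0$, and that the surviving board is either a generalized type I board on which Theorem~\ref{thm-gentype1} applies (the subcases in which no long $r$-pile survives), or a type II board satisfying $m_b''+|\beta''|_b>n_r''$ to which the induction hypothesis applies (the subcases in which exactly one long $r$-pile survives and exactly one long $b$-pile survives).

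The main obstacle is the subcase where $\mathscr{R}$ ends their round by placing an $r$ on a long $b$-pile after having created several long $b$-piles via the $t'$ intermediate moves: the board then has one long $r$-pile and possibly several long $b$-piles, i.e., a generalized type II configuration, for which we have no result yet. I would handle this by noticing that before $\mathscr{R}$ plays again $\mathscr{B}$ will reapply $\strat$, absorbing every remaining long $b$-pile into $\mathscr{B}$'s $b$-count and placing a single $b$ on the unique long $r$-pile, bringing us back to a type II board of the form required by the statement. The careful bookkeeping of $b$ and $r$ chips through this combined $\mathscr{R}$-then-$\mathscr{B}$ transition is delicate, and is closely analogous to inequalities~\eqref{ineq.mbpp.3.7}--\eqref{ineq.sumnbnrpp.3.7}; it is what simultaneously establishes the preservation of the winning condition $m_b''+|\beta''|_b>n_r''$ and the strict decrease of $n_b+n_r$, thereby closing the induction.
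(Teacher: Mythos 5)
You have missed the short argument: no induction and no analysis of $\mathscr{R}$'s round are needed. The paper's proof stops $\strat$ after steps 1 and 2. Capturing $\beta$ nets $\mathscr{B}$ exactly $|\beta|_b$ blue chips (as you correctly compute), each $b$-singleton capture is net zero in blue chips, and each capture hands the turn back to $\mathscr{B}$ by Theorem~\ref{thm:SameActive}. So after the captures and the discarding of prisoners, $\mathscr{B}$ is the active player at the start of a fresh turn, the only long pile left is $\rho$, hence $\mathbb{B}$ is of type I, and $\mathscr{B}=(m_b',0)$ with $m_b'\geq m_b+|\beta|_b>n_r$. Theorem~\ref{thm-type1} finishes immediately: the hypothesis $m_b+|\beta|_b>n_r$ is exactly the type I winning condition for this mid-round state, so the whole proposition is a one-round reduction to Subsection~\ref{subsec:typeI}.

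As for the induction you sketch, it is not merely ``delicate''; as framed it does not close. After $\mathscr{B}$'s full $\strat$ round the board has one long $b$-pile and no long $r$-pile, and the state at the start of $\mathscr{B}$'s next turn need not be of generalized type I or of type II, contrary to your case classification. If $\mathscr{R}$ creates $(r,b)$-piles with their $t'$ intermediate moves and then ends by placing an $r$ chip on an empty pile (or by handing $\mathscr{B}$ a capture), the board has several long $b$-piles and no long $r$-pile, which is none of the classes treated so far (generalized type I requires \emph{no} long $b$-piles); and in your ``main obstacle'' case the board is generalized type II, to which neither Theorem~\ref{thm-gentype1} nor your induction hypothesis (stated only for type II boards) applies. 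Your proposed fix --- let $\mathscr{B}$ reapply $\strat$ to absorb the extra long $b$-piles --- is the right instinct, but after that round it is again $\mathscr{R}$'s turn from a position of the same shape, so you are re-entering the very analysis you are trying to complete rather than landing in a state covered by the hypothesis. Making this rigorous would force you to strengthen the induction to essentially the statement of Proposition~\ref{prop-gentype2-B}. All of this evaporates once you notice the mid-round type I reduction above.
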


\begin{proof}
Suppose $\mathbb{B}=(k_e,k_r,k_b,1,1)$ is of type II.

Since $m_b > 0$, then $\mathscr{B}$ has at least one $b$ chip. Player $\mathscr{B}$ applies Strategy $\strat$: they capture
all $k_b+1$ $b$-piles and discard all their prisoners as per steps 1 and 2 of $\strat$.
Therefore, there is only one long pile on the board, which is the long $r$-pile $\rho$. As such, $\mathbb{B}$ is of type I and it is $\mathscr{B}$'s turn to play.
Moreover, we have $\mathscr{B}=(m_b', 0)$ and $\mathscr{R}=(n_b', n_r')$ where $m_b'\geq m_b+|\beta|_b$, $n_b' \leq n_b$ and $n_r'=n_r$.
Since
$$m_b'\geq m_b+|\beta|_b>n_r=n_r',$$
then, by Theorem~\ref{thm-type1}, $\mathscr{B}$ wins by applying $\strat$ at every round.
\end{proof}

The goal for the rest of this section is to prove that $\mathscr{R}$ can win when $\mathbb{B}$ is of type II and, $m_b = 0$ or $m_b+|\beta|_b \leq n_r$
(refer to Corollary~\ref{cor-typeII-R}).
To prove this, we break down the argument into the following cases and sub-cases:
\begin{itemize}
\item $m_b = 0$ (refer to Proposition~\ref{prop-typeII-R1}), or
\item $m_b \geq 0$ and $m_b+|\beta|_b \leq n_r$, and
\begin{itemize}
\item $\mathscr{B}$ places a chip on $\beta$ during their first round (refer to Proposition~\ref{prop-typeII-R2}), 
\item or $\mathscr{B}$ does not place any chip on $\beta$ during their first round (refer to Proposition~\ref{prop-typeII-R3}). This case relies on all previous cases.
\end{itemize}
\end{itemize}

\begin{prop}
\label{prop-typeII-R1}
Let $\mathbb{B}=(k_e,k_r,k_b,1,1)$ be of type II, $\mathscr{B}=\left(m_b, m_r\right)$, $\mathscr{R}=\left(n_b, n_r\right)$, and assume that
$\mathscr{B}$ is the active player. If $m_b = 0$ when $\mathscr{B}$'s turn starts, then $\mathscr{R}$ wins by applying $\strat$ at every round.
\end{prop}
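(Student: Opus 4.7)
The plan is to reduce to the already-proved type I case: I will show that after one round of $\mathscr{B}$ and one round of $\mathscr{R}$ applying $\strat$, the board becomes of type I with $m_b''=0 \le n_r''$, whereupon Proposition~\ref{prop-typeI-R} completes the proof by induction.

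First, I would enumerate $\mathscr{B}$'s options. Since $m_b=0$, Theorem~\ref{thm:SameActive} restricts $\mathscr{B}$'s same-active moves to placing an $r$-chip on an empty pile, on a $b$-singleton, or on $\beta$, while Theorem~\ref{thm:DifferentActive} forces $\mathscr{B}$ to close the round by placing an $r$-chip on some $r$-pile $\rho^*$. Because $\mathscr{R}$ commits to $\strat$, no $b$-chip is ever donated to $\mathscr{B}$, so $\mathscr{B}$'s blue-count stays at $0$ throughout. If $m_r=0$ as well, then $\mathscr{B}$ starts without any chips and is immediately eliminated. If $\mathscr{B}$ exhausts their $r$-chips before closing the round, the next same-active call again finds them chipless and, with no donation forthcoming, they are eliminated. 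Otherwise, $\mathscr{B}$ ends their round by playing $r$ on $\rho^*$, triggering a capture that passes the turn to $\mathscr{R}$ with at least one fresh $r$-chip in hand.

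Next I would trace $\mathscr{R}$'s execution of $\strat$. In Step~1, $\mathscr{R}$ captures every remaining $r$-pile: the original $\rho$, every $r$-singleton (including those $\mathscr{B}$ just created from empty piles), every $(b,r)$-pile created by $\mathscr{B}$ from a $b$-singleton, and $\langle\beta,(r)\rangle$ if $\mathscr{B}$ placed a chip on $\beta$. After Step~1 no $r$-pile remains, and the only possible surviving long $b$-pile is $\beta$ itself, in the sub-case where $\mathscr{B}$ did not touch it. Step~3 then places an $r$-chip on the largest $b$-pile, which is $\beta$ if $\beta$ survived (since $|\beta|\ge 2$ dominates every $b$-singleton), a $b$-singleton if $\beta$ did not survive but singletons remain, or an empty pile otherwise. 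In each of these sub-cases the resulting board carries $0$ long $b$-piles and at most $1$ long $r$-pile, hence is of type I.

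Finally, since $\mathscr{B}$ has received no blue chip at any point, I would conclude that $m_b''=0 \le n_r''$ at the start of $\mathscr{B}$'s next round, and invoke Proposition~\ref{prop-typeI-R} to finish. The delicate point will be the bookkeeping in Step~3, namely verifying that $\beta$ is always either captured in Step~1 (if $\mathscr{B}$ played on it, making it an $r$-pile) or selected by Step~3 as the largest $b$-pile (otherwise), so that in either case no long $b$-pile survives the combined round and the board genuinely lands in type I.
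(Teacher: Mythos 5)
Your proposal is correct and follows essentially the same route as the paper: enumerate $\mathscr{B}$'s forced moves (all $r$-chip placements, ending with a capture handed to $\mathscr{R}$), let $\mathscr{R}$ run $\strat$ so that after one combined round the board is of type I with $m_b''=0\le n_r''$, and conclude via the type I result (the paper cites Theorem~\ref{thm-type1}, which packages Proposition~\ref{prop-typeI-R}). The bookkeeping you flag as delicate — $\beta$ is either converted to an $r$-pile by $\mathscr{B}$ and captured in Step~1, or survives and is chosen in Step~3 as the longest $b$-pile — is exactly how the paper's case split on $t''\in\{0,1\}$ resolves it.
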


\begin{proof}
Suppose $\mathbb{B}=(k_e,k_r,k_b,1,1)$ is of type II.

Suppose $m_b=0$. If $m_r=0$, then $\mathscr{B}$ is out of chips, they are eliminated and $\mathscr{R}$ wins. Otherwise, $\mathscr{B}$ starts their round by making a sequence of $t+t'+t''\geq 0$ moves, that consist of placing (refer to Theorem~\ref{thm:SameActive}):
$$
\left\{
    \begin{array}{l}
        \textrm{$t$ $r$ chips on $t$ empty piles,}\\
        \textrm{$t'$ $r$ chips on $t'$ $b$-singletons,}\\
        \textrm{and $t''\in\{0,1\}$ $r$ chips on $\beta$.}
    \end{array}
\right.
$$
The state of the board $\mathbb{B}$ is now the following:
$$\mathbb{B}=(k_e-t,k_r+t,k_b-t',t'+t''+1,1-t''). $$
Let us denote the $t'+t''+1$ long $r$-piles by $\rho_1,\rho_2,...,\rho_{t'+t''+1}$, respectively.

We have $\mathscr{B}=(0,m_r')$ where $m_r'\leq m_r-t-t'-t''$. If $m_r'=0$, then $\mathscr{B}$ is out of chips, they are eliminated and $\mathscr{R}$ wins. Otherwise, $\mathscr{B}$ can only end their round by placing an $r$ chip on an $r$-pile (refer to Theorem \ref{thm:DifferentActive}). This results in a capture for $\mathscr{R}$, from which $\mathscr{R}$ gains at least one $r$ chip. The move goes to $\mathscr{R}$, who captures all $r$-piles and discards all their prisoners, as per steps 1 and 2 of $\strat$. Now, the board no longer contains any long $r$-piles, and contains exactly $1-t''$ long $b$-piles. As per Step 3 of $\strat$, $\mathscr{R}$ places an $r$ chip on the long $b$-pile if $1-t''=1$, on a $b$-singleton if $1-t''=0$ and $k_b-t'>0$, or on an empty pile otherwise. In all three cases, $\mathbb{B}$ is of type I and it is $\mathscr{B}$'s turn to play. Moreover, we have $\mathscr{B}=(0,m_r'')$ and $\mathscr{R}=(0,n_r'')$, where 
$$m_r''\leq m_r-t-t'-t''-1 \qquad \textrm{and} \qquad
n_r''\geq n_r+\sum_{i=1}^{t'+t''+1}|\rho_i|_r.$$
Since $n_r''\geq n_r+\sum_{i=1}^{t'+t''+1}|\rho_i|_r> n_r \geq 0 = m_b$, then, by Theorem~\ref{thm-type1}, $\mathscr{R}$ wins by applying $\strat$ at every round.
\end{proof}

\begin{prop}
\label{prop-typeII-R2}
Let $\mathbb{B}=(k_e,k_r,k_b,1,1)$ be of type II, $\mathscr{B}=\left(m_b, m_r\right)$, $\mathscr{R}=\left(n_b, n_r\right)$, and assume that
$\mathscr{B}$ is the active player. If $m_b\geq 0$ and $m_b+|\beta|_b \leq n_r$ when $\mathscr{B}$'s turn starts, and $\mathscr{B}$ places a chip on $\beta$ before the end of their current round, then $\mathscr{R}$ wins by applying $\strat$ at every round.
\end{prop}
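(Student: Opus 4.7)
The plan rests on the fact that placing a chip on $\beta$ is never a round-ending move for $\mathscr{B}$, since the Theorem~\ref{thm:DifferentActive} options for $\mathscr{B}$ only involve placements on empty piles or $r$-piles. Thus $\mathscr{B}$'s action on $\beta$ must be a Theorem~\ref{thm:SameActive} move, and it is one of two types: Case~A, where $\mathscr{B}$ places an $r$ on $\beta$ (turning it into the long $r$-pile $\langle\beta,(r)\rangle$), or Case~B, where $\mathscr{B}$ places a $b$ on $\beta$ and self-captures. In addition, no Theorem~\ref{thm:SameActive} move by $\mathscr{B}$ can create a long $b$-pile ($r$-placements either give an $r$-singleton or turn a $b$-pile into an $r$-pile, while $b$-placements on $b$-piles trigger a capture). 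Therefore, immediately after the action on $\beta$, the board contains no long $b$-pile and is of generalized type~I, while $\mathscr{B}$ is still the active player.

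Next I estimate $\mathscr{B}$'s $b$-chip count $\widehat m_b$ at that instant. Every Theorem~\ref{thm:SameActive} move of $\mathscr{B}$ preceding the action on $\beta$ either does not involve $b$ chips or is a self-capture of a $b$-singleton; in the latter case $\mathscr{B}$ plays one $b$ and must discard one $b$ (the singleton contains no $r$), leaving the $b$-count unchanged. Consequently, in Case~A one has $\widehat m_b = m_b$, and in Case~B the self-capture of $\beta$ increases $\mathscr{B}$'s $b$-count by at most $|\beta|_b$ (maximized when $\mathscr{B}$ discards an $r$ from $\beta$), so $\widehat m_b \leq m_b + |\beta|_b$. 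The hypothesis $m_b + |\beta|_b \leq n_r$ yields $\widehat m_b \leq n_r$ in both cases.

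The remainder of $\mathscr{B}$'s round is simply an initial segment of moves available to $\mathscr{B}$ as the active player in this generalized type~I position, so the game tree from this point coincides with that of a fresh $\mathscr{B}$-round starting from the same state. Since $\widehat m_b \leq n_r$ and $\sum_i |\rho_i|_r - \max_i |\rho_i|_r \geq 0$, the winning condition of Theorem~\ref{thm-gentype1} for $\mathscr{B}$ must fail: if $n_r = 0$ then $\widehat m_b = 0$, and otherwise $\widehat m_b \leq n_r \leq n_r + \sum_i |\rho_i|_r - \max_i |\rho_i|_r$. Theorem~\ref{thm-gentype1} then delivers the conclusion that $\mathscr{R}$ wins by applying $\strat$. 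The most delicate point is the Case~B bookkeeping around which chip $\mathscr{B}$ discards during the self-capture; once handled correctly, invoking Theorem~\ref{thm-gentype1} mid-round is legitimate, since only $\mathscr{B}$'s status as active player, the generalized type~I structure of the board, and the bound on $\widehat m_b$ determine the remaining game.
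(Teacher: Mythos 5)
Your proposal is correct and follows essentially the same route as the paper: split on whether the chip placed on $\beta$ is blue (self-capture, gaining at most $|\beta|_b$ blue chips) or red (no gain), observe that the resulting board is of generalized type I with $\mathscr{B}$ still active, verify $\widehat m_b \leq m_b + |\beta|_b \leq n_r \leq n_r + \sum_i|\rho_i|_r - \max_i|\rho_i|_r$, and conclude via Theorem~\ref{thm-gentype1}. The "delicate point" you flag — invoking Theorem~\ref{thm-gentype1} mid-round — is handled with the same implicit justification in the paper's own proof.
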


\begin{proof}
Suppose $\mathbb{B}=(k_e,k_r,k_b,1,1)$ is of type II. Assume $\mathscr{B}$ places a chip on $\beta$ before the end of their current round. As such the case where $m_b=m_r=0$ is not possible.

Therefore, $\mathscr{B}$ starts their round by making a sequence of $t+t'+s+1\geq 0$ moves, that consist of placing (refer to Theorem~\ref{thm:SameActive}):
$$
\left\{
    \begin{array}{l}
        \textrm{$t$ $r$ chips on $t$ empty piles,}\\
        \textrm{$t'$ $r$ chips on $t'$ $b$-singletons,}\\
        \textrm{$s$ $b$ chips on $s$ $b$-singletons,}\\
        \textrm{and $1$ chip on $\beta$.}
    \end{array}
\right.
$$
The chip placed on $\beta$ is either (1) a $b$ chip or (2) an $r$ chip.

\begin{enumerate}
\item Suppose $\mathscr{B}$ places a $b$ chip on $\beta$. Then, by the Capture Rule, one chip must be discarded from $\beta$. This chip can either be a guard or a prisoner.
Therefore, we have $\mathscr{B}=(m_b', m_r')$ where $m_b' \leq m_b+|\beta|_b$, $m_r'\leq m_r+|\beta|_r-t-t'$ and $m_b'+m_r'\leq m_b+m_r-t-t'+|\beta| - 1$, and $\mathscr{R}=(n_b',n_r')$ where $n_b' = n_b$ and $n_r'\geq n_r$.
The state of the board $\mathbb{B}$ is now the following:
$$\mathbb{B}=(k_e-t+s+1,k_r+t,k_b-t'-s,t'+1,0). $$
The $t'$ long $r$-piles that were created are all $(b,r)$-piles. Let us denote the $t'+1$ long $r$-piles by $\rho_1, \rho_2, \ldots, \rho_{t'+1}$, respectively. 

Note that $\mathbb{B}$ is now of generalized type I and it is still $\mathscr{B}$'s turn to play.
If $m_b'=m_r'=0$, then $\mathscr{B}$ is eliminated and $\mathscr{R}$ wins.
Otherwise, since
$$m_b' \leq m_b+|\beta|_b \leq n_r \leq n_r' \leq n_r' + \sum_{i=1}^{t'+1}|\rho_i|_r - \max_{1 \leq i \leq t'+1}\big\{\left|\rho_{i}\right|_r\big\} ,$$
then, by Theorem~\ref{thm-gentype1}, $\mathscr{R}$ wins by applying $\strat$ at every round.

\item Suppose $\mathscr{B}$ places an $r$ chip on $\beta$. Then, we have $\mathscr{B}=(m_b', m_r')$ where $m_b'=m_b$ and $m_r'\leq m_r-t-t'-1$, and $\mathscr{R}=(n_b',n_r')$ where $n_b' = n_b$ and $n_r' \geq n_r$. 
The state of the board $\mathbb{B}$ is now the following:
$$\mathbb{B}=(k_e-t+s,k_r+t,k_b-t'-s,t'+2,0). $$
The $t'+1$ long $r$-piles that were created consist of $t'$ $(b,r)$-piles, and $\langle\beta, (r)\rangle$. Let us denote the $t'+2$ long $r$-piles by $\rho_1,\rho_2,\ldots,\rho_{t'+2}$, respectively.

Note that $\mathbb{B}$ is now of generalized type I and it still $\mathscr{B}$'s turn to play. If $m_b'=m_r'=0$, then $\mathscr{B}$ is eliminated and $\mathscr{R}$ wins.
Otherwise, since
$$m_b'= m_b< m_b+|\beta|_b \leq n_r \leq n_r' \leq n_r'+ \sum_{i=1}^{t'+2}|\rho_i|_r- \max_{1 \leq i \leq t'+2} \big\{\left|\rho_{i}\right|_r\big\},$$
then, by Theorem~\ref{thm-gentype1}, $\mathscr{R}$ wins by applying $\strat$ at every round.
\end{enumerate}
\end{proof}

\begin{prop}
\label{prop-typeII-R3}
Let $\mathbb{B}=(k_e,k_r,k_b,1,1)$ be of type II, $\mathscr{B}=\left(m_b, m_r\right)$, $\mathscr{R}=\left(n_b, n_r\right)$, and assume that
$\mathscr{B}$ is the active player. If
\begin{align}
\label{prop-typeII-R3.eq}
m_b\geq 0 \qquad \text{and} \qquad m_b+|\beta|_b \leq n_r
\end{align}
when $\mathscr{B}$'s turn starts, and $\mathscr{B}$ does not place any chip on $\beta$ during their current round, then $\mathscr{R}$ wins by applying $\strat$ at every round.
\end{prop}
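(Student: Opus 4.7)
I will prove this by induction on $m_b + m_r$. Intuitively, each round in which $\mathscr{B}$ does not touch $\beta$ strictly decreases $\mathscr{B}$'s total chip count, and after $\mathscr{R}$ applies $\strat$ the game either ends immediately in $\mathscr{R}$'s favor or reduces to a state already handled by an earlier proposition or a strictly smaller instance of Proposition~\ref{prop-typeII-R3}.

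The base case $m_b + m_r = 0$ is immediate: $\mathscr{B}$ opens the round without any chips and is eliminated, so $\mathscr{R}$ wins. For the inductive step, I first enumerate $\mathscr{B}$'s possible moves. By Theorem~\ref{thm:SameActive}, the same-active-player moves available to $\mathscr{B}$ that do not touch $\beta$ are placing (i) $t$ $r$-chips on empty piles, (ii) $t'$ $r$-chips on $b$-singletons (creating $(b,r)$-piles), and (iii) $s$ $b$-chips on $b$-singletons (each such move captures and leaves $m_b$ unchanged). If $\mathscr{B}$ runs out of chips before completing a Theorem~\ref{thm:DifferentActive} move, they are eliminated. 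Otherwise, $\mathscr{B}$ closes the round (again without touching $\beta$) by one of the four moves: (a) a $b$-chip on an empty pile, (b) a $b$-chip on an $r$-singleton, (c) a $b$-chip on $\rho$, or (d) an $r$-chip on some $r$-pile. Player $\mathscr{R}$ then applies $\strat$: capture every remaining $r$-pile, discard all $b$-prisoners, and place a single $r$-chip on a longest $b$-pile.

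In cases (a) and (d), $\beta$ is the unique long $b$-pile when $\mathscr{R}$ reaches step~3 of $\strat$, so $\mathscr{R}$ places the $r$-chip on $\beta$, producing a generalized type I board whose only long $r$-pile is $\rho'' = \langle \beta, (r) \rangle$. Bookkeeping yields $m_b'' \in \{m_b - 1, m_b\}$ and $n_r'' \geq n_r + |\rho|_r + t' - 1$, augmented in case (d) by what is gained from the forced pre-capture of $\mathscr{B}$'s played pile. Combining these with $|\rho|_r \geq 1$, $|\beta|_b \geq 1$, and the hypothesis $m_b + |\beta|_b \leq n_r$ gives $m_b'' \leq n_r''$. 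Since there is exactly one long $r$-pile, $\sum |\rho_i''|_r - \max |\rho_i''|_r = 0$, so either $m_b'' = 0$ or $n_r'' > 0$ with $m_b'' \leq n_r''$, and Theorem~\ref{thm-gentype1} finishes those cases.

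In cases (b) and (c), $\mathscr{B}$'s final move creates a second long $b$-pile, namely $(r,b)$ in case (b) and $\langle \rho, (b) \rangle$ in case (c), so Strategy $\strat$ converts exactly one of the two long $b$-piles into a new long $r$-pile and leaves the other as the new $\beta''$. The resulting board is again of type II. The delicate point is verifying $m_b'' + |\beta''|_b \leq n_r''$ irrespective of which of the two long $b$-piles $\mathscr{R}$'s tie-breaking chooses; this is where Proposition~\ref{prop:AlternatingPile} enters, giving $|\rho|_b + 1 \leq |\beta|_b$ whenever $|\beta| \geq |\rho| + 1$ and thus making the bound robust to ties. Since $m_b'' + m_r'' < m_b + m_r$ in every sub-case, whichever scenario arises in the next round, namely $m_b'' = 0$, $\mathscr{B}$ touches $\beta''$, or $\mathscr{B}$ does not touch $\beta''$, Proposition~\ref{prop-typeII-R1}, Proposition~\ref{prop-typeII-R2}, or the inductive hypothesis respectively guarantees that $\mathscr{R}$ wins. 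The main obstacle will be the chip-and-pile bookkeeping in cases (b) and (c), especially the alternating-parity argument $|\rho|_b + 1 \leq |\beta|_b$ that forces the crucial inequality to go through for every tie-breaking choice in step~3 of $\strat$.
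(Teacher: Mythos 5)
Your proposal is correct and follows essentially the same route as the paper's proof: induction on $m_b+m_r$, the same enumeration of $\mathscr{B}$'s round-opening and round-closing moves, reduction of the ``$r$ on $\beta$'' outcomes to Theorem~\ref{thm-type1} (your invocation of Theorem~\ref{thm-gentype1} with a single long $r$-pile is equivalent), and the three-way split into Proposition~\ref{prop-typeII-R1}, Proposition~\ref{prop-typeII-R2}, or the inductive hypothesis when the board stays of type II. Your explicit alternating-parity bound $|\pi|_b\leq|\beta|_b$ for the shorter of the two long $b$-piles is exactly the point the paper's proof uses (somewhat tersely, via ``by the definition of $\pi_{\min}$'') in its Cases 2 and 3, so you have identified and justified the one genuinely delicate step.
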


We prove Proposition~\ref{prop-typeII-R3} using the following inductive argument on $m_b+m_r$.
\begin{description}
\item[Base case:] We prove that for all game states where $\mathbb{B}=(k_e,k_r,k_b,1,1)$, $\mathscr{B}=\left(0, 0\right)$, $\mathscr{R}=\left(n_b, n_r\right)$, such that $0+|\beta|_b \leq n_r$, if $\mathscr{B}$ is the active player, then $\mathscr{R}$ wins by applying $\strat$.
\item[Inductive step:] Consider a game state where $\mathbb{B}=(k_e,k_r,k_b,1,1)$, $\mathscr{B}=\left(m_b, m_r\right)$, $\mathscr{R}=\left(n_b, n_r\right)$ such that~\eqref{prop-typeII-R3.eq} is satisfied and $m_b+m_r > 0$.
Assume $\mathscr{B}$ is the active player. Moreover, suppose that, no matter what $\mathscr{B}$ plays, as long as $\mathscr{B}$ does not place a chip on $\beta$, $\mathscr{R}$ applies $\strat$ during their round. Then, when $\mathscr{R}$'s round is over, we get $\mathbb{B}=(k_e'',k_r'',k_b'',1,1)$, $\mathscr{B}=\left(m_b'', m_r''\right)$ and  $\mathscr{R}=\left(n_b'', n_r''\right)$, where $\mathscr{B}$ is the active player, \eqref{prop-typeII-R3.eq} is satisfied by $\mathbb{B}$, $\mathscr{B}$ and $\mathscr{R}$, and $m_b''+m_r'' < m_b+m_r$. 

Note that in some cases, instead of using the inductive hypothesis, we will invoke Theorem~\ref{thm-type1}, Proposition~\ref{prop-typeII-R1} or Proposition~\ref{prop-typeII-R2}.
\end{description}

\begin{proof}
Suppose $\mathbb{B}=(k_e,k_r,k_b,1,1)$ is of type II.

Consider a game state where $m_b = m_r = 0$ (base case). Since $\mathscr{B}$ is out of chips, they are eliminated and $\mathscr{R}$ wins.

Consider now a game state where $\mathscr{B}=\left(m_b, m_r\right)$ with $m_b+m_r> 0$, and $\mathscr{R}=\left(n_b, n_r\right)$ such that~\eqref{prop-typeII-R3.eq} is satisfied.
Suppose $\mathscr{B}$ starts their round by making a sequence of $t+t'+s\geq 0$ moves, that consist of placing (refer to Theorem~\ref{thm:SameActive}):
$$
\left\{
	\begin{array}{l}
		\text{$t$ $r$ chips on $t$ empty piles,}\\
		\text{$t'$ $r$ chips on $t'$ $b$-singletons,}\\
		\text{and $s$ $b$ chips on $s$ $b$-singletons.}
	\end{array}
\right.
$$
The state of the board $\mathbb{B}$ is now the following:
$$\mathbb{B}=(k_e-t+s,k_r+t,k_b-t'-s,t'+1,1). $$
Let us denote the $t'+1$ long $r$-piles by $\rho_1,\rho_2,\ldots,\rho_{t'+1}$, respectively.

We have $\mathscr{B}=(m_b',m_r')$, where $m_b'=m_b$ and $m_r'\leq m_r-t-t'$. If $m_b=m_r'=0$, then $\mathscr{B}$ is out of chips. Therefore, $\mathscr{B}$ is eliminated and $\mathscr{R}$ wins. Otherwise, $\mathscr{B}$ can end their round in four different ways (refer to Theorem \ref{thm:DifferentActive}), by: (1) placing a $b$ chip on an empty pile, (2) placing a $b$ chip on an $r$-singleton, (3) placing a $b$ chip on a long $r$-pile, or (4) placing an $r$ chip on an $r$-pile.
Observe that in all cases, $\mathscr{R}$ has at least one $r$ chip since $n_r\geq m_b+|\beta|_b \geq |\beta|_b \geq 1$.
\begin{enumerate}
\item Suppose $\mathscr{B}$ places a $b$ chip on an empty pile. Thus, a $b$-singleton is created and no long pile is created. The move goes to $\mathscr{R}$, who captures all $r$-piles and discards all their prisoners, as per steps 1 and 2 of $\strat$. Then, as per Step 3 of $\strat$, $\mathscr{R}$ places an $r$ chip on $\beta$. Therefore, the board contains only one long pile $\langle \beta,(r) \rangle$. As such, $\mathbb{B}$ is of type I and it is $\mathscr{B}$'s turn to play. Moreover, we have $\mathscr{B}=(m_b'', m_r'')$ and $\mathscr{R}=(0, n_r'')$ where $m_b''=m_b-1$,
$$m_r'' \geq m_r-t-t' \qquad \textrm{and} \qquad n_r'' \geq n_r+\sum_{i=1}^{t'+1}|\rho_i|_r-1.$$
Since $m_b \leq m_b+|\beta|_b \leq n_r \leq n_r+\sum_{i=1}^{t'+1}|\rho_i|_r$ by~\eqref{prop-typeII-R3.eq}, then $m_b-1 \leq n_r+\sum_{i=1}^{t'+1}|\rho_i|_r-1\leq n_r''$. Therefore, by Theorem~\ref{thm-type1}, $\mathscr{R}$ wins by applying $\strat$ at every round.

\item Suppose $\mathscr{B}$ places a $b$ chip on an $r$-singleton. Thus, the long $b$-pile $(r,b)$ is created. The move goes to $\mathscr{R}$, who captures all $r$-piles and discards all their prisoners, as per steps 1 and 2 of $\strat$.
Observe that, since $\beta$ is a long $b$-pile, then $|\beta| = \max\{|\beta|, |(r,b)|\}$.
Then, as per Step 3 of $\strat$, $\mathscr{R}$ places an $r$ chip on $\beta$. Therefore, the board contains exactly two long piles: one long $r$-pile $\langle \beta,(r)\rangle$ and one long $b$-pile $(r,b)$. As such, $\mathbb{B}$ is of type II and it is $\mathscr{B}$'s turn to play. Moreover, we have $\mathscr{B}=(m_b'', m_r'')$ and $\mathscr{R}=(0, n_r'')$ where $m_b'' = m_b-1$,
$$m_r'' \geq m_r-t-t' \qquad \textrm{and} \qquad n_r'' \geq n_r+\sum_{i=1}^{t'+1}|\rho_i|_r-1.$$
We also have
$$m_b'' + |(r,b)|_b = (m_b-1) + |(r,b)|_b \leq (m_b-1)+ |\beta|_b \leq n_r-1 \leq n_r+\sum_{i=1}^{t'+1}|\rho_i|_r - 1 \leq n_r'' $$
by~\eqref{prop-typeII-R3.eq}.
Additionally, $m_b''+m_r''\leq (m_b-1)+(m_r-t-t')<m_b+m_r$. 
\begin{enumerate}
\item If $m_b''= 0$, then by Proposition~\ref{prop-typeII-R1}, $\mathscr{R}$ wins by applying $\strat$ at every round.
\item If $m_b''>0$ and $\mathscr{B}$ places a chip on $\beta$, then by Proposition~\ref{prop-typeII-R2}, $\mathscr{R}$ wins by applying $\strat$ at every round.
\item If $m_b''>0$ and $\mathscr{B}$ does not place a chip on $\beta$, then, by induction, $\mathscr{R}$ wins by applying $\strat$ at every round.
\end{enumerate}

\item Suppose $\mathscr{B}$ places a $b$ chip on a long $r$-pile, say $\rho_1$ without loss of generality. Thus, the long $b$-pile $\langle \rho_1,(b) \rangle$ is created. The move goes to $\mathscr{R}$, who captures all $r$-piles and discards all their prisoners, as per steps 1 and 2 of $\strat$. Let $\pi_{\min},\pi_{\max}\in\{\beta,\langle \rho_1,(b) \rangle\}$ such that $\pi_{\min}\neq\pi_{\max}$, $|\pi_{\min}| =\min\{|\beta|, |\langle \rho_1,(b)\rangle|\}$ and $|\pi_{\max}|=\max\{|\beta|, |\langle \rho_1,(b)\rangle|\}$. Then, as per Step 3 of $\strat$, $\mathscr{R}$ places an $r$ chip on $\pi_{\max}$. Therefore, the board contains exactly two long piles: one long $r$-pile $\langle \pi_{\max},(r) \rangle$ and one long $b$-pile $\pi_{\min}$. As such, $\mathbb{B}$ is of type II and it is $\mathscr{B}$'s turn to play. Moreover, we have $\mathscr{B}=(m_b'', m_r'')$ and $\mathscr{R}=(0, n_r'')$ where $m_b'' = m_b-1$,
$$m_r'' \geq m_r-t-t' \qquad \textrm{and} \qquad n_r'' \geq n_r+\sum_{i=2}^{t'}|\rho_i|_r-1.$$
We also have
$$m_b''+|\pi_{min}|_b = (m_b-1)+|\pi_{min}|_b \leq (m_b-1)+ |\beta|_b \leq n_r-1 \leq n_r+\sum_{i=2}^{t'+1}|\rho_i|_r - 1 \leq n_r'' $$
by the definition of $\pi_{\min}$ and~\eqref{prop-typeII-R3.eq}.
Additionally, $m_b''+m_r''\leq (m_b-1)+(m_r-t-t')<m_b+m_r$. 
The rest of the argument for this case is identical to the one for Case 2.

\item Suppose $\mathscr{B}$ places an $r$ chip on an $r$-pile. This results in a capture for $\mathscr{R}$, from which $\mathscr{R}$ gains at least one $r$ chip. The move goes to $\mathscr{R}$, who captures all $r$-piles and discards all their prisoners, as per steps 1 and 2 of $\strat$. Then, as per Step 3 of $\strat$, $\mathscr{R}$ places an $r$ chip on $\beta$. Therefore, the board contains only one long pile $\langle \beta,(r) \rangle$. As such, $\mathbb{B}$ is of type I and it is $\mathscr{B}$'s turn to play. Moreover, we have $\mathscr{B}=(m_b'', m_r'')$ and $\mathscr{R}=(0, n_r'')$ where $m_b''=m_b$,
$$m_r'' \geq m_r-t-t'-1 \qquad \textrm{and} \qquad n_r'' \geq n_r+\sum_{i=1}^{t'+1}|\rho_i|_r.$$
Since
$$m_b''=m_b \leq m_b+|\beta|_b \leq n_r \leq n_r+\sum_{i=1}^{t'+1}|\rho_i|_r\leq n_r'' $$
by~\eqref{prop-typeII-R3.eq}, then by Theorem~\ref{thm-type1}, $\mathscr{R}$ wins by applying $\strat$ at every round.
\end{enumerate}
\end{proof}

The following corollary is a direct consequence of Propositions~\ref{prop-typeII-R1}, \ref{prop-typeII-R2} and~\ref{prop-typeII-R3}.
\begin{cor}
\label{cor-typeII-R}
Let $\mathbb{B}=(k_e,k_r,k_b,1,1)$ be of type II, $\mathscr{B}=\left(m_b, m_r\right)$, $\mathscr{R}=\left(n_b, n_r\right)$, and assume that $\mathscr{B}$ is the active player. If $m_b= 0$ or $m_b+|\beta|_b \leq n_r$ when $\mathscr{B}$'s turn starts, then $\mathscr{R}$ wins by applying $\strat$ at every round.
\end{cor}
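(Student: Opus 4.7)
The plan is to derive the corollary by simple case analysis on the hypothesis and on $\mathscr{B}$'s behavior during their first round, invoking Propositions~\ref{prop-typeII-R1}, \ref{prop-typeII-R2}, and~\ref{prop-typeII-R3} to dispatch each case. First, I would note that the hypothesis ``$m_b = 0$ or $m_b + |\beta|_b \leq n_r$'' naturally splits into two (non-exclusive) situations. If $m_b = 0$, then Proposition~\ref{prop-typeII-R1} immediately yields that $\mathscr{R}$ wins by applying $\strat$ at every round, and we are done.

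Otherwise, we may assume $m_b \geq 0$ and $m_b + |\beta|_b \leq n_r$, which is precisely the common hypothesis of Propositions~\ref{prop-typeII-R2} and~\ref{prop-typeII-R3}. The difference between these two propositions is only in whether $\mathscr{B}$ chooses to place a chip on the long $b$-pile $\beta$ during their current round. Since these two options are exhaustive (either $\mathscr{B}$ places a chip on $\beta$ at some point during their round, or they do not), one of the two propositions applies, and in both cases the conclusion is that $\mathscr{R}$ wins by applying $\strat$ at every round.

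The only delicate point is to verify that this case split is legitimate: each of Propositions~\ref{prop-typeII-R2} and~\ref{prop-typeII-R3} asserts that $\mathscr{R}$'s strategy $\strat$ wins regardless of the precise sequence of moves $\mathscr{B}$ makes, as long as their first-round behavior matches the hypothesis of the corresponding proposition. Thus, whatever $\mathscr{B}$ actually decides to do, the correct proposition will fire for that round. After $\mathscr{R}$'s response, the new game state again satisfies one of the branches (this is essentially what the proofs of Propositions~\ref{prop-typeII-R1}--\ref{prop-typeII-R3} already establish inductively), so $\strat$ remains a winning strategy throughout the play. No obstacle of substance remains, since all the technical work has already been carried out in the three preceding propositions.
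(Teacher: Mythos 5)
Your proposal is correct and matches the paper's argument exactly: the paper itself announces this three-way case split ($m_b=0$, or $m_b+|\beta|_b\leq n_r$ with $\mathscr{B}$ either placing or not placing a chip on $\beta$ during their round) just before Propositions~\ref{prop-typeII-R1}--\ref{prop-typeII-R3}, and then states the corollary as a direct consequence. Your remark that the two behavioral sub-cases are exhaustive over $\mathscr{B}$'s possible play is precisely the point that makes the combination legitimate.
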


The following theorem is a direct consequence of Proposition~\ref{prop-typeII-B} and Corollary~\ref{cor-typeII-R}.
\begin{theorem}\label{thm-type2}
Let $\mathbb{B}=(k_e,k_r,k_b,1,1)$ be of type II, $\mathscr{B}=\left(m_b, m_r\right)$, $\mathscr{R}=\left(n_b, n_r\right)$, and assume that $\mathscr{B}$ is the active player.
Then, $\mathscr{B}$ has a winning strategy if and only if $m_b> 0$ and $m_b+|\beta|_b>n_r$ when $\mathscr{B}$'s turn starts. 
Whenever there is a winning strategy for $\mathscr{B}$ (respectively for $\mathscr{R}$), then $\strat$ is such a strategy.
\end{theorem}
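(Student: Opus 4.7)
The plan is to derive Theorem~\ref{thm-type2} by simply stitching together the two results that immediately precede it, just as the excerpt announces. For the ``if'' direction, I would invoke Proposition~\ref{prop-typeII-B}: under the hypothesis $m_b>0$ and $m_b+|\beta|_b>n_r$, that proposition already exhibits $\strat$ as a winning strategy for $\mathscr{B}$. This direction is essentially one line.

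For the ``only if'' direction, I would argue by contrapositive. If the conjunction $m_b>0$ and $m_b+|\beta|_b>n_r$ fails, then either $m_b=0$ or $m_b+|\beta|_b\leq n_r$ holds, so Corollary~\ref{cor-typeII-R} supplies a winning strategy for $\mathscr{R}$, namely $\strat$. Since SLS restricted to two players and two colors is a finite, deterministic, perfect-information game whose termination rule leaves exactly one player in the game, $\mathscr{B}$ and $\mathscr{R}$ cannot simultaneously possess winning strategies; hence $\mathscr{B}$ has none. Combining the two directions yields the biconditional, and the closing clause identifying $\strat$ as a realization of the winning strategy is already recorded in both of the cited statements (for $\mathscr{B}$ in Proposition~\ref{prop-typeII-B}, and for $\mathscr{R}$ in Corollary~\ref{cor-typeII-R}).

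The only conceptual point worth articulating carefully is the mutual-exclusion step in the contrapositive. A winning strategy for a player $\mathscr{X}$ must be understood as a prescription for every one of $\mathscr{X}$'s choices (turn moves, donations, discards, responses to deals) that secures victory against every admissible sequence of opponent choices. If both $\mathscr{B}$ and $\mathscr{R}$ had such strategies, pitting them against one another would produce a single play declared a win for both players, contradicting the Winner Rule, which certifies exactly one survivor. I do not foresee any real obstacle: the substantive case analysis already lives in Proposition~\ref{prop-typeII-B} and in the chain of Propositions~\ref{prop-typeII-R1}--\ref{prop-typeII-R3} underlying Corollary~\ref{cor-typeII-R}, and the present theorem only assembles them.
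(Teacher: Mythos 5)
Your proposal is correct and matches the paper's approach exactly: the paper states Theorem~\ref{thm-type2} as a direct consequence of Proposition~\ref{prop-typeII-B} and Corollary~\ref{cor-typeII-R}, with the negation of the winning condition splitting precisely into the cases $m_b=0$ or $m_b+|\beta|_b\leq n_r$ covered by the corollary. Your explicit remark on why both players cannot simultaneously hold winning strategies is a sound elaboration of a step the paper leaves implicit.
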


\subsection{Generalized Type II Board}
\label{subsec:gentypeII}

We slightly extend the definition of type II boards in the following way.
\begin{definition}
The board $\mathbb{B}$ is said to be of \emph{generalized type II} if $\mathbb{B}=(k_e,k_r,k_b,1,h)$ with $h\geq 1$ We denote the long $r$-pile by $\rho$, and the $h$ long $b$-piles by $\beta_1, \ldots, \beta_h$.
\end{definition}

%\begin{figure}[H]
%\centering
%\includegraphics[scale=1]{}
%\caption{Generalized Type II Board with $k=8$}
%\end{figure}

The structure of this subsection is the same as the one for Subsection~\ref{subsec:typeII}. We first describe the states where there is a winning strategy for $\mathscr{B}$ on a generalized type II board (refer to Proposition~\ref{prop-gentype2-B}). Then we describe the states where there is a winning strategy for $\mathscr{R}$ on a generalized type II board(refer to Propositions~\ref{prop-gentype2-R1} and~\ref{prop-gentype2-R2}). We finally present the characterization of the winning states for the first player in Theorem~\ref{thm-gentype2}.

%\lou{I am here.}

\begin{prop}
\label{prop-gentype2-B}
Let $\mathbb{B}=(k_e,k_r,k_b,1,h)$ be of generalized type II, $\mathscr{B}=\left(m_b, m_r\right)$, $\mathscr{R}=\left(n_b, n_r\right)$, and assume that $\mathscr{B}$ is the active player. If
\begin{align}
\label{prop-gentype2-B.eq}
m_b>0 \qquad \text{and} \qquad m_b+\sum_{i=1}^{h}\left|\beta_{i}\right|_b>n_r
\end{align}
when $\mathscr{B}$'s turn starts, then $\mathscr{B}$ wins by applying $\strat$ at every round.
\end{prop}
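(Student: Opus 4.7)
The plan is to prove the proposition by induction on $n_b+n_r$, the total chip count held by $\mathscr{R}$, which strictly decreases during each round of $\mathscr{R}$: the ending move from Theorem~\ref{thm:DifferentActive} always consumes exactly one chip (whether $b$ or $r$), while none of their self-moves from Theorem~\ref{thm:SameActive} net a gain (captures of $r$-singletons break even at net zero, and $\mathscr{B}$ under $\strat$ never donates). The base case $n_b+n_r=0$ is immediate: after $\mathscr{B}$'s $\strat$ application, the move passes to a chipless $\mathscr{R}$, who is eliminated.

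For the inductive step, first trace $\mathscr{B}$'s application of $\strat$: the $k_b$ $b$-singleton captures leave $\mathscr{B}$'s $b$-count invariant per capture; each long $b$-pile $\beta_i$ captured nets $\mathscr{B}$ a gain of $|\beta_i|_b$ $b$-chips after discarding the forced $r$-chip; all prisoners are discarded in Step~2; and Step~3 places one $b$-chip on the unique long $r$-pile $\rho$. The resulting state has board $(k_e+k_b,k_r,0,0,1)$ with the single long $b$-pile $\beta^{\star}:=\langle\rho,(b)\rangle$, and $\mathscr{B}=(m_b',0)$ with $m_b'\geq m_b+\sum_{i=1}^h|\beta_i|_b-1 \geq m_b \geq 1$, since each long $b$-pile contributes at least one $b$-chip to the sum.

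Next, enumerate $\mathscr{R}$'s ending move (following $t+t'+s$ self-moves that create $t$ $b$-singletons and $t'$ new $(r,b)$ long $b$-piles from $r$-singletons, with $s$ $r$-singleton self-captures): (I) $r$ on an empty pile, (II) $r$ on a $b$-singleton, (III) $r$ on a long $b$-pile, or (IV) $b$ on a $b$-pile. Cases (II) and (III) with $t'\geq 1$ leave a generalized type II board with strictly smaller $n_b+n_r$, so the inductive hypothesis applies once we verify $m_b''+\sum|\beta_j''|_b>n_r''$ by chip accounting using the slack provided by $m_b+\sum|\beta_i|_b>n_r$. Case (III) with $t'=0$ produces a generalized type I board to which Theorem~\ref{thm-gentype1} applies directly.

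The principal obstacle lies in cases (I) and (IV), where the resulting board has no long $r$-pile but at least one long $b$-pile and so matches no previously defined type. In these cases the argument must continue past $\mathscr{B}$'s immediate next application of $\strat$, which captures all remaining long $b$-piles and places a $b$-chip on an $r$-singleton (present in case (I) from $\mathscr{R}$'s ending move, and in case (IV) from the original $r$-singletons or new ones), thus producing either a type II or a generalized type II board where Theorem~\ref{thm-type2} or the inductive hypothesis applies with strictly smaller $n_b+n_r$. The delicate bookkeeping required to confirm that the slack $m_b+\sum|\beta_i|_b-n_r>0$ suffices to cover the additional $b$-chips $\mathscr{B}$ spends during this chained two-round analysis is the technical heart of the proof.
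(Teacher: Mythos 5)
There is a genuine gap, and also a missed simplification that would have avoided the gap entirely. The paper's proof of this proposition is a direct reduction with no induction: when $\mathscr{B}$ applies steps 1 and 2 of $\strat$, they capture all $h$ long $b$-piles (discarding an $r$ chip from each, which exists by Proposition~\ref{prop:AlternatingPile}), so \emph{before} Step 3 the board is already $(k_e+k_b+h,k_r,0,1,0)$, i.e.\ of type I, with $\mathscr{B}=(m_b',0)$ where $m_b'\geq m_b+\sum_{i=1}^{h}|\beta_i|_b>n_r=n_r'$. Theorem~\ref{thm-type1} then finishes the argument in one stroke, since it already covers Step 3 and every subsequent round. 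Your proposal instead pushes through the full round including Step 3, hands the turn to $\mathscr{R}$, and launches a case analysis on $\mathscr{R}$'s replies --- precisely the work that Proposition~\ref{prop-typeI-B} has already done.

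The gap itself: you explicitly defer the hardest part. In your cases (I) and (IV) you acknowledge that the resulting board fits no established type and that a ``chained two-round analysis'' with ``delicate bookkeeping'' is needed, but you never carry it out; a proof that names its technical heart without executing it is incomplete. Moreover, the sketch of that continuation is wrong in detail: after $\mathscr{R}$ ends with move (I) or (IV), $\mathscr{B}$'s next application of steps 1 and 2 of $\strat$ captures \emph{all} long $b$-piles and leaves a board with no long piles at all --- a type I board --- not ``a type II or generalized type II board'' as you claim (placing a $b$ chip on an $r$-singleton in Step 3 creates a long $b$-pile and no long $r$-pile, which is neither). So even the intended structure of the chained argument does not land in a case you can cite. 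The fix is to notice, as the paper does, that you should stop after steps 1 and 2 of the very first round and invoke Theorem~\ref{thm-type1} there.
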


\begin{proof}
Suppose $\mathbb{B}=(k_e,k_r,k_b,1,h)$ is of generalized type II. Consider a game state where $\mathscr{B}=(m_b,m_r)$ and $\mathscr{R}=(n_b,n_r)$, with $m_b>0$ and $m_b+\sum_{i=1}^{h}|\beta_i|_b>n_r$. Since $m_b>0$, then $\mathscr{B}$ has at least one $b$ chip. Player $\mathscr{B}$ applies Strategy $\strat$: they capture all $b$-piles and discard all their prisoners, as per steps 1 and 2 of $\strat$. The state of the board $\mathbb{B}$ is now the following:
$$\mathbb{B}=(k_e+k_b+h,k_r,0,1,0).$$

We now have a board of type I, with active player $\mathscr{B}=(m_b', 0)$ where $m_b'\geq m_b+\sum_{i=1}^{h}|\beta_i|_b$. Moreover, $\mathscr{R}=(n_b', n_r')$ where $n_b'\leq n_b$ and $n_r' = n_r$. 

By~\eqref{prop-gentype2-B.eq}, we have $m_b'\geq m_b+\sum_{i=1}^{h}|\beta_i|_b > n_r = n_r'$. Therefore, by Theorem~\ref{thm-type1}, $\mathscr{B}$ wins by applying $\strat$ at every round.
\end{proof}

\begin{prop}
\label{prop-gentype2-R1}
Let $\mathbb{B}=(k_e,k_r,k_b,1,h)$ be of generalized type II, $\mathscr{B}=\left(m_b, m_r\right)$, $\mathscr{R}=\left(n_b, n_r\right)$, and assume that $\mathscr{B}$ is the active player. If $m_b=0$ when $\mathscr{B}$'s turn starts, then $\mathscr{R}$ wins by applying $\strat$ at every round.
\end{prop}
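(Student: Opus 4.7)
The plan is to proceed by induction on $h$, the number of long $b$-piles. The base case $h=1$ reduces the board to type II with $m_b=0$, which is handled directly by Proposition~\ref{prop-typeII-R1}. For the inductive step we fix $h\geq 2$ and assume the statement for every generalized type II board with strictly fewer long $b$-piles.

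Suppose $m_b=m_r=0$; then $\mathscr{B}$ is eliminated and $\mathscr{R}$ wins. Otherwise, since $m_b=0$ and $\mathscr{R}$'s strategy $\strat$ never donates, $\mathscr{B}$'s $b$-count stays at $0$ throughout and $\mathscr{B}$ can only play $r$ chips during the round. By Theorem~\ref{thm:SameActive}, the ``stay active'' moves of $\mathscr{B}$ consist of placing $r$ on $t$ empty piles, $t'$ $b$-singletons, and $t''$ long $b$-piles, with $0\leq t''\leq h$. By Theorem~\ref{thm:DifferentActive}, the only way $\mathscr{B}$ can finish the round without being eliminated is to place an $r$ chip on some $r$-pile, triggering a capture by $\mathscr{R}$.

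Then $\mathscr{R}$ executes $\strat$: capture every remaining $r$-pile on the board, discard all prisoners, and place a single $r$ chip on the longest remaining $b$-pile (or on an empty pile if no $b$-pile remains). Writing $h''$ for the resulting number of long $b$-piles, a direct count gives $h''=h-t''-1$ when $t''<h$ and $h''=0$ when $t''=h$; in either case $h''\leq h-1<h$. Meanwhile $\mathscr{B}$ is now in state $(0,m_r'')$, i.e., $m_b''=0$. If $h''=0$, the resulting board is of generalized type I and Theorem~\ref{thm-gentype1} concludes since $m_b''=0$. If $h''\geq 1$, the board is of generalized type II with strictly fewer long $b$-piles and with $m_b''=0$, so the inductive hypothesis applies and $\mathscr{R}$ continues to win by $\strat$.

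The main obstacle is the careful bookkeeping of the board state after $\mathscr{B}$'s ending move and $\mathscr{R}$'s subsequent round of captures: one has to track separately the cases where $\mathscr{B}$'s ending $r$ chip lands on the original long $r$-pile $\rho$, on an $r$-singleton, or on one of the long $r$-piles that $\mathscr{B}$ manufactured during the stay-active phase (from $b$-singletons or long $b$-piles), and verify in each subcase that $m_b''=0$ is preserved and that the long $b$-pile count strictly decreases. Because we are in the $m_b=0$ branch, no inequality on $n_r$ must be tracked, which makes the argument considerably lighter than the analogous propositions in Subsection~\ref{subsec:gentypeI}.
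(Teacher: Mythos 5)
Your proposal is correct and follows essentially the same route as the paper: induction on $h$, base case handled by the type II results, $\mathscr{B}$ forced to end their round by playing an $r$ chip on an $r$-pile, and $\mathscr{R}$'s application of $\strat$ strictly decreasing the number of long $b$-piles while keeping $m_b=0$. The only cosmetic differences are that you cite Proposition~\ref{prop-typeII-R1} for the base case where the paper cites Theorem~\ref{thm-type2}, and Theorem~\ref{thm-gentype1} for the $h''=0$ case where the paper uses Theorem~\ref{thm-type1}; both substitutions are valid.
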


We prove Proposition~\ref{prop-gentype2-R1} using the following inductive argument on $h$.
\begin{description}
\item[Base case:] We prove that for all game states where $\mathbb{B}=(k_e,k_r,k_b,1,1)$, $\mathscr{B}=\left(0, m_r\right)$, $\mathscr{R}=\left(n_b, n_r\right)$, if $\mathscr{B}$ is the active player, then $\mathscr{R}$ wins by applying $\strat$.
\item[Inductive step:] Consider a game state where $\mathbb{B}=(k_e,k_r,k_b,1,h)$, $\mathscr{B}=\left(0, m_r\right)$, and $\mathscr{R}=\left(n_b, n_r\right)$. Assume $\mathscr{B}$ is the active player. Moreover, suppose that no matter what $\mathscr{B}$ plays, $\mathscr{R}$ applies $\strat$ during their round. Then, when $\mathscr{R}$'s round is over, we get $\mathbb{B}=(k_e'',k_r'',k_b'',1,h'')$, $\mathscr{B}=\left(0, m_r''\right)$ and  $\mathscr{R}=\left(n_b'', n_r''\right)$, where $\mathscr{B}$ is the active player, and $h'' < h$.

Note that in some cases, instead of using the inductive hypothesis, we will invoke Theorem~\ref{thm-type1}.
\end{description}

\begin{proof}
Suppose $\mathbb{B}=(k_e,k_r,k_b,1,h)$.

Consider a game state where $h=1$ (base case). Then $\mathbb{B}$ is of type II. By Theorem~\ref{thm-type2}, $\mathscr{R}$ wins by applying $\strat$.

Consider a game state where $\mathscr{B}=(0,m_r)$, $\mathscr{R}=(n_b,n_r)$, and $h> 1$. If $m_r=0$, then $\mathscr{B}$ is out of chips, they are eliminated and $\mathscr{R}$ wins. Otherwise, $\mathscr{B}$ starts their round by making a sequence of $t+t'+t''\geq 0$ moves, that consist of placing (refer to Theorem~\ref{thm:SameActive}):
$$
\left\{
	\begin{array}{l}
		\text{$t$ $r$ chips on $t$ empty piles,}\\
		\text{$t'$ $r$ chips on $t'$ $b$-singletons,}\\
		\text{and $t''$ $r$ chips on $t''$ long $b$-piles.}
\end{array}\right.
$$
The state of the board $\mathbb{B}$ is now the following:
$$\mathbb{B}=(k_e-t,k_r+t,k_b-t',t'+t''+1,h-t''). $$

We have $\mathscr{B}=(0,m_r')$ where $m_r'\leq m_r-t-t'-t''$. If $m_r'=0$, then $\mathscr{B}$ is out chips, they are eliminated and $\mathscr{R}$ wins. Otherwise, $\mathscr{B}$ must end their round by placing an $r$ chip on an $r$-pile (refer to Theorem~\ref{thm:DifferentActive}). This results in a capture for $\mathscr{R}$, from which $\mathscr{R}$ gains at least one $r$ chip. The move goes to $\mathscr{R}$, who captures all $r$-piles and discards all their prisoners, as per steps 1 and 2 of $\strat$. Now, the only long piles on the board are the $h-t''$ long $b$-piles. There are two cases to consider: (1) if $h-t'' = 0$, (2) or if $h-t''>0$.
\begin{enumerate}
\item If $h-t''=0$, then $\mathscr{R}$ places an $r$ chip on a $b$-singleton if there is one, or on an empty pile otherwise. In both cases, $\mathbb{B}$ is of type I and it is $\mathscr{B}$'s turn to play. Since $m_b=0$, then by Theorem \ref{thm-type1}, $\mathscr{B}$ wins by applying $\strat$ at every round.
\item If $h-t''>0$, then $\mathscr{R}$ places an $r$ chip on the $b$-pile of greatest length. As such, $\mathbb{B}$ is of generalized type II with $h-t''-1<h$ long $b$-piles and it is $\mathscr{B}$'s turn to play.
\end{enumerate}
\end{proof}

\begin{prop}
\label{prop-gentype2-R2}
Let $\mathbb{B}=(k_e,k_r,k_b,1,h)$ be of generalized type II board, $\mathscr{B}=\left(m_b, m_r\right)$, $\mathscr{R}=\left(n_b, n_r\right)$, and assume that $\mathscr{B}$ is the active player. If
\begin{align}
\label{prop-gentype2-R2.eq}
m_b>0 \qquad \textrm{and} \qquad m_b+\sum_{i=1}^{h}|\beta_i|_b \leq n_r
\end{align}
when $\mathscr{B}$'s turn starts, then $\mathscr{R}$ wins by applying $\strat$ at every round.
\end{prop}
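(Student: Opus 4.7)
The plan is to adapt the inductive structure of Proposition~\ref{prop-typeII-R3} to the setting of $h \geq 1$ long $b$-piles. I would use strong induction on the pair $(h, m_b + m_r)$ ordered lexicographically: the outer base case $h = 1$ is exactly Theorem~\ref{thm-type2}, and the inner base case $m_b + m_r = 0$ forces $\mathscr{B}$'s immediate elimination. For the inductive step, I parametrize $\mathscr{B}$'s round (via Theorem~\ref{thm:SameActive}) as $t$ placements of $r$ on empty piles, $t'$ placements of $r$ on $b$-singletons, $t''$ placements of $r$ on long $b$-piles, $s$ self-captures of $b$-singletons, and $s'$ self-captures of long $b$-piles, followed by one of the four ending moves from Theorem~\ref{thm:DifferentActive}. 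Then $\mathscr{R}$ applies Strategy~$\strat$: capturing every long $r$-pile, discarding all prisoners, and placing an $r$-chip on the largest remaining $b$-pile $\pi_{\max}$.

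In each of the four ending-move cases, the board after $\mathscr{R}$'s round is of type I (if no long $b$-piles remain), of type II (if exactly one remains), or of generalized type II (if two or more remain), and I verify the invariant $m_b'' + \sum_i|\beta_i''|_b \leq n_r''$ by a chip-counting argument. The key identities are that each long $r$-pile $\pi$ captured by $\mathscr{R}$ contributes $|\pi|_r$ $r$-chips to $n_r''$ (minus one per pile for the forced discard), and that $\mathscr{R}$'s Step~3 conversion of $\pi_{\max}$ to a long $r$-pile strips $|\pi_{\max}|_b$ from $\sum_i|\beta_i|_b$. I then conclude by invoking Theorem~\ref{thm-type1}, Theorem~\ref{thm-type2}, Proposition~\ref{prop-gentype2-R1} (if $m_b'' = 0$), or the inductive hypothesis; for the hypothesis, the parameter $(h, m_b+m_r)$ strictly decreases because either $\mathscr{B}$ touched a long $b$-pile during their round (so $h$ drops, since any $\beta_i$ that was either converted by $\mathscr{B}$'s free $r$-placement or self-captured is gone) or $\mathscr{B}$ did not, in which case they still spent at least one chip on the ending move so $m_b+m_r$ drops.

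The main obstacle is the ending-move case where $\mathscr{B}$ places a $b$-chip on $\rho$, transforming it into the long $b$-pile $\langle\rho,(b)\rangle$ and inflating $\sum_i|\beta_i|_b$ by $|\rho|_b + 1$. In isolation this breaks the potential inequality, so the resolution must exploit $\mathscr{R}$'s ability to recover the spike on the very next step. The key fact is that $|\langle\rho,(b)\rangle| = |\rho| + 1 \geq 3$, so $\mathscr{R}$'s Step~3 will pick either $\langle\rho,(b)\rangle$ itself or an even larger $\beta_i$ as $\pi_{\max}$; converting $\pi_{\max}$ back into a long $r$-pile removes at least $|\rho|_b + 1$ from $\sum_i|\beta_i|_b$ while yielding $\mathscr{R}$ at least $|\rho|_r$ new $r$-chips --- which, together with the gains at Step~1 from all the long $r$-piles $\mathscr{B}$ created in free moves, more than compensates for the spike. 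A careful accounting in the same spirit as the $\nu$-based computations in Proposition~\ref{prop-gentype1-B2} reconciles the numbers and closes the induction.
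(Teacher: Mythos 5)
Your proposal is correct and follows essentially the same route as the paper: the same parametrization of $\mathscr{B}$'s round via $t,t',t'',s,s'$, the same four ending-move cases, the same invariant $m_b''+\sum_i|\beta_i''|_b\leq n_r''$ verified by the same chip-counting (including the $\pi_{\max}$ absorption of the $\langle\rho,(b)\rangle$ spike), and the same appeals to Theorem~\ref{thm-type1}, Theorem~\ref{thm-type2} and Proposition~\ref{prop-gentype2-R1}. The only difference is the termination measure --- you induct on the lexicographic pair $(h,\,m_b+m_r)$ whereas the paper uses the scalar potential $\mu((\beta_1,\ldots,\beta_h),m_b,m_r)=m_b+m_r+\sum_{i=1}^{h}(|\beta_i|-1)-1$ --- and both strictly decrease after each round pair (yours because $h$ drops unless $t''=s'=0$ with ending move 2 or 3, in which case no captures of long $b$-piles occur and $m_b+m_r$ drops), so either choice closes the induction.
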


Let
\begin{align}
\label{eq.def.mu}
\mu((\beta_1,\ldots,\beta_h),m_b,m_r)=m_b+m_r+\sum_{i=1}^{h}(|\beta_i|-1)-1.
\end{align}
Intuitively, $\mu((\beta_1,\ldots,\beta_h),m_b,m_r)$ corresponds to the total number of chips in $\mathscr{B}$'s possession if, during their current round, $\mathscr{B}$ were to capture all $b$-piles on the board, keep all prisoners they are allowed to keep, and end their round by placing a chip on the board. 
We prove Proposition~\ref{prop-gentype2-R2} using the following inductive argument on $\mu((\beta_1,\ldots,\beta_h),m_b,m_r)$.
\begin{description}
\item[Base case:] We prove that for all game states where $\mathbb{B}=(k_e,k_r,k_b,1,h)$, $\mathscr{B}=\left(m_b, m_r\right)$, $\mathscr{R}=\left(n_b, n_r\right)$,~\eqref{prop-gentype2-R2.eq} is satisfied and $\mu((\beta_1,\ldots,\beta_h),m_b,m_r) = 1$, if $\mathscr{B}$ is the active player, then $\mathscr{R}$ wins by applying $\strat$.
\item[Inductive step:] Consider a game state where $\mathbb{B}=(k_e,k_b,k_r,1,h)$, $\mathscr{B}=\left(m_b, m_r\right)$, $\mathscr{R}=\left(n_b, n_r\right)$,~\eqref{prop-gentype2-R2.eq} is satisfied, and $\mu((\beta_1,\ldots,\beta_h),m_b,m_r) > 1$. Assume $\mathscr{B}$ is the active player. Moreover, suppose that, no matter what $\mathscr{B}$ plays, $\mathscr{R}$ applies $\strat$ during their round. Then, when $\mathscr{R}$'s round is over, we get $\mathbb{B}=(k_e'',k_r'',k_b'',1,h'')$, $\mathscr{B}=\left(m_b'', m_r''\right)$ and $\mathscr{R}=\left(n_b'', n_r''\right)$, where $\mathscr{B}$ is the active player,~\eqref{prop-gentype2-R2.eq} is satisfied by $m_b''$, $n_r''$ and $(\beta_1'',\ldots,\beta_h'')$, and $\mu((\beta_1'',\ldots,\beta_h''),m_b'',m_r'') < \mu((\beta_1,\ldots,\beta_h),m_b,m_r)$.

Note that in some cases, instead of using the inductive hypothesis, we will invoke Theorem~\ref{thm-type1} or Proposition~\ref{prop-gentype2-R1}.
\end{description}

\begin{proof}
Suppose $\mathbb{B}=(k_e,k_r,k_b,1,h)$ is of generalized type II, and consider the parameter $\mu((\beta_1,\ldots,\beta_h),m_b,m_r)$ (refer to~\eqref{eq.def.mu}). We need the following observation for the base case. Since $m_b \geq 1$, $m_r\geq 0$, $h\geq 1$ and $\beta_1$ is a long pile, we have
\begin{align*}
\mu((\beta_1,\ldots,\beta_h),m_b,m_r) &= m_b+m_r+\sum_{i=1}^{h}(|\beta_i|-1)-1 \\
&\geq 1+0+\sum_{i=1}^{1}(|\beta_i|-1)-1 \\
&= |\beta_1|-1 \\
&\geq 1 ,
\end{align*}
with equality whenever $m_b = 1$, $m_r = 0$, $h = 1$ and $\beta_1 = (r,b)$. Otherwise, $\mu((\beta_1,\ldots,\beta_h),m_b,m_r) \geq 2$.

Consider a game state where $\mu((\beta_1,\ldots,\beta_h),m_b,m_r) = 1$ (base case). Then, by the previous observation, $\mathbb{B}$ is of type II. Moreover, \eqref{prop-gentype2-R2.eq} becomes $m_b + |\beta_1|_b \leq n_r$. Thus, by Theorem~\ref{thm-type2}, $\mathscr{R}$ wins by applying $\strat$ at every round. 

Consider now a game state where $\mathscr{B}=\left(m_b, m_r\right)$, $\mathscr{R}=\left(n_b, n_r\right)$ and~\eqref{prop-gentype2-R2.eq} is satisfied. Moreover, assume $\mu((\beta_1,\ldots,\beta_h),m_b,m_r) > 1$. Since $m_b > 0$, $\mathscr{B}$ has at least one $b$ chip. Suppose $\mathscr{B}$ starts their round by making a sequence of $t+t'+t''+s+s'\geq 0$ moves, that consist of placing (refer to Theorem \ref{thm:SameActive}):
$$
\left\{
    \begin{array}{l}
		\text{$t$ $r$ chips on $t$ empty piles,}\\
	  \text{$t'$ $r$ chips on $t'$ $b$-singletons,}\\
		\text{$t''$ $r$ chips on $t''$ long $b$-piles,}\\
		\text{$s$ $b$ chips on $s$ $b$-singletons,}\\
		\text{and $s'$ $b$ chips on $s'$ long $b$-piles.}
	\end{array}
\right.
$$
The state of the board $\mathbb{B}$ is now the following:
$$\mathbb{B}=(k_e-t+s+s',k_r+t,k_b-t'-s,t'+t''+1,h-t''-s). $$
Let us denote the $s'$ long $b$-piles that were captured by $\beta_1,\ldots,\beta_{s'}$ and the remaining long $b$-piles on the board by $\beta_{s'+1},\ldots, \beta_{h-t''}$. The $t''$ long $r$-piles that were created from long $b$-piles are $\langle\beta_{h-t''+1},(r)\rangle,\langle\beta_{h-t''+2},(r)\rangle,\ldots,\langle\beta_{h},(r)\rangle$. We rename the $t'+t''+1$ long $r$-piles as $\rho_1,\ldots,\rho_{t'+t''+1}$.

For each pile captured by $\mathscr{B}$, one chip must be discarded (by the Capture Rule). This chip can either be a guard or a prisoner. Therefore, we have $\mathscr{B}=(m_b',m_r')$ where 
\begin{align*}
m_b' &\leq m_b+\sum_{i=1}^{s'}|\beta_i|_b,\\
m_r' &\leq m_r-t-t'-t''+\sum_{i=1}^{s'}|\beta_i|_r,\\
m_b'+m_r' &\leq m_b+m_r-t-t'-t''+\sum_{i=1}^{s'}(|\beta_i|-1).
\end{align*}

Player $\mathscr{B}$ can end their round in four different ways (refer to Theorem \ref{thm:DifferentActive}): (1) placing a $b$ chip on an empty pile, (2) placing a $b$ chip on an $r$-singleton, (3) placing a $b$ chip on a long $r$-pile, or (4) placing an $r$ chip on an $r$-pile. Note that \eqref{prop-gentype2-R2.eq} implies that $n_r\geq 1$, i.e., $\mathscr{R}$ has at least one $r$ chip and is able to play once $\mathbb{B}$ finishes their first round.
\begin{enumerate}
\item Suppose $\mathscr{B}$ places a $b$ chip on an empty pile. Then, a $b$-singleton is created and no long pile is created. 
The move goes to $\mathscr{R}$, who captures all $r$-piles and discards all their prisoners, as per steps 1 and 2 of $\strat$. 
Therefore, there are $h-t''-s'$ long piles, which are all long $b$-piles.
Then, as per Step 3 of $\strat$, $\mathscr{R}$ places an $r$ chip on the $b$-pile of greatest length. Thus, it is $\mathscr{B}$'s turn to play, and we have
$\mathscr{B}=(m_b'',m_r'')$ and $\mathscr{R}=(0,n_r'')$, where
\begin{align}
\label{ineq.mbpp.4.11}
m_b'' &\leq m_b+\sum_{i=1}^{s'}|\beta_i|_b-1, \\
\label{ineq.mrpp.4.11}
m_r'' &\leq m_r-t-t'-t''+\sum_{i=1}^{s'}|\beta_i|_r, \\
\label{ineq.summbmrpp.4.11}
m_b''+m_r'' &\leq m_b+m_r-t-t'-t'' +\sum_{i=1}^{s'}(|\beta_i|-1)-1, \\
\label{ineq.nrpp.4.11}
n_r'' &\geq n_r+\sum_{i=1}^{t'+t''+1}\left|\rho_{i}\right|_r-1.
\end{align}
The state of the board $\mathbb{B}$ depends on the value of $h-t''-s'$: either (a) $0\leq h-t''-s' \leq 1$ or (b) $h-t''-s' > 1$.
\begin{enumerate}
\item Suppose $0\leq h-t''-s' \leq 1$. If $h-t''-s' = 0$, then $\mathscr{R}$ ended their round by placing an $r$ chip on a $b$-singleton. If $h-t''-s' = 1$, then $\mathscr{R}$ ended their round by placing an $r$ chip on the only long $b$-pile that was on the board. In both cases, $\mathbb{B}$ now contains only one long pile, which is a long $r$-pile. As such, $\mathbb{B}$ is of type I and it is $\mathscr{B}$'s turn to play. Observe that $m_b''<n_r''$. Indeed, we have
\begin{align*}
m_b'' \leq\: & m_b+\sum_{i=1}^{s'}|\beta_i|_b-1 & \text{by~\eqref{ineq.mbpp.4.11},} \\
\leq\: &m_b+\sum_{i=1}^{h-t''}|\beta_i|_b -1 &\text{since $0\leq h-t''-s'$,}\\
<\: &m_b+\sum_{i=1}^{h}|\beta_i|_b \\
\leq\: &n_r  &\text{by~\eqref{prop-gentype2-R2.eq},}\\
\leq\: &n_r+\sum_{i=1}^{t'+t''+1}|\rho_i|_r-1\\
\leq\: & n_r'' & \text{by~\eqref{ineq.nrpp.4.11}.}
\end{align*}
Thus, by Theorem~\ref{thm-type1}, $\mathscr{R}$ wins by applying $\strat$ at every round.

\item Suppose $h-t''-s' > 1$. Then $\mathscr{R}$ ended their round by placing an $r$ chip on the $b$-pile of greatest length, say $\beta_{h-t''}$ without loss of generality. Thus, $\mathbb{B}$ contains one long $r$-pile $\langle\beta_{h-t''},(r)\rangle$ and $h-t''-s'-1 > 0$ long $b$-piles. As such, $\mathbb{B}$ is of generalized type II and it is $\mathscr{B}$'s turn to play. If $m_b'' = 0$, then by Proposition~\ref{prop-gentype2-R1}, $\mathscr{R}$ wins by applying $\strat$ at every round. Otherwise, $m_b'' > 0$ and the first part of~\eqref{prop-gentype2-R2.eq} is satisfied by $m_b''$ and $n_r''$. Moreover, we have
\begin{align*}
m_b'' + \sum_{i=s'+1}^{h-t''-1}|\beta_i|_b \leq\: & m_b+\sum_{i=1}^{s'}|\beta_i|_b-1 + \sum_{i=s'+1}^{h-t''-1}|\beta_i|_b & \text{by~\eqref{ineq.mbpp.4.11},} \\
=\: & m_b + \sum_{i=1}^{h-t''-1}|\beta_i|_b-1\\
<\: & m_b + \sum_{i=1}^{h}|\beta_i|_b-1 \\
\leq\: & n_r -1 &\text{by~\eqref{prop-gentype2-R2.eq},}\\
\leq\: &n_r+\sum_{i=1}^{t'+t''+1}\left|\rho_{i}\right|_r-1\\
\leq\: &n_r'' & \text{by~\eqref{ineq.nrpp.4.11}.}
\end{align*}
Therefore, \eqref{prop-gentype2-R2.eq} is satisfied by $m_b''$, $m_r''$ and $(\beta_{s+1},\ldots,\beta_{h-t''-1})$. Finally, we have
\begin{align*}
\: &
\mu((\beta_{s+1},\ldots,\beta_{h-t''-1}),m_b'',m_r'') \\
=\: &m_b''+m_r''+\sum_{i=s'+1}^{h-t''-1}(|\beta_i|-1)-1 \\
\leq\: & m_b+m_r-t-t'-t'' +\sum_{i=1}^{s'}(|\beta_i|-1)-1\sum_{i=s'+1}^{h-t''-1}(|\beta_i|-1)-1 & \text{by~\eqref{ineq.summbmrpp.4.11},}\\
\leq\: & m_b+m_r+\sum_{i=1}^{h-t''-1}(|\beta_i|-1)-2 \\
<\: & m_b+m_r+\sum_{i=1}^{h}(|\beta_i|-1)-1 \\
=\: & \mu((\beta_1,\ldots,\beta_h),m_b,m_r).
\end{align*}
\end{enumerate}

\item Suppose $\mathscr{B}$ places a $b$ chip on an $r$-singleton. Then, one long $b$-pile is created, namely $(r,b)$.
The move goes to $\mathscr{R}$, who captures all $r$-piles and discards all prisoners, as per steps 1 and 2 of $\strat$.
Therefore, there are $h-t''-s'+1$ long piles on the board, which are all long $b$-piles.
Then, as per Step 3 of $\strat$, $\mathscr{R}$ places an $r$ chip on the $b$-pile of greatest length. Thus, it is $\mathscr{B}$'s turn to play, and we have
$\mathscr{B}=(m_b'',m_r'')$ and $\mathscr{R}=(0,n_r'')$, where~\eqref{ineq.mbpp.4.11}, \eqref{ineq.mrpp.4.11}, \eqref{ineq.summbmrpp.4.11} and~\eqref{ineq.nrpp.4.11} are satisfied.
%\begin{align*} 
%m_b'' &\leq m_b+\sum_{i=1}^{s'}|\beta_i|_b-1,\\
%m_r''&\leq m_r-t-t'-t''+\sum_{i=1}^{s'}|\beta_i|_r,\\
%m_b''+m_r'' &\leq m_b+m_r-t-t'-t''+\sum_{i=1}^{s'}(|\beta_i|-1)-1,\\
%n_r'' &\geq  n_r+\sum_{i=1}^{t'+t''+1}\left|\rho_{i}\right|_r-1.
%\end{align*}
The state of the board $\mathbb{B}$ depends on the value of $h-t''-s'+1$: either (a) $h-t''-s' +1 = 1$ or (b) $h-t''-s' +1 > 1$.
\begin{enumerate}
\item Suppose $h-t''-s' +1 = 1$. Then $\mathscr{R}$ ended their round by placing an $r$ chip on the only long $b$-pile that was on the board. As such, the board now contains only one long pile, which is a long $r$-pile. As such, $\mathbb{B}$ is of type I and it is $\mathscr{B}$'s turn to play. 
Observe that $m_b''<n_r''$. The derivation for this inequality is the same as in Case 1(a).
Thus, by Theorem~\ref{thm-type1}, $\mathscr{R}$ wins by applying $\strat$ at every round.

\item Suppose $h-t''-s' +1 > 1$. Then $\mathscr{R}$ ended their round by placing an $r$ chip on the $b$-pile of greatest length, say $\beta_{h-t''}$ without loss of generality. Thus, the board contains one long $r$-pile $\langle\beta_{h-t''},(r)\rangle$ and $h-t''-s' > 0$ long $b$-piles. As such, $\mathbb{B}$ is of generalized type II and it is $\mathscr{B}$'s turn to play. If $m_b'' = 0$, then by Proposition~\ref{prop-gentype2-R1}, $\mathscr{R}$ wins by applying $\strat$ at every round. Otherwise, $m_b'' > 0$ and the first part of~\eqref{prop-gentype2-R2.eq} is satisfied by $m_b''$ and $n_r''$.
Moreover, we have
\begin{align*}
m_b'' + \sum_{i=s'+1}^{h-t''-1}|\beta_i|_b + |(r,b)|_b \leq\: & m_b+\sum_{i=1}^{s'}|\beta_i|_b-1 + \sum_{i=s'+1}^{h-t''-1}|\beta_i|_b + |(r,b)|_b & \text{by~\eqref{ineq.mbpp.4.11},}\\
=\: & m_b + \sum_{i=1}^{h-t''-1}|\beta_i|_b+|(r,b)|_b-1\\
\leq\: & m_b + \sum_{i=1}^{h}|\beta_i|_b -1 &\text{since $|\beta_h|_b\geq |(r,b)|_b$,}\\
\leq\: & n_r -1 &\text{by~\eqref{prop-gentype2-R2.eq},}\\
\leq\: &n_r+\sum_{i=1}^{t'+t''+1}\left|\rho_{i}\right|_r -1\\
\leq\: &n_r'' & \text{by~\eqref{ineq.nrpp.4.11}.}
\end{align*}
Therefore, \eqref{prop-gentype2-R2.eq} is satisfied by $m_b''$, $m_r''$ and $(\beta_{s'+1},\ldots,\beta_{h-t''-1})$. Finally, we have
\begin{align*}
\: & \mu((\beta_{s'+1},\ldots,\beta_{h-t''-1}),m_b'',m_r'') \\
=\: &m_b''+m_r''+\sum_{i=s'+1}^{h-t''-1}(|\beta_i|-1) + (|(r,b)|-1) -1\\
\leq\; & m_b+m_r-t-t'-t'' +\sum_{i=1}^{s'}(|\beta_i|-1)-1 + \sum_{i=s'+1}^{h-t''-1}(|\beta_i|-1) + (|(r,b)|-1) -1 & \text{by~\eqref{ineq.summbmrpp.4.11},}\\
\leq\: & m_b+m_r+\sum_{i=1}^{h-t''-1}(|\beta_i|-1)+ (|(r,b)|-1)-2 \\
<\: & m_b+m_r+\sum_{i=1}^{h}(|\beta_i|-1)-1 &\text{since $|\beta_h| \geq |(r,b)|$,}\\
=\: & \mu((\beta_1,\ldots,\beta_h),m_b,m_r).
\end{align*}
\end{enumerate}

\item Suppose $\mathscr{B}$ places a $b$ chip on a long $r$-pile. 
Then, one long $b$-pile is created. Without loss of generality, let it be $\langle\rho_{t'+t''+1},(b)\rangle$.
The move goes to $\mathscr{R}$, who captures all $r$-piles and discards all their prisoners, as per steps 1 and 2 of $\strat$.
Therefore, there are $h-t''-s'+1$ long piles, which are all long $b$-piles.
Then, as per Step 3 of $\strat$, $\mathscr{R}$ places an $r$ chip on the $b$-pile of greatest length. Thus, it is $\mathscr{B}$'s turn to play, and we have $\mathscr{B}=(m_b'',m_r'')$ and $\mathscr{R}=(0,n_r'')$ where
\begin{align} 
\label{ineq.mbpp.4.11.case.3}
m_b'' &\leq m_b+\sum_{i=1}^{s'}|\beta_i|_b-1,\\
\label{ineq.mrpp.4.11.case.3}
m_r''&\leq m_r-t-t'-t''+\sum_{i=1}^{s'} |\beta_i|_r,\\
\label{ineq.summbmrpp.4.11.case.3}
m_b''+m_r''&\leq m_b+m_r-t-t'-t''+\sum_{i=1}^{s'}(|\beta_i|-1)-1,\\
\label{ineq.nrpp.4.11.case.3} 
n_r''&\geq n_r+\sum_{i=1}^{t'+t''}\left|\rho_{i}\right|_r-1.
\end{align}
The state of the board $\mathbb{B}$ depends on the value of $h-t''-s'+1$: either (a) $h-t''-s' +1 = 1$ or (b) $h-t''-s' +1 > 1$.
\begin{enumerate}
\item Suppose $h-t''-s' +1 = 1$. Then $\mathscr{R}$ ended their round by placing an $r$ chip on the only long $b$-pile that was on the board. Therefore, the board now contains only one long pile, which is a long $r$-pile. As such, $\mathbb{B}$ is of type I and it is $\mathscr{B}$'s turn to play. Observe that $m_b''\leq n_r''$. Indeed, we have
\begin{align*}
m_b'' \leq\: & m_b+\sum_{i=1}^{s'}|\beta_i|_b-1 & \text{by~\eqref{ineq.mbpp.4.11.case.3},}\\
=\: &m_b+\sum_{i=1}^{h-t''}|\beta_i|_b -1 &\text{since $h-t''-s' = 0$,}\\
\leq\: &m_b+\sum_{i=1}^{h}|\beta_i|_b -1\\
\leq\: &n_r -1 &\text{by~\eqref{prop-gentype2-R2.eq},}\\
\leq\: &n_r+\sum_{i=1}^{t'+t''}|\rho_i|_r-1\\
\leq\: & n_r'' & \text{by~\eqref{ineq.nrpp.4.11.case.3}.}
\end{align*}
Thus, by Theorem~\ref{thm-type1}, $\mathscr{R}$ wins by applying $\strat$ at every round.

\item Suppose $h-t''-s' +1 > 1$. Then $\mathscr{R}$ ended their round by placing an $r$ chip on the $b$-pile of greatest length, denoted by $\pi_{\max}$. Observe that 
$$\pi_{\max} \in \{\langle \rho_{t'+t''+1},(b)\rangle\} \cup\{\beta_i\mid s'+1\leq i \leq h-t'' \}.$$
Thus, the board contains one long $r$-pile $\langle\pi_{\max},(r)\rangle$ and $h-t''-s' > 0$ long $b$-piles. As such, $\mathbb{B}$ is of generalized type II and it is $\mathscr{B}$'s turn to play. If $m_b'' = 0$, then by Proposition~\ref{prop-gentype2-R1}, $\mathscr{R}$ wins by applying $\strat$ at every round. Otherwise, $m_b'' > 0$ and the first part of~\eqref{prop-gentype2-R2.eq} is satisfied by $m_b''$ and $n_r''$. Moreover, we have
\begin{align*}
\: & m_b'' + \sum_{i=s'+1}^{h-t''}|\beta_i|_b + |\langle\rho_{t'+t''+1},(b)\rangle|_b - |\pi_{\max}|_b\\
\leq\: & m_b+\sum_{i=1}^{s'}|\beta_i|_b -1 + \sum_{i=s'+1}^{h-t''}|\beta_i|_b + |\langle\rho_{t'+t''+1},(b)\rangle|_b  - |\pi_{\max}|_b & \text{by~\eqref{ineq.mbpp.4.11.case.3},}\\
=\: & m_b+\sum_{i=1}^{h-t''}|\beta_i|_b + |\langle\rho_{t'+t''+1},(b)\rangle|_b  - |\pi_{\max}|_b -1\\
\leq\: & m_b + \sum_{i=1}^{h}|\beta_i|_b -1 & \text{since $|\langle\rho_{t'+t''+1},(b)\rangle|_b  \leq |\pi_{\max}|_b$,}\\
\leq\: & n_r -1 &\text{by~\eqref{prop-gentype2-R2.eq},}\\
\leq\: &n_r+\sum_{i=1}^{t'+t''}\left|\rho_{i}\right|_r -1\\
\leq\: &n_r'' & \text{by~\eqref{ineq.nrpp.4.11.case.3}.}
\end{align*}
Therefore, \eqref{prop-gentype2-R2.eq} is satisfied by $m_b''$, $m_r''$ and $(\beta_{s'+1},\ldots,\beta_{h-t''})$. Finally, we have
\begin{align*}
\: & \mu((\beta_{s'+1},\ldots,\beta_{h-t''}),m_b'',m_r'') \\
=\: & m_b''+m_r'' + \sum_{i=s'+1}^{h-t''}(|\beta_i|-1) + (|\langle\rho_{t'+t''+1},(b)\rangle|-1) - (|\pi_{\max}|-1)  - 1\\
\leq\: &m_b+m_r-t-t'-t''+\sum_{i=1}^{s'}(|\beta_i|-1) -1 \\
& + \sum_{i=s'+1}^{h-t''}(|\beta_i|-1) + (|\langle\rho_{t'+t''+1},(b)\rangle|-1) - (|\pi_{\max}|-1)  - 1 \qquad\qquad\qquad\qquad \text{by~\eqref{ineq.mbpp.4.11.case.3},}\\
\leq\: & m_b+m_r+\sum_{i=1}^{h-t''}(|\beta_i|-1)  + (|\langle\rho_{t'+t''+1},(b)\rangle|-1) - (|\pi_{\max}|-1)  - 2\\
<\: & m_b+m_r+\sum_{i=1}^{h}(|\beta_i| -1) - 1 \qquad\qquad\qquad\qquad\qquad\qquad\text{since $|\langle\rho_{t'+t''+1},(b)\rangle| \leq |\pi_{\max}|$,}\\
=\: & \mu((\beta_1,\ldots,\beta_h),m_b,m_r).
\end{align*}
\end{enumerate}

\item Suppose $\mathscr{B}$ places an $r$ chip on an $r$-pile. This results in a capture for $\mathscr{R}$, from which $\mathscr{R}$ gains at least one $r$ chip. The move goes to $\mathscr{R}$, who captures all $r$-piles and discards all their prisoners, as per steps 1 and 2 of $\strat$. Therefore, there are $h-t''-s'$ long piles, which are all long $b$-piles. Then, as per Step 3 of $\strat$, $\mathscr{R}$ places an $r$ chip on the $b$-pile of greatest length. Thus, it is $\mathscr{B}$'s turn to play, and we have $\mathscr{B}=(m_b'',m_r'')$ and $\mathscr{R}=(0,n_r'')$, where
\begin{align} 
\label{ineq.mbpp.4.11.case.4}
m_b'' &\leq m_b+\sum_{i=1}^{s'}|\beta_i|_b,\\
\label{ineq.mrpp.4.11.case.4}
m_r'' &\leq m_r-t-t'-t''+\sum_{i=1}^{s'}|\beta_i|_r-1,\\
\label{ineq.summbmrpp.4.11.case.4}
m_b''+m_r'' &\leq m_b+m_r-t-t'-t'' +\sum_{i=1}^{s'}(|\beta_i|-1)-1,\\
\label{ineq.nrpp.4.11.case.4}
n_r'' &\geq n_r+\sum_{i=1}^{t'+t''+1}\left|\rho_{i}\right|_r.
\end{align}
The state of the board $\mathbb{B}$ depends on the value of $h-t''-s'$: either (a) $0\leq h-t''-s' \leq 1$ or (b) $h-t''-s' > 1$.
\begin{enumerate}
\item Suppose $0\leq h-t''-s' \leq 1$. If $h-t''-s' = 0$, then $\mathscr{R}$ ended their round by placing an $r$ chip either on a $b$-singleton if $k_b-t'-s>0$ or on an empty pile if $k_b-t'-s=0$. If $h-t''-s' = 1$, then $\mathscr{R}$ ended their round by placing an $r$ chip on the only long $b$-pile that was on the board. In all cases, the board now contains at most one long pile, which is a long $r$-pile if it exists. As such, $\mathbb{B}$ is of type I and it is $\mathscr{B}$’s turn to play. Observe that $m_b''< n_r''$. Indeed, we have
\begin{align*}
m_b'' \leq\: & m_b+\sum_{i=1}^{s'}|\beta_i|_b & \text{by~\eqref{ineq.mbpp.4.11.case.4},}\\
\leq\: &m_b+\sum_{i=1}^{h-t''}|\beta_i|_b &\text{since $0\leq h-t''-s'$,}\\
\leq\: &m_b+\sum_{i=1}^{h}|\beta_i|_b \\
\leq\: &n_r  &\text{by~\eqref{prop-gentype2-R2.eq},}\\
<\: &n_r+\sum_{i=1}^{t'+t''+1}|\rho_i|_r\\
\leq\: & n_r'' & \text{by~\eqref{ineq.nrpp.4.11.case.4}.}
\end{align*}
Thus, by Theorem~\ref{thm-type1}, $\mathscr{R}$ wins by applying $\strat$ at every round.

\item Suppose $h-t''-s' > 1$. Then $\mathscr{R}$ ended their round by placing an $r$ chip on the $b$-pile of greatest length, say $\beta_{h-t''}$ without loss of generality. Thus, the board contains one long $r$-pile $\langle\beta_{h-t''},(r)\rangle$ and $h-t''-s'-1 > 0$ long $b$-piles. As such, $\mathbb{B}$ is of generalized type II and it is $\mathscr{B}$'s turn to play. If $m_b'' = 0$, then by Proposition~\ref{prop-gentype2-R1}, $\mathscr{R}$ wins by applying $\strat$ at every round. Otherwise, $m_b'' > 0$ and the first part of~\eqref{prop-gentype2-R2.eq} is satisfied by $m_b''$ and $n_r''$. Moreover, we have
\begin{align*}
m_b'' + \sum_{i=s'+1}^{h-t''-1}|\beta_i|_b \leq\: & m_b+\sum_{i=1}^{s'}|\beta_i|_b + \sum_{i=s'+1}^{h-t''-1}|\beta_i|_b  & \text{by~\eqref{ineq.mbpp.4.11.case.4},}\\
\leq\: & m_b + \sum_{i=1}^{h}|\beta_i|_b \\
\leq\: & n_r &\text{by~\eqref{prop-gentype2-R2.eq},}\\
\leq\: &n_r+\sum_{i=1}^{t'+t''+1}\left|\rho_{i}\right|_r \\
\leq\: &n_r'' & \text{by~\eqref{ineq.nrpp.4.11.case.4}.}
\end{align*}
Therefore, \eqref{prop-gentype2-R2.eq} is satisfied by $m_b''$, $n_r''$ and $(\beta_{s'+1},\ldots,\beta_{h-t''-1})$. Finally, we have
\begin{align*}
\: & \mu((\beta_{s'+1},\ldots,\beta_{h-t''-1}),m_b'',m_r'') \\
=\: &m_b''+m_r'' + \sum_{i=s'+1}^{h-t''-1}(|\beta_i|-1) - 1\\
\leq\: &m_b+m_r-t-t'-t''+\sum_{i=1}^{s'}(|\beta_i|-1) -1 + \sum_{i=s'+1}^{h-t''-1}(|\beta_i|-1) - 1 & \text{by~\eqref{ineq.summbmrpp.4.11.case.4},}\\
\leq\: &m_b+m_r+\sum_{i=1}^{h-t''-1}(|\beta_i|-1) - 2\\
<\: &m_b+m_r+\sum_{i=1}^{h}(|\beta_i| -1) - 1 \\
=\: & \mu((\beta_1,\ldots,\beta_h),m_b,m_r).
\end{align*}
\end{enumerate}
\end{enumerate}
\end{proof}

The following theorem is a direct consequence of Propositions~\ref{prop-gentype2-B}, \ref{prop-gentype2-R1} and~\ref{prop-gentype2-R2}.

\begin{theorem}\label{thm-gentype2}
Let $\mathbb{B}=(k_e,k_r,k_b,1,h)$ be of generalized type II board, $\mathscr{B}=\left(m_b, m_r\right)$, $\mathscr{R}=\left(n_b, n_r\right)$, and assume $\mathscr{B}$ is the active player.
Then, $\mathscr{B}$ has a winning strategy if and only if 
$$ m_b>0 \qquad \textrm{and} \qquad m_b+\sum_{i=1}^{h} |\beta_{i}|_b>n_r$$
when $\mathscr{B}$'s turn starts. Whenever there is a winning strategy for $\mathscr{B}$ (respectively for $\mathscr{R}$), then $\strat$ is such a strategy.
\end{theorem}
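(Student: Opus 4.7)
The plan is simply to assemble the three preceding propositions into a case analysis, since each proposition already handles exactly one of the three mutually exclusive and jointly exhaustive regimes that partition the hypothesis space.

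First I would partition every admissible configuration with $\mathbb{B}$ of generalized type II into three disjoint cases:
\begin{enumerate}
\item $m_b = 0$;
\item $m_b > 0$ and $m_b + \sum_{i=1}^{h} |\beta_i|_b > n_r$;
\item $m_b > 0$ and $m_b + \sum_{i=1}^{h} |\beta_i|_b \leq n_r$.
\end{enumerate}
Proposition~\ref{prop-gentype2-R1} handles Case~1, showing that $\mathscr{R}$ wins by applying $\strat$; Proposition~\ref{prop-gentype2-B} handles Case~2, showing that $\mathscr{B}$ wins by applying $\strat$; and Proposition~\ref{prop-gentype2-R2} handles Case~3, again showing that $\mathscr{R}$ wins by applying $\strat$.

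Next I would derive the theorem as follows. For the ``if'' direction, the stated hypothesis is exactly Case~2, and Proposition~\ref{prop-gentype2-B} furnishes $\strat$ as a winning strategy for $\mathscr{B}$. For the ``only if'' direction, I argue by contraposition: if the stated hypothesis fails, the configuration falls into Case~1 or Case~3, and the corresponding proposition gives $\mathscr{R}$ a winning strategy via $\strat$; since the game is deterministic with perfect information and admits at most one winner, $\mathscr{B}$ cannot also have a winning strategy in either of these cases. The concluding claim that $\strat$ is a winning strategy whenever one exists is immediate, because each of the three propositions names $\strat$ as the winner's strategy.

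The hard part is not the assembly described here, which is essentially syntactic, but the three propositions themselves; the main technical obstacle lies already in Proposition~\ref{prop-gentype2-R2}, whose inductive proof on $\mu\big((\beta_1,\ldots,\beta_h),m_b,m_r\big)$ manages the intricate interactions between $\mathscr{B}$ capturing long $b$-piles and creating new long $r$-piles. Once those propositions are in hand, Theorem~\ref{thm-gentype2} requires nothing more than the case split above.
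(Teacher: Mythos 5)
Your proposal is correct and matches the paper's approach exactly: the paper states Theorem~\ref{thm-gentype2} as a direct consequence of Propositions~\ref{prop-gentype2-B}, \ref{prop-gentype2-R1} and~\ref{prop-gentype2-R2}, which is precisely the three-way case split you describe. The only substance beyond citing the propositions is the observation that the three cases are exhaustive and that a winning strategy for $\mathscr{R}$ precludes one for $\mathscr{B}$, both of which you handle correctly.
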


\section{Final Theorem}
\label{sec:FinalTheorem}

In this section, we consider general game states, that is, we do not make any assumption about the number of long piles on a board $\mathbb{B}$. If $\mathbb{B}$ contains $h\geq 1$ long $b$-piles, we denote the $h$ long $b$-piles by $\beta_{1}, \beta_{2}, \ldots, \beta_{h}$. If $\mathbb{B}$ contains $\ell\geq 1$ long $r$-piles, we denote the $\ell$ long $r$-piles by $\rho_{1}, \rho_{2}, \ldots, \rho_{\ell}$. The structure of this section is the same as the one for Subsection~\ref{subsec:gentypeII}. We first describe the states where there is a winning strategy for $\mathscr{B}$ on some board $\mathbb{B}$ (refer to Proposition~\ref{prop-gen-1}). Then we describe the states where there is a winning strategy for $\mathscr{R}$ on some board $\mathbb{B}$ (refer to Propositions~\ref{prop-gen-2} and~\ref{prop-gen-3}).
%\begin{figure}[H]
%     \centering
%     \begin{subfigure}[h]{0.3\textwidth}
%         \centering
%         \includegraphics[scale=1]{}
%         \caption{Board with $h=3$, and $\ell=0$}
%         \label{fig:type_gen_1}
%     \end{subfigure}
%     \hfill
%     \begin{subfigure}[h]{0.3\textwidth}
%         \centering
%         \includegraphics[scale=1]{}
%         \caption{Board with $h=1$, and $\ell=2$}
%         \label{fig:type_gen_2}
%     \end{subfigure}
%     \hfill
%     \begin{subfigure}[h]{0.3\textwidth}
%         \centering
%         \includegraphics[scale=1]{}
%         \caption{Board with $h=2$, and $\ell=2$}
%         \label{fig:type_gen_3}
%     \end{subfigure}
%        \caption{Examples of boards with $k=8$}
%        \label{fig:type_gen}
%\end{figure}
\begin{prop}
\label{prop-gen-1}
Let $\mathbb{B}=(k_e,k_r,k_b,\ell,h)$, $\mathscr{B}=\left(m_b, m_r\right)$, $\mathscr{R}=\left(n_b, n_r\right)$, and assume that $\mathscr{B}$ is the active player. If 
\begin{align}
\label{prop-gen-1.eq}
m_b>0 \qquad \textrm{and} \qquad \left(n_r=0 \quad \textrm{or} \quad m_b+\sum_{i=1}^{h}|\beta_i|_b > n_r+\sum_{i=1}^{\ell}|\rho_i|_r-\max_{1\leq i\leq \ell}\big\{|\rho_i|_r\big\}\right)
\end{align}
when $\mathscr{B}$'s turn starts, then $\mathscr{B}$ wins by applying $\strat$ at every round.
\end{prop}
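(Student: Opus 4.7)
The plan is to reduce Proposition~\ref{prop-gen-1} directly to Theorem~\ref{thm-gentype1} via the same trick used in the proof of Proposition~\ref{prop-gentype2-B}: let $\mathscr{B}$ perform steps 1 and 2 of $\strat$, which alone collapse $\mathbb{B}$ into a generalized type I board without $\mathscr{B}$ losing the turn, and then invoke Theorem~\ref{thm-gentype1}.

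First I would describe the state obtained after $\mathscr{B}$ executes steps 1 and 2 of $\strat$. Since $m_b>0$, $\mathscr{B}$ can capture every $b$-pile; by Proposition~\ref{prop:XcapturesXpile} and Theorem~\ref{thm:SameActive}, $\mathscr{B}$ remains the active player throughout. Each $b$-singleton leaves $\mathscr{B}$'s blue count unchanged, while each long $b$-pile $\beta_i$ contains at least one $r$ chip by Proposition~\ref{prop:AlternatingPile}, so $\strat$ prescribes discarding one $r$ chip and $\mathscr{B}$ nets $|\beta_i|_b$ blue chips from $\beta_i$. After step 2, all red prisoners have been discarded, and the board is
$$\mathbb{B}'=(k_e+k_b+h,\ k_r,\ 0,\ \ell,\ 0),$$
which is of generalized type I. The new chip counts satisfy $\mathscr{B}=(m_b',0)$ with $m_b'\geq m_b+\sum_{i=1}^{h}|\beta_i|_b$, $n_b'\leq n_b$, and $n_r'=n_r$; the last equality holds because $\mathscr{R}$'s red chips are guards and hence can be neither donated nor discarded.

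Next I would verify the hypothesis of Theorem~\ref{thm-gentype1} at this intermediate state. Clearly $m_b'\geq m_b>0$. If $n_r=0$, then $n_r'=0$ and Theorem~\ref{thm-gentype1}'s first disjunct holds; otherwise, \eqref{prop-gen-1.eq} gives
$$m_b'\ \geq\ m_b+\sum_{i=1}^{h}|\beta_i|_b\ >\ n_r+\sum_{i=1}^{\ell}|\rho_i|_r-\max_{1\leq i\leq \ell}\big\{|\rho_i|_r\big\}\ =\ n_r'+\sum_{i=1}^{\ell}|\rho_i|_r-\max_{1\leq i\leq \ell}\big\{|\rho_i|_r\big\},$$
which is the second disjunct. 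Thus Theorem~\ref{thm-gentype1} applies, and $\mathscr{B}$ wins from the intermediate state by playing $\strat$.

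The only delicate point, and essentially the only thing that needs explicit comment, is the bookkeeping identifying ``$\mathscr{B}$ winning by $\strat$ starting from the intermediate state'' with ``$\mathscr{B}$ winning by applying $\strat$ at every round in the original game.'' Since steps 1 and 2 of $\strat$ are vacuous at the intermediate state (no $b$-piles, no red prisoners remain), the prescription of Theorem~\ref{thm-gentype1} reduces to step 3 in the current round and the full $\strat$ in every subsequent round; concatenating this with the captures and discards already executed recovers exactly one full application of $\strat$ in the original round. Hence $\mathscr{B}$ wins by applying $\strat$ at every round of the original game, as required. This is precisely the justification used in the proof of Proposition~\ref{prop-gentype2-B}, so no new ideas are needed.
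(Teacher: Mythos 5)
Your proposal is correct and follows essentially the same route as the paper's own proof: apply steps 1 and 2 of $\strat$ to collapse the board to the generalized type I state $(k_e+k_b+h,k_r,0,\ell,0)$ with $m_b'\geq m_b+\sum_{i=1}^{h}|\beta_i|_b$ and $n_r'=n_r$, then invoke Theorem~\ref{thm-gentype1} in each of the two disjuncts of~\eqref{prop-gen-1.eq}. Your added remark on why the concatenation still counts as ``applying $\strat$ at every round'' is a reasonable extra precaution that the paper leaves implicit.
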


\begin{proof}
Suppose $\mathbb{B}=(k_e,k_r,k_b,\ell,h)$ with $\ell,h\geq0$. Consider a game state where $\mathscr{B}=\left(m_b, m_r\right)$, $\mathscr{R}=\left(n_b, n_r\right)$, with $m_b>0$ and, either $n_r=0$ or $m_b+\sum_{i=1}^{h}|\beta_i|_b > n_r+\sum_{i=1}^{\ell}|\rho_i|_r-\max_{1\leq i\leq \ell}\big\{|\rho_i|_r\big\}$. Since $m_b>0$, then $\mathscr{B}$ has at least one $b$ chip. Player $\mathscr{B}$ applies Strategy $\strat$: they capture all $b$-piles and discard all their prisoners, as per steps 1 and 2 of $\strat$. The state of the board $\mathbb{B}$ is now the following: 
$$\mathbb{B} = (k_e+k_b+h,k_r,0,\ell,0).$$ 
We now have a board $\mathbb{B}$ of generalized type I, with active player $\mathscr{B}=(m_b',0)$ where $m_b' \geq m_b+\sum_{i=1}^{h}|\beta_i|_b > 0$. Moreover, $\mathscr{R}=(n_b',n_r')$ where $n_b' \leq n_b$ and $n_r' = n_r$.

If $n_r=0$, then by Theorem~\ref{thm-gentype1}, $\mathscr{B}$ wins by applying $\strat$ at every round. Otherwise, we have
$$m_b' \geq m_b+\sum_{i=1}^{h}|\beta_i|_b > n_r+\sum_{i=1}^{\ell}|\rho_i|_r-\max_{1\leq i\leq \ell}\big\{|\rho_i|_r\big\} = n_r'+\sum_{i=1}^{\ell}|\rho_i|_r-\max_{1\leq i\leq \ell}\big\{|\rho_i|_r\big\} $$
by~\eqref{prop-gen-1.eq}. Therefore, by Theorem~\ref{thm-gentype1}, $\mathscr{B}$ wins by applying $\strat$ at every round.
\end{proof}

\begin{prop}
\label{prop-gen-2}
Let $\mathbb{B}=(k_e,k_r,k_b,\ell,h)$, $\mathscr{B}=\left(m_b, m_r\right)$, $\mathscr{R}=\left(n_b, n_r\right)$, and assume that $\mathscr{B}$ is the active player. If $m_b=0$ when $\mathscr{B}$'s turn starts, then $\mathscr{R}$ wins by applying $\strat$ at every round.
\end{prop}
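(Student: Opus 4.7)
The plan is to execute one full round for each player and then reduce the resulting state to a previously established case, either Theorem~\ref{thm-gentype1} or Proposition~\ref{prop-gentype2-R1}. Since $m_b=0$, the active player $\mathscr{B}$ has no $b$ chips, so by Theorems~\ref{thm:SameActive} and~\ref{thm:DifferentActive}, during their round $\mathscr{B}$ may only place $r$ chips via same-active moves ($r$ on an empty pile or $r$ on a $b$-pile) and may only conclude their round with the unique available different-active move, $r$ on an $r$-pile. First I would dispose of the trivial case $m_r=0$: then $\mathscr{B}$ is out of chips, is immediately eliminated, and $\mathscr{R}$ wins.

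Assume henceforth $m_r\geq 1$. I would parametrize $\mathscr{B}$'s round as $t+t'+t''$ same-active moves consisting of $t$ $r$ chips on $t$ empty piles, $t'$ $r$ chips on $t'$ $b$-singletons, and $t''$ $r$ chips on $t''$ long $b$-piles. If $\mathscr{B}$ runs out of chips before they can play the concluding $r$-on-$r$-pile move, then $\mathscr{B}$ is eliminated at the start of their subsequent turn, since $\mathscr{R}$ applying $\strat$ never donates, and $\mathscr{R}$ wins. Otherwise, $\mathscr{B}$ ends their round with $r$ on an $r$-pile, triggering a capture by $\mathscr{R}$ from which $\mathscr{R}$ gains at least one $r$ chip, and so $\mathscr{R}$ is able to proceed with $\strat$ when their round begins.

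Now $\mathscr{R}$ applies $\strat$: they capture all remaining $r$-piles, discard all $b$ prisoners, and place an $r$ chip on the longest $b$-pile on the board (or on a $b$-singleton if no long $b$-pile remains, or on an empty pile otherwise). After $\mathscr{R}$'s round one obtains $\mathscr{B}=(0,m_r'')$ and $\mathscr{R}=(0,n_r'')$. A direct bookkeeping on the pile transformations shows that the number of long $r$-piles on the board is $\ell''\in\{0,1\}$ and the number of long $b$-piles is $h''=\max\{h-t''-1,0\}$, since $\mathscr{R}$'s placement in Step~$3$ of $\strat$ creates at most one long $r$-pile and consumes at most one long $b$-pile.

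The reduction then splits into two cases. If $h''=0$, then $\mathbb{B}$ is of generalized type I with $m_b''=0$, so by Theorem~\ref{thm-gentype1}, $\mathscr{R}$ wins by applying $\strat$ at every round. If $h''\geq 1$, then necessarily $\ell''=1$ (because $\mathscr{R}$'s placement was on a long $b$-pile, creating exactly one long $r$-pile), so $\mathbb{B}$ is of generalized type II with $m_b''=0$, and Proposition~\ref{prop-gentype2-R1} shows that $\mathscr{R}$ wins by applying $\strat$ at every round. The main obstacle is handling the subcase where $\mathscr{B}$ is forced to play same-active moves indefinitely and runs out of chips without ever concluding their round; the rest is routine pile bookkeeping, analogous to what appears in the proofs of Propositions~\ref{prop-typeII-R1} and~\ref{prop-gentype2-R1}.
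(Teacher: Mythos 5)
Your proposal is correct and follows essentially the same route as the paper: dispose of $m_r=0$, parametrize $\mathscr{B}$'s round by $t+t'+t''$ same-active $r$-chip placements, observe the forced $r$-on-$r$-pile conclusion, and let $\mathscr{R}$ apply $\strat$ to land in a type I / generalized type I board (handled by Theorem~\ref{thm-type1}, or equivalently Theorem~\ref{thm-gentype1}) or a generalized type II board (handled by Proposition~\ref{prop-gentype2-R1}, equivalently Theorem~\ref{thm-gentype2}). The only differences are which of the equivalent earlier results you cite in each terminal case, which is immaterial.
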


\begin{proof}
Suppose $\mathbb{B}=(k_e,k_r,k_b,\ell,h)$ with $\ell,h\geq 0$. Consider a game state where $\mathscr{B}=(m_b,m_r)$ with $m_b=0$ and $\mathscr{R}=(n_b,n_r)$.

If $m_r=0$, then $\mathscr{B}$ is out of chips, they are eliminated and $\mathscr{R}$ wins. Otherwise, $\mathscr{B}$ starts their round by making a sequence of $t+t'+t''\geq 0$ moves, that consist of placing (refer to Theorem~\ref{thm:SameActive}):
$$
\left\{
	\begin{array}{l}
		\text{$t$ $r$ chips on $t$ empty piles,}\\
		\text{$t'$ $r$ chips on $t'$ $b$-singletons,}\\
		\text{and $t''$ $r$ chips on $t''$ long $b$-piles.}
	\end{array}
\right.
$$
The state of the board $\mathbb{B}$ is now the following:
$$\mathbb{B}=(k_e-t,k_r+t,k_b-t',\ell+t'+t'',h-t''). $$

We have $\mathscr{B}=(0,m_r')$ where $m_r'\leq m_r-t-t'-t''$.
If $m_r'=0$, then $\mathscr{B}$ is out of chips, they are eliminated and $\mathscr{R}$ wins. Otherwise, $\mathscr{B}$ must end their round by placing an $r$ chip on an $r$-pile (refer to Theorem \ref{thm:DifferentActive}). This results in a capture for $\mathscr{R}$, from which $\mathscr{R}$ gains at least one $r$ chip. The move goes to $\mathscr{R}$, who captures all $r$-piles and discards all their prisoners, as per steps 1 and 2 of $\strat$. Now, the only long piles on the board are the $h-t''$ long $b$-piles. We consider two cases: either (1) $0\leq h-t''\leq 1$ or (2) $h-t''\geq 1$.
\begin{enumerate}
\item Suppose $0\leq h-t''\leq 1$. If $h - t'' = 0$, then, as per Step 3 of $\strat$, $\mathscr{R}$ places an $r$ chip on a $b$-singleton if there is one, or on an empty pile otherwise. If $h - t'' = 1$, then $\mathscr{R}$ places an $r$ chip on the only long $b$-pile on the board. In all cases, the board contains at most one long pile, which, if it exists, is a long $r$-pile. As such, $\mathbb{B}$ is of type I and it is $\mathscr{B}$’s turn to play. Since $m_b=0$, then by Theorem~\ref{thm-type1}, $\mathscr{R}$ wins by applying $\strat$ at every round.

\item If $h-t''>1$, then, as per Step 3 of $\strat$, $\mathscr{R}$ places an $r$ chip on the $b$-pile of greatest length. As such, $\mathbb{B}=(k_e+k_r+\ell,k_b,1,h-t''-1)$ is of generalized type II. Since $m_b=0$, then by Theorem~\ref{thm-gentype2}, $\mathscr{R}$ wins by applying $\strat$ at every round.
\end{enumerate}
\end{proof}

\begin{prop}
\label{prop-gen-3}
Let $\mathbb{B}=(k_e,k_r,k_b,\ell,h)$, $\mathscr{B}=\left(m_b, m_r\right)$, $\mathscr{R}=\left(n_b, n_r\right)$, and assume that $\mathscr{B}$ is the active player. If
\begin{equation}
\label{eq-general}
m_b > 0, \quad n_r > 0\quad \text{ and } \quad
m_b+\sum_{i=1}^{h}\left|\beta_{i}\right|_b \leq n_r+\sum_{i=1}^{\ell}\left|\rho_{i}\right|_r-\max_{1\leq i\leq \ell}\big\{\left|\rho_{i}\right|_r\big\}
\end{equation}
when $\mathscr{B}$'s turn starts, then $\mathscr{R}$ wins by applying $\strat$ at every round.
\end{prop}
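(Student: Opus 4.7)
The plan is to show that regardless of $\mathscr{B}$'s play during their current round, after $\mathscr{R}$ responds by applying $\strat$, the resulting game state falls into a scenario already analyzed by Theorem~\ref{thm-type1} or Theorem~\ref{thm-gentype2}, with $\mathscr{R}$ in a winning position. The crucial structural observation is that Step~1 of $\strat$ captures every $r$-pile on the board and Step~3 creates at most one new long $r$-pile, so after $\mathscr{R}$'s round we always have $\ell'' \in \{0,1\}$. Moreover, $\mathscr{R}$ never adds to long $b$-piles and converts at most one (the largest) into a long $r$-pile. Hence if $h'' = 0$ the resulting board is of type~I, while if $h'' \geq 1$ the resulting board is of generalized type~II (with $\ell'' = 1$, since $\mathscr{R}$'s Step~3 necessarily places on the largest $b$-pile). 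No additional induction is required: both relevant theorems have already been fully established, so a single round of case analysis suffices.

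Following the style of Propositions~\ref{prop-gentype1-R} and~\ref{prop-gentype2-R2}, I would parameterize $\mathscr{B}$'s same-player moves by nonnegative counts $t, t', t''$ (placing $r$-chips on empty piles, $b$-singletons, and long $b$-piles respectively) together with $s, s'$ (placing $b$-chips on $b$-singletons or long $b$-piles, each triggering a capture in which $\mathscr{B}$ discards one chip, a guard or a prisoner). I would then split into the four round-ending cases of Theorem~\ref{thm:DifferentActive}: (1) $\mathscr{B}$ places a $b$ on an empty pile, (2) on an $r$-singleton, (3) on a long $r$-pile $\rho_j$, or (4) $\mathscr{B}$ places an $r$ on an $r$-pile. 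In each case I would compute the updated board state and chip counts, then simulate $\strat$ for $\mathscr{R}$, classify the resulting board as type~I or generalized type~II, and invoke the corresponding winning characterization. The case where $\mathscr{B}$ is eliminated during their round (no chips left and no donation) is an immediate win for $\mathscr{R}$.

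The main obstacle will be the chip bookkeeping: I must verify that for each case the target inequality, $m_b'' \leq n_r''$ for type~I or $m_b'' + \sum_{i} |\beta_i''|_b \leq n_r''$ for generalized type~II, follows from~\eqref{eq-general}. The crux is the role of the $\max_{1 \leq i \leq \ell}\{|\rho_i|_r\}$ term. When $\mathscr{B}$ ends their round by placing a $b$ on the largest long $r$-pile in case~(3), this is precisely the worst scenario the correction in~\eqref{eq-general} is designed to absorb: the chips in that pile are effectively lost to $\mathscr{R}$, since they are converted into a long $b$-pile rather than being acquired by $\mathscr{R}$ through a Step~1 capture. The accounting must simultaneously track (a) chips gained by $\mathscr{B}$ when capturing long $b$-piles in their round, (b) chips gained by $\mathscr{R}$ when capturing all remaining long $r$-piles via $\strat$, and (c) how the $\max$ term evolves into either the new largest $r$-pile (for the generalized type~II reduction) or disappears (for the type~I reduction). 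The argument parallels that of Proposition~\ref{prop-gentype2-R2}, with the added complication of several long $r$-piles being captured in bulk by $\mathscr{R}$.
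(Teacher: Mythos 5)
Your proposal follows essentially the same route as the paper's proof: a single-round (non-inductive) reduction, parameterizing $\mathscr{B}$'s same-player moves by $t,t',t'',s,s'$, splitting on the four round-ending moves of Theorem~\ref{thm:DifferentActive}, classifying the post-$\strat$ board as type~I or generalized type~II, and closing each case via Theorem~\ref{thm-type1} or Theorem~\ref{thm-gentype2}, with the $\max_{1\leq i\leq \ell}\{|\rho_i|_r\}$ term absorbing exactly the case where $\mathscr{B}$'s final chip lands on a long $r$-pile. The only cosmetic difference is that the paper dispatches the initial $h=0$ situation immediately via Theorem~\ref{thm-gentype1} rather than running it through the general case analysis.
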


\begin{proof}
Suppose $\mathbb{B}=(k_e,k_r,k_b,\ell,h)$ with $\ell,h\geq 0$. If $h=0$, then $\mathbb{B}$ is of generalized type I and~\eqref{eq-general} becomes
$$
m_b > 0, \quad n_r > 0\quad \text{ and } \quad
m_b \leq n_r+\sum_{i=1}^{\ell}\left|\rho_{i}\right|_r-\max_{1\leq i\leq \ell}\big\{\left|\rho_{i}\right|_r\big\}.
$$
Then, by Theorem~\ref{thm-gentype1}, $\mathscr{R}$ wins by applying $\strat$ at every round.

Now suppose $h \geq 1$ and $\mathscr{B}$ starts their round by making a sequence of $t+t'+t''+s+s'\geq 0$ moves, that consist of placing (refer to Theorem~\ref{thm:SameActive}):
$$
\left\{\begin{array}{l}
\text{$t$ $r$ chips on $t$ empty piles,} \\
\text{$t'$ $r$ chips on $t'$ $b$-singletons,} \\
\text{$t''$ $r$ chips on $t''$ long $b$-piles,} \\
\text{$s$ $b$ chips on $s$ $b$-singletons,} \\
\text{and $s'$ $b$ chips on $s'$ long $b$-piles.}
\end{array}\right.
$$
The state of the board $\mathbb{B}$ is now the following:
$$\mathbb{B}=(k_e-t+s+s',k_r+t,k_b-t'-s,\ell+t'+t'',h-t''-s'). $$
Let us denote the $s'$ long $b$-piles that were captured by $\beta_1,\ldots,\beta_{s'}$ and the remaining long $b$-piles on the board by $\beta_{s'+1},\ldots, \beta_{h-t''}$. The $t'+t''$ long $r$-piles that were created consist of $t'$ $(b,r)$-piles and $t''$ long $r$-piles $\langle\beta_{h-t''+1},(r)\rangle,\langle\beta_{h-t''+2},(r)\rangle,\ldots,\langle\beta_{h},(r)\rangle$. We rename the $t'+t''$ long $r$-piles by $\rho_{\ell+1},\rho_{\ell+2},\ldots,\rho_{\ell+t'+t''}$.

For each pile captured by $\mathscr{B}$, one chip must be discarded (by the Capture Rule). This chip can either be a guard or a prisoner.
Therefore, we have $\mathscr{B}=(m_b',m_r')$, where 
\begin{align*} 
m_b' &\leq m_b+\sum_{i=1}^{s'}|\beta_i|_b,\\
m_r'&\leq m_r-t-t'-t''+\sum_{i=1}^{s'}|\beta_i|_r,\\
m_b'+m_r' &\leq m_b+m_r-t-t'-t''+\sum_{i=1}^{s'}(|\beta_i|-1). 
\end{align*}
Player $\mathscr{B}$ can end their round in four different ways (refer to Theorem \ref{thm:DifferentActive}): (1) placing a $b$ chip on an empty pile, (2) placing a $b$ chip on an $r$-singleton, (3) placing a $b$ chip on a long $r$-pile, or (4) placing an $r$ chip on an $r$-pile. Observe that in all cases, \eqref{eq-general} implies that $n_r\geq 1$, i.e., $\mathscr{R}$ has at least one $r$ chip.
\begin{enumerate}
\item Suppose $\mathscr{B}$ places a $b$ chip on an empty pile. Then, a $b$-singleton is created and no long pile is created. 
The move goes to $\mathscr{R}$, who captures all $r$-piles and discards all their prisoners, as per steps 1 and 2 of $\strat$. Therefore, there are $h-t''-s'$ long piles, which are all long $b$-piles.
Then, as per Step 3 of $\strat$, $\mathscr{R}$ places an $r$ chip on the $b$-pile of greatest length. Thus, it is $\mathscr{B}$'s turn to play, and we have $\mathscr{B}=(m_b'',m_r'')$ and $\mathscr{R}=(0,n_r'')$, where
\begin{align}
\label{ineq.mbpp.5.3}
m_b'' &\leq m_b+\sum_{i=1}^{s'}|\beta_i|_b-1,\\
\label{ineq.mrpp.5.3}
m_r'' &\leq m_r-t-t'-t''+\sum_{i=1}^{s'}|\beta_i|_r,\\
\label{ineq.summbmrpp.5.3}
m_b''+m_r'' &\leq m_b+m_r-t-t'-t''+\sum_{i=1}^{s'}(|\beta_i|-1)-1,\\
\label{ineq.nrpp.5.3}
n_r'' &\geq n_r+\sum_{i=1}^{\ell+t'+t''}|\rho_i|_r -1.
\end{align}
The state of the board $\mathbb{B}$ depends on the value of $h-t''-s'$: either (a) $0\leq h-t''-s' \leq 1$ or (b) $h-t''-s' > 1$.
\begin{enumerate}
\item Suppose $0\leq h-t''-s'\leq 1$. If $h-t''-s' = 0$, then $\mathscr{R}$ ended their round by placing an $r$ chip on a $b$-singleton. If $h-t''-s' = 1$, then $\mathscr{R}$ ended their round by placing an $r$ chip on the only long $b$-pile that was on the board. In both cases, the board now contains only one long pile, which is a long $r$-pile. As such, $\mathbb{B}$ is of type I and it is $\mathscr{B}$'s turn to play. Observe that $m_b''<n_r''$. Indeed, we have
\begin{align*}
m_b'' \leq\: & m_b+\sum_{i=1}^{s'}|\beta_i|_b-1 &\text{by~\eqref{ineq.mbpp.5.3},}\\
\leq\: & m_b+\sum_{i=1}^{h-t''}\left|\beta_{i}\right|_b-1 &\text{since $0\leq h-t''-s'$,}\\
\leq\: & m_b+\sum_{i=1}^{h}\left|\beta_{i}\right|_b-1\\
\leq\: & n_r+\sum_{i=1}^{\ell}\left|\rho_{i}\right|_r-\max_{1\leq i\leq \ell}\big\{\left|\rho_{i}\right|_r\big\}-1 &\text{by~\eqref{eq-general},}\\
<\: & n_r+\sum_{i=1}^{\ell}|\rho_i|_r -1\\
\leq\: & n_r+\sum_{i=1}^{\ell+t'+t''}|\rho_i|_r -1\\
\leq\: & n_r'' & \text{by~\eqref{ineq.nrpp.5.3}.}
\end{align*}
Then, by Theorem~\ref{thm-type1}, $\mathscr{R}$ wins by applying $\strat$ at every round.

\item Suppose $h-t''-s' > 1$. Then $\mathscr{R}$ ended their round by placing an $r$ chip on the $b$-pile of greatest length, say $\beta_{h-t''}$ without loss of generality. Thus, the board contains one long $r$-pile $\langle\beta_{h-t''},(r)\rangle$ and $h-t''-s'-1 > 0$ long $b$-piles. As such, $\mathbb{B}$ is of generalized type II and it is $\mathscr{B}$'s turn to play. If $m_b'' = 0$, then by Theorem~\ref{thm-gentype2}, $\mathscr{R}$ wins by applying $\strat$ at every round. Otherwise, $m_b'' > 0$ and we have
\begin{align*}
m_b'' + \sum_{i=s'+1}^{h-t''-1} |\beta_i|_b \leq\: &  m_b+\sum_{i=1}^{s'}|\beta_i|_b-1 + \sum_{i=s'+1}^{h-t''-1} |\beta_i|_b & \text{by~\eqref{ineq.mbpp.5.3},}\\
=\: &  m_b+\sum_{i=1}^{h-t''-1}|\beta_i|_b-1 \\
<\: &  m_b+\sum_{i=1}^{h}|\beta_i|_b-1 \\
\leq\: & n_r+\sum_{i=1}^{\ell}\left|\rho_{i}\right|_r-\max_{1\leq i\leq \ell}\big\{\left|\rho_{i}\right|_r\big\}-1 &\text{by~\eqref{eq-general},}\\
<\: & n_r+\sum_{i=1}^{\ell}|\rho_i|_r -1\\
\leq\: & n_r+\sum_{i=1}^{\ell+t'+t''}|\rho_i|_r -1\\
\leq\: & n_r'' & \text{by~\eqref{ineq.nrpp.5.3}.}
\end{align*}
Then, by Theorem~\ref{thm-gentype2}, $\mathscr{R}$ wins by applying $\strat$ at every round.
\end{enumerate}

\item Suppose $\mathscr{B}$ places a $b$ chip on an $r$-singleton. Then, one long $b$-pile is created, namely $(r,b)$.
The move goes to $\mathscr{R}$, who captures all $r$-piles and discards all prisoners, as per steps 1 and 2 of $\strat$.
Therefore, there are $h - t'' - s'+1$ long piles, which are all long $b$-piles. Then, as per Step 3 of $\strat$, $\mathscr{R}$ places an $r$ chip on the $b$-pile of greatest length. Thus, it is $\mathscr{B}$'s turn to play, and we have
$\mathscr{B}=(m_b'',m_r'')$ and $\mathscr{R}=(0,n_r'')$, where~\eqref{ineq.mbpp.5.3},
\eqref{ineq.mrpp.5.3}, \eqref{ineq.summbmrpp.5.3} and~\eqref{ineq.nrpp.5.3} are satisfied.
%\begin{align*}
%m_b'' &\leq m_b+\sum_{i=1}^{s'}|\beta_i|_b-1, \\
%m_r'' &\leq m_r-t-t'-t''+\sum_{i=1}^{s'}|\beta_i|_r,\\
%m_b''+m_r'' &\leq m_b+m_r-t-t'-t''+\sum_{i=1}^{s'}(|\beta_i|-1)-1\\
%n_r'' &\geq n_r+\sum_{i=1}^{\ell+t'+t''}|\rho_i|_r -1.
%\end{align*}
The state of the board $\mathbb{B}$ depends on the value of $h-t''-s'+1$: either (a) $h-t''-s'+1 = 1$ or (b) $h-t''-s'+1 > 1$.
\begin{enumerate}
\item Suppose $h-t''-s'+1=1$. Then $\mathscr{R}$ ended their round by placing an $r$ chip on the only long $b$-pile that was on the board. As such, the board now contains only one long pile, which is a long $r$-pile. Thus, $\mathbb{B}$ is of type I and it is $\mathscr{B}$'s turn to play). Observe that $m_b''<n_r''$. The derivation for this inequality is the same as in Case 1(a). Thus, by Theorem~\ref{thm-type1}, $\mathscr{R}$ wins by applying $\strat$ at every round.

\item Suppose $h-t''-s'+1>1$. Then $\mathscr{R}$ ended their round by placing an $r$ chip on the $b$-pile of greatest length, say $\beta_{h-t''}$ without loss of generality. As such, the board now contains one long $r$-pile $\langle\beta_{h-t''},(r)\rangle$ and $h-t''-s' > 0$ long $b$-piles. Thus, $\mathbb{B}$ is of generalized type II and it is $\mathscr{B}$'s turn to play. If $m_b'' = 0$, then by Theorem~\ref{thm-gentype2}, $\mathscr{R}$ wins by applying $\strat$ at every round. Otherwise, $m_b'' > 0$ and we have
$$m_b'' + \sum_{i=s'+1}^{h-t''-1} |\beta_i|_b < n_r'' .$$
The derivation for this inequality is the same as in Case 1(b).
Then, by Theorem~\ref{thm-gentype2}, $\mathscr{R}$ wins by applying $\strat$ at every round.
\end{enumerate}

\item Suppose $\mathscr{B}$ places a $b$ chip on a long $r$-pile. Then, one long $b$-pile is created. Without loss of generality, let it be $\langle\rho_{\ell+t'+t''},(b)\rangle$.
The move goes to $\mathscr{R}$, who captures all $r$-piles and discards all their prisoners, as per steps 1 and 2 of $\strat$.
Therefore, there are $h-t''-s'+1$ long piles, which are all long $b$-piles.
Then, as per Step 3 of $\strat$, $\mathscr{R}$ places an $r$ chip on the $b$-pile of greatest length. Thus, it is $\mathscr{B}$'s turn to play, and we have $\mathscr{B}=(m_b'',m_r'')$ and $\mathscr{R}=(0,n_r'')$ where
\begin{align}
\label{ineq.mbpp.5.3.case3}
m_b'' &\leq m_b+\sum_{i=1}^{s'}|\beta_i|_b-1, \\
\label{ineq.mrpp.5.3.case3}
m_r'' &\leq m_r-t-t'-t''+\sum_{i=1}^{s'}|\beta_i|_r, \\
\label{ineq.summbmrpp.5.3.case3}
m_b''+m_r''&\leq m_b+m_r-t-t'-t''+\sum_{i=1}^{s'}(|\beta_i|-1)-1, \\
\label{ineq.nrpp.5.3.case3}
n_r'' &\geq n_r+\sum_{i=1}^{\ell+t'+t''-1}|\rho_i|_r -1.    
\end{align}
The state of the board $\mathbb{B}$ depends on the value of $h-t''-s'+1$: either (a) $h-t''-s'+1 = 1$ or (b) $h-t''-s'+1 > 1$.
\begin{enumerate}
\item Suppose $h-t''-s'+1=1$. Then $\mathscr{R}$ ended their round by placing an $r$ chip on the only long $b$-pile that was on the board. As such, the board now contains only one long pile, which is a long $r$-pile. Thus, $\mathbb{B}$ is of type I and it is $\mathscr{B}$'s turn to play. Observe that $m_b'' \leq n_r''$. Indeed, we have
\begin{align*}
m_b'' \leq\: & m_b+\sum_{i=1}^{s'}|\beta_i|_b-1 & \text{by~\eqref{ineq.mbpp.5.3.case3},}\\
\leq\: &m_b+\sum_{i=1}^{h-t''}\left|\beta_{i}\right|_b-1 &\text{since $0\leq h-t''-s'$,}\\
\leq\: & m_b+\sum_{i=1}^{h}\left|\beta_{i}\right|_b-1\\
\leq\: &n_r+\sum_{i=1}^{\ell}\left|\rho_{i}\right|_r-\max_{1\leq i\leq \ell}\big\{\left|\rho_{i}\right|_r\big\}-1 &\text{by~\eqref{eq-general},}\\
\leq\: &n_r+\sum_{i=1}^{\ell-1}\left|\rho_{i}\right|_r-1 &\text{since $|\rho_{\ell}|_r\leq \max_{1\leq i\leq \ell}\big\{\left|\rho_{i}\right|_r\big\}$,}\\
\leq\: &n_r+\sum_{i=1}^{\ell+t'+t''-1}|\rho_i|_r -1\\
\leq\: &n_r'' & \text{by~\eqref{ineq.nrpp.5.3.case3}.}
\end{align*}
Then, by Theorem~\ref{thm-type1}, $\mathscr{R}$ wins by applying $\strat$ at every round.

\item Suppose $h-t''-s'+1 > 1$. Then $\mathscr{R}$ ended their round by placing an $r$ chip on the $b$-pile $\pi_{\max}$ of greatest length, denoted by $\pi_{\max}$. Observe that 
$$\pi_{\max}\in\{\langle\rho_{\ell+t'+t''},(b)\rangle\}\cup\{\beta_i \mid s'+1\leq i \leq h-t'' \}.$$
Thus, the board contains one long $r$-pile $\langle\pi_{\max},(r)\rangle$ and $h-t''-s' > 0$ long $b$-piles. As such, $\mathbb{B}$ is of generalized type II and it is $\mathscr{B}$'s turn to play. If $m_b'' = 0$, then by Proposition~\ref{prop-gentype2-R1}, $\mathscr{R}$ wins by applying $\strat$ at every round. Otherwise, $m_b'' > 0$ and 
\begin{align*}
\: & m_b'' + \sum_{i=s'+1}^{h-t''}|\beta_i|_b + |\langle\rho_{\ell+t'+t''},(b)\rangle|_b - |\pi_{\max}|_b & \text{by~\eqref{ineq.mbpp.5.3.case3},}\\
\leq\: & m_b+\sum_{i=1}^{s'}|\beta_i|_b -1 + \sum_{i=s'+1}^{h-t''}|\beta_i|_b + |\langle\rho_{\ell+t'+t''},(b)\rangle|_b  - |\pi_{\max}|_b\\
=\: & m_b+\sum_{i=1}^{h-t''}|\beta_i|_b + |\langle\rho_{\ell+t'+t''},(b)\rangle|_b  - |\pi_{\max}|_b -1\\
\leq\: & m_b + \sum_{i=1}^{h}|\beta_i|_b -1 &\text{since $|\langle\rho_{\ell+t'+t''},(b)\rangle|_b  \leq |\pi_{\max}|_b$,}\\
\leq\: &n_r+\sum_{i=1}^{\ell}\left|\rho_{i}\right|_r-\max_{1\leq i\leq \ell}\big\{\left|\rho_{i}\right|_r\big\}-1 &\text{by~\eqref{eq-general},}\\
\leq\: &n_r+\sum_{i=1}^{\ell-1}\left|\rho_{i}\right|_r-1 &\text{since $|\rho_{\ell}|_r\leq \max_{1\leq i\leq \ell}\big\{\left|\rho_{i}\right|_r\big\}$,}\\
\leq\: &n_r+\sum_{i=1}^{\ell+t'+t''-1}|\rho_i|_r -1\\
\leq\: &n_r'' & \text{by~\eqref{ineq.nrpp.5.3.case3}.}
\end{align*}
Then, by Theorem~\ref{thm-gentype2}, $\mathscr{R}$ wins by applying $\strat$ at every round.
\end{enumerate}

\item Suppose $\mathscr{B}$ places an $r$ chip on an $r$-pile. This results in a capture for $\mathscr{R}$, from which $\mathscr{R}$ gains at least one $r$ chip. The move goes to $\mathscr{R}$, who captures all $r$-piles and discards all their prisoners, as per steps 1 and 2 of $\strat$. Therefore, there are $h-t''-s'$ long piles, which are all long $b$-piles. Then, as per Step 3 of $\strat$, $\mathscr{R}$ places an $r$ chip on the $b$-pile of greatest length. Thus, it is $\mathscr{B}$'s turn to play, and we have $\mathscr{B}=(m_b'',m_r'')$ and $\mathscr{R}=(0,n_r'')$ where
\begin{align} 
\label{ineq.mbpp.5.3.case4}
m_b'' &\leq m_b+\sum_{i=1}^{s'}|\beta_i|_b,\\
\label{ineq.mrpp.5.3.case4}
m_r'' &\leq m_r-t-t'-t''+\sum_{i=1}^{s'}\left|\beta_{i}\right|_r-1,\\
\label{ineq.summbmrpp.5.3.case4}
m_b''+m_r'' &\leq m_b+m_r-t-t'-t''+\sum_{i=1}^{s'}(|\beta_i|-1)-1,\\
\label{ineq.nrpp.5.3.case4}
n_r' &\geq n_r+\sum_{i=1}^{\ell+t'+t''}|\rho_i|_r.
\end{align}
The state of the board $\mathbb{B}$ depends on the value of $h-t''-s'$: either (a) $0\leq h-t''-s' \leq 1$ or (b) $h-t''-s' > 1$.
\begin{enumerate}
\item Suppose $0\leq h-t''-s'\leq 1$. If $h-t''-s' = 0$, then $\mathscr{R}$ ended their round by placing an $r$ chip either on a $b$-singleton if $ $ or on an empty pile of $ $, as per Step 3 of $\strat$. If $h-t''-s' = 1$, then $\mathscr{R}$ ended their round by placing an $r$ chip on the only long $b$-pile on the board, as per Step 3 of $\strat$. In all cases, the board now contains at most one long pile, which is a long $r$-pile if it exists. As such, $\mathbb{B}$ is of type I and it is $\mathscr{B}$’s turn to play. Observe that $m_b''< n_r''$. Indeed, we have
\begin{align*}
m_b'' \leq\: & m_b+\sum_{i=1}^{s'}|\beta_i|_b & \text{by~\eqref{ineq.mbpp.5.3.case4},}\\
\leq\: &m_b+\sum_{i=1}^{h-t''}\left|\beta_{i}\right|_b &\text{since $0\leq h-t''-s'$,}\\
\leq\: & m_b+\sum_{i=1}^{h}\left|\beta_{i}\right|_b\\
\leq\: &n_r+\sum_{i=1}^{\ell}\left|\rho_{i}\right|_r-\max_{1\leq i\leq \ell}\big\{\left|\rho_{i}\right|_r\big\} &\text{by~\eqref{eq-general},}\\
<\: &n_r+\sum_{i=1}^{\ell}\left|\rho_{i}\right|_r \\
\leq\: &n_r+\sum_{i=1}^{\ell+t'+t''}|\rho_i|_r \\
\leq\: &n_r'' & \text{by~\eqref{ineq.nrpp.5.3.case4}.}
\end{align*}
Then, by Theorem~\ref{thm-type1}, $\mathscr{R}$ wins by applying $\strat$ at every round.

\item Suppose $h-t''-s'>1$. Then $\mathscr{R}$ ended their round by placing an $r$ chip on the $b$-pile of greatest length, say $\beta_{h-t''}$ without loss of generality. Thus, the board contains one long $r$-pile $\langle\beta_{h-t''},(r)\rangle$ and $h-t''-s'-1 > 0$ long $b$-piles. As such, $\mathbb{B}$ is of generalized type II and it is $\mathscr{B}$'s turn to play. If $m_b'' = 0$, then by Theorem~\ref{thm-gentype2}, $\mathscr{R}$ wins by applying $\strat$ at every round. Otherwise, $m_b'' > 0$ and we have
\begin{align*}
m_b'' + \sum_{i=s'+1}^{h-t''-1} |\beta_i|_b \leq\: &  m_b+\sum_{i=1}^{s'}|\beta_i|_b + \sum_{i=s'+1}^{h-t''-1} |\beta_i|_b & \text{by~\eqref{ineq.mbpp.5.3.case4},}\\
=\: &  m_b+\sum_{i=1}^{h-t''-1}|\beta_i|_b \\
<\: &  m_b+\sum_{i=1}^{h}|\beta_i|_b \\
\leq\: &n_r+\sum_{i=1}^{\ell}\left|\rho_{i}\right|_r-\max_{1\leq i\leq \ell}\big\{\left|\rho_{i}\right|_r\big\} &\text{by~\eqref{eq-general},}\\
<\: &n_r+\sum_{i=1}^{\ell}|\rho_i|_r \\
\leq\: &n_r+\sum_{i=1}^{\ell+t'+t''}|\rho_i|_r \\
\leq\: &n_r'' & \text{by~\eqref{ineq.nrpp.5.3.case4}.}
\end{align*}
\end{enumerate}
Then, by Theorem~\ref{thm-gentype2}, $\mathscr{R}$ wins by applying $\strat$ at every round.

\end{enumerate}
\end{proof}

The following (final) theorem is a direct consequence of Propositions~\ref{prop-gen-1}, \ref{prop-gen-2} and~\ref{prop-gen-3}.
\begin{theorem}
Let $\mathbb{B}=(k_e,k_r,k_b,\ell,h)$, $\mathscr{B}=\left(m_b, m_r\right)$, $\mathscr{R}=\left(n_b, n_r\right)$, and assume that $\mathscr{B}$ is the active player. Then, $\mathscr{B}$ has a winning strategy if and only if 
$$m_b>0 \qquad \textrm{and} \qquad \left( n_r=0 \quad\textrm{ or }\quad m_b+\sum_{i=1}^{h}\left|\beta_{i}\right|_b>n_r+\sum_{i=1}^{\ell}\left|\rho_{i}\right|_r-\max_{1\leq i\leq \ell}\big\{\left|\rho_{i}\right|_r\big\}\right)$$
when $\mathscr{B}$'s turn starts. Whenever there is a winning strategy for $\mathscr{B}$ (respectively for $\mathscr{R}$), then $\strat$ is such a strategy.
\end{theorem}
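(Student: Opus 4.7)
The plan is to assemble the final theorem as a direct corollary of Propositions~\ref{prop-gen-1}, \ref{prop-gen-2}, and~\ref{prop-gen-3}, verifying that their hypotheses partition the space of game states and that their conclusions match the two directions of the biconditional. Since the strategy $\strat$ is the candidate winning strategy in all three propositions, the final claim about $\strat$ being a winning strategy (for whichever player has one) will come for free once the partition is established.

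First, for the ``if'' direction, I would invoke Proposition~\ref{prop-gen-1} verbatim: its hypothesis coincides exactly with the condition in the theorem statement, and its conclusion asserts that $\mathscr{B}$ wins by applying $\strat$ at every round. This simultaneously establishes sufficiency and that $\strat$ is a winning strategy for $\mathscr{B}$.

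For the ``only if'' direction, I would negate the winning condition and split into two cases. Since $m_b \geq 0$ always holds, the negation of ``$m_b>0$ and ($n_r=0$ or $m_b+\sum_{i=1}^{h}|\beta_i|_b > n_r + \sum_{i=1}^{\ell}|\rho_i|_r - \max_{1\leq i\leq \ell}\{|\rho_i|_r\}$)'' reduces to either (A) $m_b=0$, or (B) $m_b>0$ together with $n_r>0$ and $m_b+\sum_{i=1}^{h}|\beta_i|_b \leq n_r+\sum_{i=1}^{\ell}|\rho_i|_r-\max_{1\leq i\leq \ell}\{|\rho_i|_r\}$. Case (A) is precisely the hypothesis of Proposition~\ref{prop-gen-2}, and case (B) is precisely the hypothesis of Proposition~\ref{prop-gen-3}. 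Both propositions conclude that $\mathscr{R}$ wins by applying $\strat$, from which it follows that $\mathscr{B}$ cannot have a winning strategy (the game has a unique winner), and further that $\strat$ realizes a winning strategy for $\mathscr{R}$.

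There is no serious obstacle at this stage: the heavy lifting, including the inductive arguments on $\nu$ and $\mu$ and the careful case analyses, has already been completed in the proofs of the three propositions (especially Proposition~\ref{prop-gen-3}, where the reduction to boards of type I and generalized type II is delicate). The only task is a boolean bookkeeping argument confirming that cases (A), (B), and the positive case covered by Proposition~\ref{prop-gen-1} are mutually exclusive and jointly exhaustive, which is immediate by inspection. The statement about $\strat$ being a winning strategy whenever one exists follows directly because each of the three propositions explicitly identifies $\strat$ as the strategy used by the winning player.
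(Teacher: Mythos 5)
Your proposal is correct and matches the paper's approach exactly: the paper states the final theorem as a direct consequence of Propositions~\ref{prop-gen-1}, \ref{prop-gen-2}, and~\ref{prop-gen-3}, which is precisely the case partition you spell out. Your explicit verification that cases (A) and (B) exhaust the negation of the winning condition is the only bookkeeping the paper leaves implicit.
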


\section{Conclusion}

The mathematicians who developed So Long Sucker had a vision of creating a board game meant to highlight concepts of game theory such as individual and group behavior, power dynamics, and the psychology behind decision-making. Common themes seen among players include deception and betrayal. There are anecdotes of people playing SLS that leave you questioning the strength of your own relationships. Poudstone~\cite{Poundstone1992} comments on how spouses would leave in separate cabs after a night out playing SLS.

So Long Sucker stands out from other board games due to its simple presentation combined with crafty rules. When all four players are still playing in a game of SLS, it remains impossible to determine with certainty which player will win. However, with the results of this paper, an observer of a two-player two-color game can immediately determine which player has a winning strategy. 

Next, it is of interest to characterize first-player Blue's winning strategies in a game with two players, Blue and Red; and three colors, blue, red, and green. Once there are three colors in play, the chips of the eliminated player affect the game differently than the rest of the chips. For instance, let us suppose that Green is the eliminated player. Then, a pile captured with a green chip gets discarded; and, the next player chosen would be whoever gave Green their last move (see Next Player Rule). Thus, adding an eliminated player's chips to the two-player game significantly affects Blue's strategies. We conjecture that this situation can be studied in a manner similar to the two-color situation. From there, we would continue to study winning strategies in order to establish which player has a winning strategy when there are only two players left in the game and all four colors. By studying exhaustively the two-player case, this would allow the game to end as soon as a second player is eliminated. From then on, one would question studying the three-player case. However, given the versatility of the rules, studying the three-player case might require different tools than those seen in this paper.

Another point of interest is describing the Nash Equilibrium of a given SLS game. When there are only two players left in the game, the game pay-off comes down to win-lose. Thus, the losing player has no incentive to play any particular way, as they will lose regardless of how they play, and their pay-off does not change. If the chips had value, then the losing player would aim to lose as few chips as possible in order to minimize their loss.  

Game theory has long had applications in economics. Thus, if the game is approached from an economical perspective, it would be useful to use players' behavior in order to study their risk aversion. It is of interest to understand the players' attempt to reduce uncertainty and their perception of cost and benefit when accepting or rejecting a deal \cite{Samuelson16}.

So Long Sucker has a lot of potential in helping to understand social dynamics between players. For instance, it is useful for studying people's personality traits, such as their eagerness to lead, their bargaining skills, and their principles. As noted in Section~\ref{section.introduction}, this is something Hofstede and Murff touched on \cite{HofstedeTiptonMurff11}, as well as Blachard et al.~\cite{Hofstra14}.

\paragraph{Acknowledgments.}
The authors of this paper thank Dr. Pieter Hofstra for introducing them to So Long Sucker. Dr. Hofstra was a category theorist who dabbled in game theory during his free time. He was extremely interested in using So Long Sucker to study people's behavior \cite{Hofstra14}. Dr. Hofstra sadly passed away before the results came to fruition, and the authors would like to dedicate this paper to his name. 

\bibliographystyle{abbrv}
\bibliography{biblio}

\end{document}